\theoremstyle{definition}
\newtheorem{definition}{Definition}[section]
\newtheorem{theorem}[definition]{Theorem}
\newtheorem{proposition}[definition]{Proposition}
\newtheorem{lemma}[definition]{Lemma}
\newtheorem{corollary}[definition]{Corollary}
\theoremstyle{remark}
\newtheorem{remark}[definition]{Remark}
\newtheorem{example}[definition]{Example}
\newcommand{\R}{\ensuremath{\mathbb{R}}}
\newcommand{\N}{\ensuremath{\mathbb{N}}}
\newcommand{\hide}[1]{}
\title{Domination properties and extension of positive compact operators on pre-Riesz spaces}
\author[1]{Onno van Gaans\thanks{vangaans@math.leidenuniv.nl}}
\author[1]{Feng Zhang\thanks{zhangfeng.0631@163.com}}
\affil[1]{Mathematical Institute,
Leiden University,
P.O. Box 9512,
2300 RA Leiden,
The Netherlands}
\begin{document}

\maketitle

\abstract{This paper concerns positive domination property of compact operators on pre-Riesz spaces. The method is embedding the pre-Riesz space to the Riesz completion. It extends the order continuous norms in pre-Riesz spaces to  Riesz completions. The compactness of  third power of a positive operator is obtained in a pre-Riesz space which has an order unit.}

\textbf{Keywords:} Banach lattice; pre-Riesz space; order continuous norm; compact operator 

\textbf{AMS subject classification:} 46B40, 47B60

\section{Introduction}

A classical question of positive compact operators between Banach lattices is that domination property. Due to P.G. Dodds and D.H. Fremlin \cite{dodds1979compact}, it is established that for Banach lattices  $X$, $Y$ such that $X'$, $Y$ both have order continuous norms, if a positive operator $S\colon X\rightarrow Y$ is dominated by a compact operator $T\colon  X\rightarrow Y$, i.e. $0\le S\le T$, then $S$ is compact. This positive domination property is also proved by A.W. Wickstead \cite{wickstead1981extremal} in the situations that either $X'$ or $Y$ is atomic with an order continuous norm. 
Moreover, due to C.D. Aliprantis and O. Burkinshaw \cite{aliprantis1980positive}, if a positive operator on a Banach lattice is dominated by a compact operator, then its third power is a compact operator.
Domination properties of positive strictly singular operators on Banach lattices are also studied 
by J. Flores, F.L. Hernandez and P. Tradacete in \cite{flores2011domination}.

In this paper, we mainly consider the similar positive compact domination property in pre-Riesz spaces. Namely, we explore under which suitable conditions for pre-Riesz spaces $X$ and $Y$, we have that every positive operator dominated by a compact operator is compact.
We also address the question whether a similar result concerning the third power of the operator is true for operators on pre-Riesz spaces. 

To use the theory of pre-Riesz spaces and Riesz completions, one natural question is  how we extend an order continuous norm on a pre-Riesz space to  its Riesz completion. 
In Section \ref{ext-of-ocn}, we will settle this by providing the condition that the pre-Riesz space is pervasive  and Archimedean.

Section \ref{ext-comp-operator} is concerned with a unique extension of an operator defined on a pre-Riesz space to its Dedekind completion, based on norm denseness and by means of order continuous norms.

In Section \ref{Com-domi-inpreR}, we investigate how the positive domination property of compact operators introduced by P.G. Dodds and D.H. Fremlin in \cite{dodds1979compact}, and the theory of the third power compact operators by C.D. Aliprantis and O. Burkinshaw \cite{aliprantis1980positive} can be generalized to  pre-Riesz spaces.


\section{Preliminaries}\label{sec.preli}

Let $X$ be a real vector space, let $K\subset X$ be the positive cone ($x,y \in K$, $\lambda, \mu\in \mathbb{R^+}$ implies $\lambda x+\mu y \in K$, and $K\cap (-K)=\{0\}$). The partial order in $X$ is defined by $x\ge y$ if and only if $x-y\in K$.
For a finite subset $M$ of $X$ the set of all upper bounds of $M$ will denoted by $M^\mathrm{u}$.
By $X$ is \emph{Archimedean} we mean if for every $x,y \in X$ with $nx\le y$ for all $n\in\mathbb{N} $ one has $x\le 0$. $X$ is called \emph{directed} if for every $x,y\in X$, there exists  $z\in X$ such that $z\ge x$ and $z\ge y$. 
The space $(X,K)$ is directed if and only if the cone $K$ is \emph{generating}
in $X$, that is, $X = K-K$.
A linear subspace $D\subset X$ is called \emph{order dense} in $X$ if for every $x\in X$ we have $x=\inf \{y\in D\colon y\ge x\}$. 


Recall that partially ordered vector space $X$ is called a \emph{pre-Riesz space}
if for every $x,y,z\in X$ the inclusion $\{x+y,x+z\}^\mathrm{u}\subseteq\{y,z\}^\mathrm{u}$ implies 
$x\in K$.
A linear map $i\colon X\rightarrow Y$, where $X$ and $Y$ are partially ordered vector spaces, is called \emph{bipositive} if for every $x\in X$ one has $0\le x$
 if and only if $0\le i(x)$. Then one can always 
  embedding a pre-Riesz space as an order dense subspace of a vector lattice by the following theorem, which is due to \cite[Corollaries 4.9-11 and Theorems 3.5, 3.7, 4.13]{Haa1993}.

\begin{theorem}\label{embd-preR}
Let $X$ be a partially ordered vector space. The following statements are equivalent.
\begin{itemize}
\item[(i)] $X$ is a pre-Riesz space.
\item[(ii)] There exist a vector lattice $Y$ and a bipositive linear map $i\colon X\rightarrow Y$ such that $i(X)$ is order dense in $Y$.
\item[(iii)] There exist a vector lattice $Y$ and a bipositive linear map $i\colon X\rightarrow Y$ such that $i(X)$ is order dense in $Y$ and $i(X)$ generates $Y$ as a vector lattice, i.e. for every $y\in Y$ there are $a_1, \ldots, a_m, b_1,\ldots, b_n\in i(X)$ such that 
\[y=\bigvee_{j=1}^{m}a_j-\bigvee_{k=1}^{n}b_k.\]
\end{itemize}
\end{theorem}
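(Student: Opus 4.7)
The plan is to establish the cycle (iii) $\Rightarrow$ (ii) $\Rightarrow$ (i) $\Rightarrow$ (iii). The first implication is immediate: (iii) is just (ii) with an added generation property.

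For (ii) $\Rightarrow$ (i), I would fix $x, y, z \in X$ with $\{x+y, x+z\}^\mathrm{u} \subseteq \{y, z\}^\mathrm{u}$ and show that $i(x) \geq 0$ in $Y$, whence $x \in K$ by bipositivity. Set $v := i(x) + i(y) \vee i(z)$, which lies above both $i(x)+i(y)$ and $i(x)+i(z)$ in $Y$. For any $u \in i(X)$ with $u \geq v$, writing $u = i(x')$, bipositivity gives $x' \geq x+y$ and $x' \geq x+z$, so by hypothesis $x' \geq y$ and $x' \geq z$, and hence $u \geq i(y) \vee i(z)$. Order density of $i(X)$ in $Y$ then forces $v \geq i(y) \vee i(z)$, and cancelling $i(y) \vee i(z)$ yields $i(x) \geq 0$.

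For (i) $\Rightarrow$ (iii), I would follow van Haandel's construction. The idea is to realize $X$ inside a large vector lattice $V$ (for instance a function space on a separating set of positive functionals, or a suitable quotient of the free vector lattice over the vector space $X$) via a bipositive linear map $i$, and then let $Y$ be the vector sublattice of $V$ generated by $i(X)$. Bipositivity amounts to showing that $K$ is exactly the set of $x$ with $i(x) \geq 0$, which reduces to a separation argument: the positive functionals must distinguish $K$ from its complement, and this separation rests on the pre-Riesz characterization of $K$ in terms of upper bounds.

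The main obstacle is verifying that $i(X)$ is order dense in $Y$. Every $w \in Y$ has a representation of the form $w = \bigvee_j i(a_j) - \bigvee_k i(b_k)$, and one must check that $\{i(x) \in i(X) : i(x) \geq w\}$ has infimum $w$ in $Y$. This is where the pre-Riesz axiom does real work: the implication ``$\{x+y,x+z\}^\mathrm{u}\subseteq\{y,z\}^\mathrm{u}$ forces $x\ge 0$'' is exactly what is needed to certify, by induction on the depth of the lattice expressions, that the suprema and infima formally added in passing from $i(X)$ to $Y$ can always be pinned down as infima of upper bounds drawn from $i(X)$. Carrying out this induction cleanly, and ensuring that the quotient by the relations imposed on the free lattice does not collapse $i(X)$, is the technical heart of the argument.
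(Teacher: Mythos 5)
First, note that the paper does not prove this theorem at all: it is quoted verbatim from van Haandel's thesis, so there is no internal proof to compare against. Judged on its own merits, your proposal is only partially a proof. The implication (iii) $\Rightarrow$ (ii) is indeed trivial, and your argument for (ii) $\Rightarrow$ (i) is correct and complete: setting $v = i(x) + i(y)\vee i(z)$, observing that every $i(x')\ge v$ majorizes $i(x+y)$ and $i(x+z)$ and hence, via the hypothesis and bipositivity, majorizes $i(y)\vee i(z)$, and then invoking order density of $i(X)$ to conclude $v\ge i(y)\vee i(z)$ and cancel, is exactly the standard argument. This part I would accept as written.

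The genuine gap is (i) $\Rightarrow$ (iii), which is the entire content of the theorem and which you only sketch. Two problems. First, the concrete construction you float --- embedding $X$ into a function space over a separating set of positive functionals --- cannot work in the stated generality: a pre-Riesz space need not be Archimedean, and for non-Archimedean spaces the positive functionals do not determine the cone (the map $x\mapsto (f(x))_f$ fails to be bipositive), so this route silently adds an Archimedean hypothesis the theorem does not have. Second, for the construction that does work (van Haandel's: take formal finite suprema, i.e.\ nonempty finite subsets $A$ of $X$ preordered by $A\lesssim B$ iff $B^{\mathrm{u}}\subseteq A^{\mathrm{u}}$, quotient by the induced equivalence, and verify this yields a vector lattice into which $X$ embeds bipositively and order densely), you explicitly defer the verification --- that the quotient is a lattice, that the embedding is bipositive, that $i(X)$ is order dense --- calling it ``the technical heart of the argument'' without carrying it out. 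That verification is precisely where the pre-Riesz axiom is used (to show the preorder is antisymmetric after quotienting and that upper bounds from $i(X)$ pin down each formal supremum), and without it you have restated the theorem rather than proved it. As it stands, the proposal proves one of the three implications and gestures at the one that matters.
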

A pair $(Y, i)$ as in (ii) is called a \emph{vector lattice cover} of $X$, and all spaces $Y$ as in (ii) are isomorphically determined as vector lattices, so we will say 
$(Y,i)$ \emph{the Riesz completion} of $X$, denoted by $X^\rho$.

In a partially ordered vector space $X$, a net $(x_\alpha)_{\alpha\in I}\subset X$ is said to \emph{order converges}, in short \emph{o}-converges, to $x\in X$ if there exists a net $(y_\beta)_{\beta\in J}\subset X$ such that for every $\beta\in J$ there is $\alpha_0$ such that for every $\alpha\ge \alpha_0$ we have $\pm (x_\alpha-x)\le y_\beta\downarrow 0$. We denote this by $x_\alpha\xrightarrow{ \mathrm{o} }x$. 
A seminorm $p$ on $X$ is said to be \emph{order continuous} if $x_\alpha\xrightarrow{ \mathrm{o} }x$ implies $p(x_\alpha-x)\rightarrow 0$. A norm $\|\cdot\|$ on  $X$ is called \emph{semimonotone} if there exists $C\in\R^+$ such that for every $x,y\in X$ with $0\le x\le y$ one has $\|x\|\le C \|y\|$.

\begin{definition}
Let $X$ be a directed partially ordered vector space with a seminorm $p$, then we say that $p$ is \emph{regular} if for every $x\in X$,
\begin{equation}\label{regul-norm}
p(x)=\inf\{p(y)\colon\, y\in X, -y\le x\le y\}.
\end{equation}
\end{definition}

Let $(X,\tau)$ be a topological ordered vector space. We will use $X'$ to denote the  \emph{topological dual} of $(X,\tau)$, and  $X^\sim$ to denote the \emph{order dual} of $X$. The topological dual $X'$ of a locally convex-solid Riesz space $(X,\tau)$ is an ideal in its order dual $X^\sim$,  \cite[Theorem 3.49]{AliBur1985}.
Consider a dual system $\langle X, X'\rangle$, a locally convex topology $\tau$ on $X$ is said to be \emph{consistent} with the dual system if the topological dual of $(X, \tau)$ is precisely $X'$.

\begin{definition}
Let $X$ be a Riesz space, and let $X'$ be an ideal of $X^\sim$ separating the points of $X$. Then the pair $\langle X, X'\rangle$, under its natural duality $\langle x, x'\rangle:=x'(x)$, is said to be a \emph{Riesz dual system}.
\end{definition}

Recall that a subset $A$ in a topological vector space $(X,\tau)$ is called \emph{$\tau$-totally bounded}, if for every $\tau$-neighborhood $V$ of zero there is a finite subset $\Phi$ of $A$ such that $A\subseteq \bigcup_{x\in \Phi}(x+V)=\Phi+ V$.

For a Riesz space $X$ and its order dual $X^\sim$, the \emph{absolute weak topology} on $X$ is defined by a collection of seminorms $p_f$ via the formula
\[p_f(x)=|f|(|x|), \quad x\in X, f\in X^\sim,\]
and it is denoted by $|\sigma|(X, X^\sim)$. 
For a nonempty subset $A$ of $X^\sim$, the \emph{absolute weak topology generated by $A$} on $X$ is the locally convex solid topology on $X$ generated by the seminorms $p_f$ defined via the formula 
\[p_f(x)=|f|(|x|), \quad x\in X, \, f\in A.\]

Also, recall that for a subset $D$ of a topological vector space $(X, \tau)$, the \emph{restriction topology}, in short $r$-topology, on $D$ is such that $U\subseteq D$ is $r$-open if and only if there exists  $V\subseteq X$ which is $\tau$-open and $U=V\cap X$. 
For a net $(x_\alpha)_{\alpha\in I}$ in $D$ and $x\in D$, we then have $x_\alpha\xrightarrow{ r }x$ in $D$ if and only if $x_\alpha\xrightarrow{ \tau }x$ in $X$.

\begin{definition}
A Banach lattice $X$ is said to be
\begin{itemize}
\item[(1)] an \emph{AL-space} if $\|x+y\|=\|x\|+\|y\|$ for all $x,y\in X^+$ with $x\wedge y=0$ and 
\item[(2)] an \emph{AM-space} if $\|x\vee y\|=\max\{\|x\|, \|y\|\}$ for all $x,y\in X^+$ with $x\wedge y=0$.
\end{itemize}
\end{definition}

\section{Extension of order continuous norms}\label{ext-of-ocn}

On vector lattices, regular seminorms and Riesz seminorms coincide, this result is due to  \cite[Theorem 3.40]{Gaa1999}.
Moreover, due to \cite[Theorem 3.43 and Corollary 3.45]{Gaa1999}, one can extend the seminorm on a pre-Riesz space in the following way.
\begin{theorem}\label{norm-extension}
Let $(X, K)$ be a directed partially ordered vector space with a seminorm $p$. Let $Y$ be a directed partially ordered vector space and $i\colon X\to Y$ a bipositive linear map, such that $i(X)$ is majorizing in $Y$.
 Define 
\begin{eqnarray}\label{regul-ext}
p_r(y):=\inf\{p(x)\colon\, x\in X \ \mbox{such that} -i(x)\le y\le i(x)\}, \ y\in Y.
\end{eqnarray} 
The following statements hold.
\begin{itemize}
\item[(i)] $p_r$ is the greatest regular seminorm on $Y$ with $p_r\le p$ on $K$.
\item[(ii)] $p_r=p$ on $K$ if and only if $p$ is monotone. Moreover, if $p$ is monotone, then $p_r\ge \frac{1}{2}p$.
\item[(iii)] $p_r=p$ on $X$ if and only if $p$ is regular. 
\end{itemize}
\end{theorem}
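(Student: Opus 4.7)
The first step is to confirm that the infimum in (\ref{regul-ext}) is taken over a nonempty set. Since $i(X)$ is majorizing in $Y$ and $X$ is directed, for each $y \in Y$ one can find $x \in X$ with $\pm y \le i(x)$. Moreover, any such $x$ automatically lies in $K$: the relation $-i(x) \le i(x)$ forces $i(x) \ge 0$, and bipositivity then gives $x \ge 0$. Subadditivity and positive homogeneity of $p_r$ follow routinely by adding and rescaling the bounds $\pm i(x_j) \le y_j$, yielding a seminorm on $Y$.

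The substantive part of (i) is the regularity of $p_r$. For $y \in Y$ and any $z \in Y$ with $\pm y \le z$, whenever $x \in K$ satisfies $\pm z \le i(x)$ one has $-i(x) \le -z \le y \le z \le i(x)$, hence $p_r(y) \le p(x)$; taking the infimum first over $x$ and then over $z$ gives $p_r(y) \le \inf\{p_r(z) : \pm y \le z\}$. For the reverse inequality, given $x \in K$ with $\pm y \le i(x)$, set $z := i(x)$: then $\pm y \le z$, and since $x$ also witnesses $\pm i(x) \le i(x)$ in the defining infimum for $p_r(i(x))$, we obtain $p_r(z) \le p(x)$; taking infima yields $\inf\{p_r(z) : \pm y \le z\} \le p_r(y)$. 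This establishes regularity. The bound $p_r \circ i \le p$ on $K$ follows by using $x$ itself as a witness for $i(x)$, and maximality is proved by noting that a regular seminorm is automatically monotone: if $q$ is a regular seminorm on $Y$ with $q \circ i \le p$ on $K$, then $\pm y \le i(x)$ gives $q(y) \le q(i(x)) \le p(x)$, and the infimum yields $q \le p_r$.

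For (ii), the forward implication is immediate from the monotonicity of the regular seminorm $p_r$, transferred to $p$ via the assumed equality on $K$. For the converse, assuming $p$ monotone, any $x' \in K$ with $\pm i(x) \le i(x')$ satisfies $0 \le x \le x'$ in $X$, so $p(x) \le p(x')$; combined with $p_r \circ i \le p$ on $K$, this yields equality. The one-half estimate uses the identity $2x = (x+x') - (x'-x)$: for $x \in X$ and $x' \in K$ with $\pm x \le x'$ one has $0 \le x+x' \le 2x'$ and $0 \le x'-x \le 2x'$, so monotonicity gives $p(x) \le \tfrac{1}{2}(p(x+x')+p(x'-x)) \le 2\, p(x')$, and the infimum over $x'$ yields $p \le 2\, p_r \circ i$.

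Finally, (iii) is a direct reformulation. If $p$ is regular, then $p(x) = \inf\{p(x') : -x' \le x \le x'\}$ and every admissible $x'$ lies in $K$; by bipositivity this infimum coincides with $p_r(i(x))$. Conversely, $p_r \circ i = p$ manifestly exhibits $p$ in the regular form. I expect the delicate step to be the regularity of $p_r$ in part (i)---specifically verifying that the candidates $z = i(x)$ in the outer infimum admit $x$ as a witness for the inner one---while the one-half bound hinges on the small algebraic trick above; everything else is bookkeeping with the majorizing hypothesis and bipositivity.
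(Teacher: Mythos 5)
The paper does not prove this theorem; it is quoted from \cite[Theorem 3.43 and Corollary 3.45]{Gaa1999}, so there is no in-paper argument to compare against. Your proof is correct and is essentially the standard derivation: the witnesses are automatically in $K$, regularity of $p_r$ follows from the two-sided infimum swap with $z=i(x)$, maximality uses that regular seminorms satisfy $q(y)\le q(z)$ whenever $\pm y\le z$, and the $\tfrac12$-bound via $2x=(x+x')-(x'-x)$ together with the bipositivity-based identification $p_r(i(x))=\inf\{p(x'):\pm x\le x'\}$ (which also gives (iii) immediately) all check out.
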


Based on Zorn's lemma, it is shown that for a positive decreasing net in the Riesz completion $(X^\rho, i)$ of a pre-Riesz space $X$, if its infimum exists, then there exists a downward directed net in $i(X)^+$ such that their infimum are equal. This is due to \cite[Lemma 3.7.11]{anke-onno-book}.

\begin{lemma}\label{Lemma 3.7.11anke-onno-book}
Let $X$ be a pervasive Archimedean pre-Riesz space and $(Y, i)$ a vector lattice cover of $X$. Let $(y_\alpha)_{\alpha\in I}$ be a net in $Y$ such that $y_\alpha\downarrow 0$. There exists a net $(x_\beta)_{\beta\in J}$ in $X$ with $x_\beta\downarrow 0$ and such that for every $\beta\in J$ there exists $\alpha_0\in I$ such that for every $\alpha\ge \alpha_0$ we have $i(x_\beta)\ge y_\alpha$.
\end{lemma}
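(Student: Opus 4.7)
The plan is to realize the desired net by taking as its index set the pairs that directly encode the domination relation. Specifically, set
\[ J := \{(x, \alpha) \in X^+ \times I : i(x) \ge y_\alpha\},\]
ordered by $(x_1, \alpha_1) \preceq (x_2, \alpha_2)$ iff $x_1 \ge x_2$ in $X$ and $\alpha_1 \le \alpha_2$ in $I$, and define $x_{(x, \alpha)} := x$. Since $i(X)$ is order dense in $Y$ it is majorizing, so $J$ is nonempty. The ``eventually dominates'' condition is then built into the definition: for $\beta = (x, \alpha) \in J$ one can take $\alpha_0 := \alpha$, and then $i(x_\beta) = i(x) \ge y_\alpha \ge y_{\alpha'}$ for every $\alpha' \ge \alpha$.

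The first genuinely nontrivial step is upward directedness of $J$. Given $(x_1, \alpha_1), (x_2, \alpha_2) \in J$, choose $\alpha_3 \ge \alpha_1, \alpha_2$ in $I$; since $(y_\alpha)$ is decreasing, $y_{\alpha_3} \le i(x_1) \wedge i(x_2)$ in the lattice $Y$. One must produce $x_3 \in X^+$ with $y_{\alpha_3} \le i(x_3) \le i(x_1) \wedge i(x_2)$, for then by bipositivity $(x_3, \alpha_3)$ is a common successor. This step is where the pervasive hypothesis really enters: the relevant intermediate fact, which is the content of the Zorn-based argument foreshadowed in the excerpt, is that in a pervasive Archimedean pre-Riesz space the set $U_y := \{x \in X : i(x) \ge y\}$ is downward directed in $X$ for every $y \in Y$. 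Pervasiveness gives a positive $i(w) \in i(X)$ inside any positive gap of the form $i(x_1) \wedge i(x_2) - y_{\alpha_3}$, and iterating this via Zorn while using Archimedeanity of $Y$ to eliminate residual positive gaps produces the required $x_3$.

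It then remains to check $x_{(x, \alpha)} \downarrow 0$. By bipositivity it suffices to prove $\inf_Y \{i(x) : (x, \alpha) \in J\} = 0$. Let $w \in Y$ be a lower bound. Fix any $\alpha \in I$; then every $x \in U_{y_\alpha}$ yields a point $(x, \alpha) \in J$, so $w \le i(x)$ for all $x \in U_{y_\alpha}$. By the directedness just established and by order-denseness, $\inf_Y i(U_{y_\alpha}) = y_\alpha$, whence $w \le y_\alpha$. Since $\alpha$ was arbitrary and $y_\alpha \downarrow 0$, we conclude $w \le 0$. The main obstacle throughout is the directedness of $U_y$: pervasiveness in isolation supplies only one positive element of $i(X)$ below any positive $y \in Y$, and upgrading this to joint directedness of the full family of witnesses dominating $y$ is the delicate point where both Zorn's lemma and Archimedeanity are genuinely needed.
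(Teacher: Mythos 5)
The paper itself offers no proof of this lemma: it is quoted verbatim from Lemma 3.7.11 of Kalauch and van Gaans, \emph{Pre-Riesz Spaces}, so there is no in-paper argument to compare with, and your proposal must stand on its own. It does not: there is a genuine gap at exactly the point you yourself flag as delicate, namely the directedness of $J$. The intermediate fact you invoke --- that $U_y=\{x\in X: i(x)\ge y\}$ is downward directed for every $y\in Y$ when $X$ is pervasive and Archimedean --- is false. Take $X=C^1[0,1]$ with the pointwise order and $Y$ the Riesz subspace of $C[0,1]$ generated by $i(X)=X$; this $X$ is Archimedean and pervasive (every $0<y\in Y$ is continuous and bounded away from $0$ on some interval, so a smooth bump fits underneath), and $Y$ is its Riesz completion. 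Let $x_1(t)=1+t$, $x_2(t)=2-t$ and $y=x_1\wedge x_2\in Y$. Then $x_1,x_2\in U_y$, but any $x_3\in U_y$ with $x_3\le x_1,x_2$ would satisfy $y\le i(x_3)\le i(x_1)\wedge i(x_2)=y$, forcing $i(x_3)=y$, which is impossible since $y$ has a corner at $t=\tfrac12$. Pervasiveness places \emph{some} positive element of $i(X)$ below a positive element of $Y$; it does not place an element of $i(X)$ \emph{inside} the order interval $[y_{\alpha_3},\,i(x_1)\wedge i(x_2)]$, and no iteration via Zorn's lemma and Archimedeanity can do so, because the statement being aimed at is simply untrue.

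The defect is moreover in the construction, not merely in its justification: your index set $J$ genuinely fails to be directed. In the same example take $y_n:=\tfrac32\,(1-n|t-\tfrac12|)^+\in Y$ (a lattice combination of affine functions), so $y_n\downarrow 0$ and $y_n\le x_1\wedge x_2$ for all $n$, whence $(x_1,1),(x_2,1)\in J$. A common successor $(x_3,n_3)$ would need $x_3\le x_1\wedge x_2$, hence $x_3(\tfrac12)\le\tfrac32$, and $x_3\ge y_{n_3}$, hence $x_3(\tfrac12)\ge\tfrac32$; thus $x_3$ touches $x_1\wedge x_2$ from below at the corner, which forces $x_3'(\tfrac12)\ge 1$ and $x_3'(\tfrac12)\le -1$ simultaneously. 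Yet the lemma's conclusion does hold here: a suitable net consists of smooth bumps $b\ge 0$ with $b(\tfrac12)>\tfrac32$ and shrinking support, and these are \emph{not} minorants of $x_1\wedge x_2$. So a correct proof must select a more restrictive downward directed subfamily of eventual dominators (this is where Zorn's lemma is actually used in the cited source), rather than indexing over all pairs $(x,\alpha)$ with $i(x)\ge y_\alpha$. The remaining steps of your argument --- nonemptiness via majorization, the built-in eventual domination, and the computation $\inf_Y i(U_{y_\alpha})=y_\alpha$ yielding $\inf x_\beta=0$ --- are fine, but they all presuppose the directedness that fails.
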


This lemma will be used to prove the following theorem, which extends order continuous norms on pre-Riesz spaces.

\hide{

\begin{remark}\label{dow-dir-con}
Clearly,  
for $x=\bigvee_{k=1}^{n}i(a_{k})$ in $(X^\rho)^+$,  and $z\in i(X)$ with $z-x\in (X^\rho)^+$, by the proof of Theorem \ref{preR-netconv},
 one has a net $(u_\alpha)_{\alpha\in D}$ in $i(X)^+$ which is upward directed and convergent to $z-x$. Define $v_\alpha=z-u_\alpha$ for every $\alpha\in D$, then  the net $(v_\alpha)_{\alpha\in D}\subseteq i(X)$ is downward directed and convergent to $x$.

\end{remark}

}

\begin{theorem}\label{order-cont-norm}
Let $X$ be a pervasive Archimedean pre-Riesz space,   $p$ a semimonotone seminorm on $X$, and $(Y, i)$ a vector lattice cover of $X$. Let $p_r$ on $Y$ be defined by 
\[p_r(y):=\inf\{p(x)\colon\, x\in X \ \mbox{such that} -i(x)\le y\le i(x)\}, \, y\in Y .\]
If $p$ on $X$ is order continuous, then  $p_r$ is an order continuous seminorm on $Y$ as well.
\end{theorem}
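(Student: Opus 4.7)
The plan is to show $p_r$ is a Riesz seminorm on $Y$ (so order continuity reduces to the behavior on decreasing nets), and then to lift each decreasing net in $Y$ back into $X$ via Lemma \ref{Lemma 3.7.11anke-onno-book}, where the order continuity of $p$ can be applied.

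First, I would note that by Theorem \ref{norm-extension}(i), $p_r$ is a regular seminorm on $Y$, and since $Y$ is a vector lattice, regular seminorms and Riesz seminorms coincide by \cite[Theorem 3.40]{Gaa1999}. Hence $p_r$ is a Riesz seminorm on $Y$; in particular $p_r(y)=p_r(|y|)$ and $|y_1|\le|y_2|$ implies $p_r(y_1)\le p_r(y_2)$.

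Next, I reduce to decreasing nets. Suppose $(y_\alpha)\subseteq Y$ with $y_\alpha\xrightarrow{\mathrm o}0$; then there exists $(z_\beta)\subseteq Y$ with $z_\beta\downarrow 0$ and, for every $\beta$, eventually $|y_\alpha|\le z_\beta$, so by the Riesz property $p_r(y_\alpha)\le p_r(z_\beta)$ eventually. If one proves that $p_r(z_\beta)\to 0$ whenever $z_\beta\downarrow 0$, then given $\varepsilon>0$ one picks $\beta$ with $p_r(z_\beta)<\varepsilon$ and concludes $p_r(y_\alpha)<\varepsilon$ eventually, yielding order continuity. So it suffices to treat $z_\beta\downarrow 0$ in $Y$.

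Now fix such $(z_\beta)$. Since $X$ is pervasive and Archimedean, Lemma \ref{Lemma 3.7.11anke-onno-book} produces a net $(x_\gamma)_{\gamma\in J}$ in $X$ with $x_\gamma\downarrow 0$ such that for every $\gamma$ there is $\beta_0$ with $i(x_\gamma)\ge z_\beta$ for all $\beta\ge\beta_0$. Because $z_\beta\ge 0$ and $x_\gamma\ge 0$, we also have $-i(x_\gamma)\le 0\le z_\beta$, so $-i(x_\gamma)\le z_\beta\le i(x_\gamma)$ for all $\beta\ge\beta_0$, and the definition of $p_r$ yields $p_r(z_\beta)\le p(x_\gamma)$ for all such $\beta$. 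Since $x_\gamma\downarrow 0$ in $X$ implies $x_\gamma\xrightarrow{\mathrm o}0$ (take the dominating net to be $x_\gamma$ itself, using $0\le x_\alpha\le x_\gamma$ for $\alpha\ge\gamma$), the order continuity of $p$ gives $p(x_\gamma)\to 0$. Hence $\limsup_\beta p_r(z_\beta)\le p(x_\gamma)$ for every $\gamma$, and letting $\gamma$ vary we obtain $p_r(z_\beta)\to 0$, as required.

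The main obstacle is the transition from a net decreasing to $0$ in the lattice cover $Y$ to a cofinal decreasing approximation sitting inside $i(X)$: this is exactly Lemma \ref{Lemma 3.7.11anke-onno-book}, and it is where the pervasive and Archimedean hypotheses are used. Once that domination is available, the argument is just the combination of the Riesz property of $p_r$ (to reduce to positive decreasing nets), the defining formula for $p_r$ (to convert the bound $i(x_\gamma)\ge z_\beta$ into $p_r(z_\beta)\le p(x_\gamma)$), and the order continuity of $p$ on $X$ (to drive $p(x_\gamma)$ to $0$).
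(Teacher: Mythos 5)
Your proof is correct and follows essentially the same route as the paper's: reduce to a positive decreasing net via the regularity (Riesz seminorm) property of $p_r$, lift that net into $X$ with Lemma \ref{Lemma 3.7.11anke-onno-book}, and drive the estimate to zero with the order continuity of $p$. The only difference is that you obtain $p_r(z_\beta)\le p(x_\gamma)$ directly from the defining infimum (since $-i(x_\gamma)\le z_\beta\le i(x_\gamma)$), whereas the paper passes through $p_r(y_\beta)\le p_r(i(z_\gamma))\le Cp(z_\gamma)$ invoking semimonotonicity of $p$; your route is marginally cleaner and shows that the semimonotone hypothesis is not actually needed for this step.
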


\begin{proof}
By Theorem \ref{norm-extension} (i) it follows that $p_r$ is a seminorm on $Y$. We show that $p_r$ is order continuous.

Assume that $x_\alpha\xrightarrow{o}x$ in $Y$, i.e. there exists a net $(y_\beta)_{\beta\in B}\subset Y$ such that for every $\beta$ there is $\alpha_0$ such that for every $\alpha\ge \alpha_0$ we have  $\pm (x_\alpha -x)\le y_\beta\downarrow0$.
As by Theorem \ref{norm-extension} (i) the seminorm $p_r$ is regular we have $p_r(x_\alpha-x)\le p_r(y_\beta)$.
By Lemma \ref{Lemma 3.7.11anke-onno-book} there exists a net $(z_\gamma)_\gamma\subset X$ such that  for every $\gamma$ there is $\beta_0$ such that for every $\beta\ge \beta_0$ we have
 $i(z_\gamma)\ge y_\beta$ and $z_\gamma\downarrow 0$. Since $p$ is order continuous, $p(z_\gamma)\downarrow0$.
 Since $p$ is semimonotone, there is a constant $C$ such that $p_r(i(v))\le Cp(v)$ for every $0\le v\in X$. 
  Then $p_r(x_\alpha-x)\le p_r(y_\beta)\le p_r(i(z_\gamma))\le Cp(z_\gamma)\rightarrow0$. Hence $p_r$ is order continuous on $Y$.
\end{proof}

The following result due to J. van Waaij \cite[Theorem 4.15, Corollary 4.16]{van2013tensor}. It will be used to prove the extension result of order continuous norm on the Dedekind completion of pre-Riesz spaces.

\begin{proposition}\label{J. Waaij}
For an Archimedean pre-Riesz space $X$, let $(X^\rho, i)$ be the Riesz completion. The following are equivalent. 
\begin{itemize}
\item[(i)] $X$ is pervasive.
\item[(ii)] For all $0< y\in X^\rho$, it holds
$y= \sup \{x\in i(X)\colon \, 0< x\le y \}$.
\end{itemize}
\end{proposition}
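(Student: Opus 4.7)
The plan is to prove the two implications separately. The direction (ii) $\Rightarrow$ (i) is almost immediate: given $0 < y \in X^\rho$, condition (ii) expresses $y$ as the supremum of $\{x \in i(X) : 0 < x \le y\}$, and since $y$ is strictly positive this set must be nonempty. Hence there is $a \in X$ with $0 < i(a) \le y$, and bipositivity of $i$ forces $a > 0$, which is pervasiveness.

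For (i) $\Rightarrow$ (ii), fix $0 < y \in X^\rho$ and let $S^+ := \{x \in i(X) : 0 < x \le y\}$. Pervasiveness guarantees $S^+ \neq \emptyset$ (apply it to $y$), and $y$ is clearly an upper bound. The real task is to show that every upper bound $u \in X^\rho$ of $S^+$ satisfies $u \ge y$. Picking any $x_0 \in S^+$ yields $u \ge x_0 > 0$, so $u > 0$; moreover, replacing $u$ by $u \wedge y$ preserves the upper bound property (each $x \in S^+$ lies below both $u$ and $y$, hence below $u \wedge y$), so I may assume $0 < u \le y$. With this reduction, setting $w := y - u \ge 0$, the goal becomes $w = 0$.

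Suppose for contradiction that $w > 0$. Apply pervasiveness in $X^\rho$ to produce $a \in X$ with $0 < i(a) \le w$. The core of the argument is to show by induction that $n\, i(a) \le y$ for every $n \in \N$. The base case $n = 1$ follows from $i(a) \le w \le y$. For the inductive step, assume $n\, i(a) \le y$; since $n\, i(a) = i(na)$ is positive and lies in $i(X)$, it belongs to $S^+$, so $n\, i(a) \le u$. Then
\[
(n+1)\, i(a) = n\, i(a) + i(a) \le u + w = y,
\]
closing the induction. Because $X$ is Archimedean, so is its Riesz completion $X^\rho$; hence $n\, i(a) \le y$ for every $n \in \N$ forces $i(a) \le 0$, contradicting $i(a) > 0$. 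Therefore $w = 0$ and $u = y$, which proves (ii).

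The step I expect to be the most delicate is recognizing that the two bounds $i(a) \le u$ (because $i(a) \in S^+$) and $i(a) \le y - u$ (from pervasiveness applied to $w$) feed into a self-reinforcing Archimedean loop. The preliminary reduction to $u \le y$ via $u \wedge y$ is essential: without it, the inequality $i(a) \le y$ needed to place $i(a)$ in $S^+$ is not transparent, and the induction breaks down.
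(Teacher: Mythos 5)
Your proof is correct. The paper does not prove this proposition at all — it is quoted from J.\ van Waaij's thesis — so there is nothing to compare against line by line; your argument (reduce an arbitrary upper bound $u$ to $u\wedge y$, apply pervasiveness to $y-u$, and run the Archimedean induction $n\,i(a)\le y \Rightarrow n\,i(a)\le u \Rightarrow (n+1)\,i(a)\le y$) is precisely the standard proof of this fact, and all the steps, including the use of the Archimedean property of $X^\rho$ inherited from $X$, are sound.
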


\begin{corollary}\label{ore-con-nor}
Let $X$ be a pervasive Archimedean pre-Riesz space  with an order continuous semimonotone seminorm $\|\cdot\|_X$. Let $(X^\delta, j)$ be the Dedekind  completion of $X$, and define  $\|\cdot\|_{X^\delta}$ on $X^\delta$ by
\[\|y\|_{X^\delta}:=\inf\{\|x\|_X\colon\, x\in X \ \mbox{such that} -j(x)\le y\le j(x)\}.\]
Then $\left\|\cdot\right\|_{X^\delta}$ is an order continuous seminorm and $\left(j(X), \|\cdot\|_X\right)$ is norm dense in $(X^\delta, \|\cdot\|_{X^\delta})$. Furthermore, $\|\cdot\|_{X^\delta}$ extends $\|\cdot\|_X$ if (and only if) $\|\cdot\|_X$ is regular.
\end{corollary}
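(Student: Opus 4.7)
The plan is to split the conclusion into three assertions---order continuity, norm density, and the ``extends iff regular'' clause---and treat each using the machinery already set up.

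For order continuity, I would first recognize that $(X^\delta, j)$ is itself a vector lattice cover of $X$. Indeed, $j(X)$ is order dense in the Riesz completion $X^\rho$ by the definition of $X^\rho$, and $X^\rho$ is in turn order dense in its own Dedekind completion $X^\delta$ (the canonical embedding of a vector lattice into its Dedekind completion is order dense); since order density is transitive---for $y\in X^\delta$ the iterated infima $y = \inf\{z\in X^\rho : z\ge y\} = \inf\{j(x): j(x)\ge y\}$ telescope---$j(X)$ is order dense in $X^\delta$. Applying Theorem \ref{order-cont-norm} with $(Y, i) = (X^\delta, j)$ then yields immediately that $\|\cdot\|_{X^\delta}$ is an order continuous seminorm on $X^\delta$. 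The ``if and only if $\|\cdot\|_X$ is regular'' clause for the extension follows directly from Theorem \ref{norm-extension}(iii) applied to the same pair.

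For the norm density of $j(X)$ in $(X^\delta,\|\cdot\|_{X^\delta})$, take $y\in X^\delta$ and reduce to $y\ge 0$ via $y = y^+ - y^-$. Combining pervasiveness (Proposition \ref{J. Waaij}) with the order density of $X^\rho$ in $X^\delta$ gives the sup-characterization $y = \sup\{j(x) : x\in X,\ 0\le j(x) \le y\}$ in $X^\delta$. My plan is to build an increasing net $(a_\lambda)_\lambda \subseteq X^+$ with $j(a_\lambda)\uparrow y$ in $X^\delta$, obtained by a Zorn argument: take a maximal totally ordered family in $\{a\in X^+ : j(a)\le y\}$ under the order of $X$. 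If $s := \sup_\lambda j(a_\lambda) < y$, pervasiveness supplies $0 < j(x^*) \le y - s$, and extending the family by $a+x^*$ (for suitable $a$ in the family) contradicts maximality. Once $j(a_\lambda)\uparrow y$ is secured, $y - j(a_\lambda)\downarrow 0$ in $X^\delta$, and the order continuity of $\|\cdot\|_{X^\delta}$ established in the first step gives $\|y - j(a_\lambda)\|_{X^\delta} \to 0$.

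The main obstacle will be handling the Zorn construction at ``limit stages'', where the supremum of a climbing chain in $X^+$ may not be attained by a single element of $X$ itself. I plan to bypass this by controlling the norm estimate directly through Lemma \ref{Lemma 3.7.11anke-onno-book}: from the downward net $y - j(a_\lambda)\downarrow 0$ in $X^\delta$ the lemma produces $(z_\gamma)\subseteq X^+$ with $z_\gamma\downarrow 0$ in $X$ and $j(z_\gamma)\ge y - j(a_\lambda)$ eventually, whence
\[ \|y - j(a_\lambda)\|_{X^\delta} \le \|j(z_\gamma)\|_{X^\delta} \le \|z_\gamma\|_X \longrightarrow 0, \]
using $\|j(v)\|_{X^\delta} \le \|v\|_X$ for $v\in X^+$ from Theorem \ref{norm-extension}(i) together with the order continuity of $\|\cdot\|_X$ on $X$. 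This is exactly the schema of the proof of Theorem \ref{order-cont-norm}, redeployed now to upgrade the order-theoretic density of $j(X)$ in $X^\delta$ to $\|\cdot\|_{X^\delta}$-norm density.
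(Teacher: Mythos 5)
Your treatment of the order-continuity clause (recognize $(X^\delta, j)$ as a vector lattice cover of $X$ and apply Theorem \ref{order-cont-norm}) and of the ``extends iff regular'' clause (Theorem \ref{norm-extension}(iii)) coincides with the paper's. The norm-density part, however, takes a different route and contains a genuine gap, which you yourself flag but do not actually close. Your argument hinges on producing an \emph{increasing} net $(a_\lambda)\subseteq X^+$ with $j(a_\lambda)\uparrow y$; the sup-characterization $y=\sup\{j(x)\colon 0\le j(x)\le y\}$ alone does not yield such a net, because $X$ is only a pre-Riesz space and the set $\{x\in X^+\colon j(x)\le y\}$ need not be upward directed. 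Your Zorn argument on maximal chains breaks exactly where you say it does: if the maximal chain has no largest element, there is no single $a$ in the chain to which one can add $x^*$ so that $a+x^*$ dominates the whole chain, so maximality cannot be contradicted when $s<y$. The proposed ``bypass'' via Lemma \ref{Lemma 3.7.11anke-onno-book} does not repair this, since it is applied to the net $y-j(a_\lambda)\downarrow 0$ and therefore already presupposes the net whose existence is in question; it only replaces the final (unproblematic) step of converting $o$-convergence into norm convergence, a step that order continuity of $\|\cdot\|_{X^\delta}$ handles anyway.

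The paper circumvents this difficulty by interposing the Riesz completion. For $y\in(X^\delta)^+$ it works with $D=\{z\in X^\rho\colon 0\le z\le y\}$, which \emph{is} upward directed because $X^\rho$ is a lattice ($z_1\vee z_2\in D$), and has $\sup D=y$ by order density of $X^\rho$ in $X^\delta$; the resulting net $o$-converges to $y$, so $X^\rho$ is norm dense in $X^\delta$ by order continuity of $\|\cdot\|_{X^\delta}$. The passage from $i(X)$ to $X^\rho$ is then handled separately, using Lemma \ref{Lemma 3.7.11anke-onno-book} and the order continuity of $\|\cdot\|_{X^\rho}$ from Theorem \ref{order-cont-norm}. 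If you want to keep your one-step approach, you would need the upward analogue of Lemma \ref{Lemma 3.7.11anke-onno-book} for pervasive Archimedean pre-Riesz spaces as a citable result, rather than the maximal-chain argument as sketched.
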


\begin{proof}
The order continuity of $\|\cdot\|_{X^\delta}$ is same with  Theorem \ref{order-cont-norm}.
It remains to show that $(j(X), \|\cdot\|_X)$ is norm dense in $(X^\delta, \|\cdot\|_{X^\delta})$. Let $(X^\rho, i)$ be the Riesz completion of  $X$. For every $y\in X^\rho$, according to Lemma \ref{Lemma 3.7.11anke-onno-book},  there is a net $(x_\alpha)$ in $i(X)$ with $x_\alpha\xrightarrow{o}y$. By Theorem \ref{order-cont-norm}, $\|\cdot\|_{X^\rho}$ is order continuous, and then $\|x_\alpha-y\|_{X^\rho}\rightarrow0$, so that $i(X)$ is norm dense in $X^\rho$. Let $y\in (X^\delta)^+$, and $D:=\{x\in X^\rho\colon \, 0\le x\le y\}$. Since $X^\rho$ is pervasive in $X^\delta$ and $X^\rho$ is Archimedean, hence, by Proposition \ref{J. Waaij}, we have $D$ is upward directed and $\sup D= y$. Define the net $(z_\mu)_\mu$ by 
 $z_\mu:=\mu, \, \mu\in D$. Therefore $z_\mu\xrightarrow{o}y$ and $\|z_\mu-y\|_{X^\delta}\rightarrow0$. Thus we conclude  $X^\rho$ is norm dense in $X^\delta$. It follows that $j(X)$ is norm dense in $X^\delta$. 

Moreover, by Theorem \ref{norm-extension} (iii) the seminorm $\|\cdot\|_{X^\delta}$ extends $\|\cdot\|_X$ if (and only if) $\|\cdot\|_X$ is regular.
\end{proof}

\section{Extension of compact operators}\label{ext-comp-operator}

In this section, we will show that a compact operator on a pre-Riesz space $X$ with a suitable order continuous norm can be extended to a compact operator on the Dedekind completion $X^\delta$ of $X$. 
To make sure that the norm on $X^\delta$ indeed is an extension of the order continuous norm on $X$, the norm of $X$ is required to be regular. Then we will use Corollary \ref{ore-con-nor} to extend the operator, see the following theorem. 


\begin{theorem}\label{compact-ext}
Let $(X, \|\cdot\|_X)$ be a pervasive Archimedean pre-Riesz space  equipped with an order continuous regular seminorm. Let $(X^\delta, i)$ be the Dedekind completion of $X$.
Let $Y$ be a  Banach lattice with an order continuous norm. 
 If $T$ is a bounded operator in $L(X,Y)$, then there exists a unique bounded linear extension $\widehat{T}\in L(X^\delta,Y)$. If $T$ is compact, then $\widehat{T}$ is compact as well.
\end{theorem}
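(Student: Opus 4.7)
The plan is to apply Corollary \ref{ore-con-nor} to produce the extended seminorm $\|\cdot\|_{X^\delta}$ on $X^\delta$. Because $\|\cdot\|_X$ is assumed regular, the last assertion of that corollary gives that $\|\cdot\|_{X^\delta}$ genuinely extends $\|\cdot\|_X$ through $i$; it is also order continuous, and $i(X)$ is $\|\cdot\|_{X^\delta}$-dense in $X^\delta$. These three properties are exactly what the following extension and compactness arguments rely on.

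Given this, I would build $\widehat{T}$ by the standard density-plus-completeness recipe. For $y\in X^\delta$, choose a sequence $(x_n)\subset X$ with $i(x_n)\to y$ in $\|\cdot\|_{X^\delta}$; since $\|i(x_n)-i(x_m)\|_{X^\delta}=\|x_n-x_m\|_X$, the sequence $(x_n)$ is $\|\cdot\|_X$-Cauchy, hence $(Tx_n)$ is Cauchy in the Banach space $Y$ (with rate $\|T\|$) and converges to some element, which I denote $\widehat{T}(y)$. Routine checks then show well-definedness (independence of the approximating sequence), linearity, boundedness with $\|\widehat{T}\|=\|T\|$, and uniqueness, the last by continuity of any bounded extension together with the density of $i(X)$.

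For the compactness assertion, the target is to show that $\widehat{T}(B_{X^\delta})$ is relatively compact in $Y$, where $B_{X^\delta}$ denotes the $\|\cdot\|_{X^\delta}$-unit ball. Given $y\in X^\delta$ with $\|y\|_{X^\delta}\le 1$, take an approximating sequence $i(x_n)\to y$; then $\|i(x_n)\|_{X^\delta}\le 2$ eventually, and since $\|\cdot\|_{X^\delta}$ extends $\|\cdot\|_X$ this forces $\|x_n\|_X\le 2$ eventually. Consequently $\widehat{T}(y)=\lim_n T(x_n)$ lies in the norm closure $\overline{T(2B_X)}$, so $\widehat{T}(B_{X^\delta})\subseteq \overline{T(2B_X)}$. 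Compactness of $T$ makes $T(2B_X)$ relatively compact in $Y$, so $\overline{T(2B_X)}$ is compact, and hence $\widehat{T}(B_{X^\delta})$ is relatively compact.

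The only delicate point, rather than any deep obstacle, is the uniformly bounded approximation used in the compactness step: this is exactly where the regularity hypothesis on $\|\cdot\|_X$ is indispensable, for only under regularity does Corollary \ref{ore-con-nor} yield $\|i(x)\|_{X^\delta}=\|x\|_X$, which is what permits uniform control of $\|x_n\|_X$ from norm convergence $i(x_n)\to y$ in $X^\delta$.
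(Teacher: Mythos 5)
Your proposal is correct and follows essentially the same route as the paper: both extend $T$ by the norm denseness of $i(X)$ in $X^\delta$ from Corollary \ref{ore-con-nor}, using that regularity of $\|\cdot\|_X$ makes the embedding isometric, and both obtain compactness of $\widehat{T}$ by approximating norm-bounded elements of $X^\delta$ by uniformly norm-bounded elements of $X$ and invoking compactness of $T$. The only cosmetic difference is that you package the final step as the inclusion of $\widehat{T}$ applied to the unit ball of $X^\delta$ into the compact set $\overline{T(2B_X)}$ (where $B_X$ is the unit ball of $X$), whereas the paper extracts a convergent subsequence; the underlying estimate is identical.
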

\begin{proof}
Recall that by  Theorem \ref{norm-extension} (i) there exists the greatest regular seminorm $\|\cdot\|_{X^\delta}$ on $X^\delta$ which extends $\|\cdot\|_X$. 
Due to Corollary \ref{ore-con-nor}, $i(X)$ is dense with respect to the seminorm in $X^\delta$. So  there exists $\widehat{T}\in L(X^\delta,Y)$ which uniquely extends $T$ by means of  for $z\in X^\delta$, $\widehat{T}z=\lim \widehat{T}(i(z_n))=\lim T(z_n)$, where $i(z_n)\subseteq i(X)$ norm converges to $z$.

Let $(x_n)_n$ be a norm bounded sequence in $X^\delta$. It follows from the norm denseness of $i(X)$ in $X^\delta$ that there exists a norm bounded sequence $(y_n)_n$ in $X$ such that 
$\|i(y_n)-x_n\|_{X^\delta}<\frac{1}{n}$ holds for all $n\in \N$. 
As $T$ is compact, $(Ty_n)_n$ has a convergent subsequence $(Ty_{n_k})_{n_k}$.
So there exists $y\in Y$ such that for $k\rightarrow \infty$ we have 
\[\|Ty_{n_k}-y\|_Y \rightarrow 0.\]
For the subsequence $\left(\widehat{T}x_{n_k}\right)$ of $\left(\widehat{T}x_n\right)$, one has
\begin{align*}
\left\|\widehat{T}x_{n_k}-Ty_{n_k}\right\|_{Y}
&=\left\|\widehat{T}x_{n_k}-\widehat{T}(i(y_{n_k}))\right\|_Y\\
&\leq\left\|\widehat{T}\right\|\left\|x_{n_k}-i(y_{n_k})\right\|_{X^\delta}\\
&\leq\left\|\widehat{T}\right\|\frac{1}{n_k}\rightarrow 0
\end{align*}
as  $ k\rightarrow \infty$. Then
\[
\left\|\widehat{T}x_{n_k}-y\right\|_{Y}\leq\left\|\widehat{T}x_{n_k}-Ty_{n_k}\right\|_{Y}+\left\|Ty_{n_k}-y\right\|_Y\rightarrow 0\]
as  $ k\rightarrow \infty$. So $\left(\widehat{T}x_n\right)$ has a convergent subsequence, and hence $\widehat{T}$ is compact.
\end{proof}

\section{Compact domination results in pre-Riesz spaces}\label{Com-domi-inpreR}

In this section, we will study two results concerning the  domination property of positive compact operators on the setting of pre-Riesz spaces. We will consider appropriate norms and use  Riesz completions or Dedekind completions of  pre-Riesz spaces.

The classical domination property of compact operators between Banach lattices 
by \cite[Theorem 5.20]{AliBur1985}, which  reads as follows.

\begin{theorem}(Dodds-Fremlin )\label{Theorem 5.20AliBur1985}
Let $X$ be a Banach lattice, $Y$ a Banach lattice with $X'$ and $Y$ having order continuous norms. If a positive operator $S\colon  X\rightarrow Y$ is dominated by a compact operator, then $S$ is  a compact operator.
\end{theorem}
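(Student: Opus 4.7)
The plan is to show that $S(B_X)$ is totally bounded in $Y$, where $B_X$ denotes the closed unit ball of $X$, by combining an almost-order-bounded approximation (which uses order continuity of $\|\cdot\|_Y$) with a duality / disjoint-sequence argument (which uses order continuity of $\|\cdot\|_{X'}$). The starting observation is the pointwise inequality $|Sx|\le S|x|\le T|x|$, valid for every $x\in X$ by positivity of $S$ and $0\le S\le T$, coupled with $x\in B_X\Rightarrow|x|\in B_X^+$. Thus $S(B_X)$ is dominated in modulus by the relatively compact set $T(B_X^+)\subseteq T(B_X)$.

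First, because $T$ is compact and $Y$ has an order continuous norm, a standard lemma gives that $T(B_X^+)$ is almost order bounded: for every $\varepsilon>0$ there exists $u_\varepsilon\in Y^+$ such that $\bigl\|(T|x|-u_\varepsilon)^+\bigr\|_Y<\varepsilon$ uniformly in $x\in B_X$. Since $a\le b$ implies $a^+\le b^+$, one has $(|Sx|-u_\varepsilon)^+\le(T|x|-u_\varepsilon)^+$, and the control transfers to $S$, yielding $S(B_X)\subseteq[-u_\varepsilon,u_\varepsilon]+\varepsilon B_Y$. It therefore suffices to prove that the truncated family $A_\varepsilon:=\{(Sx\vee(-u_\varepsilon))\wedge u_\varepsilon:x\in B_X\}\subseteq[-u_\varepsilon,u_\varepsilon]$ is relatively compact; sending $\varepsilon\downarrow0$ then yields total boundedness of $S(B_X)$, hence compactness of $S$.

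Relative compactness of $A_\varepsilon$ is the hard step, and it is where the hypothesis on $X'$ enters. The plan is to invoke the disjoint-sequence criterion for compactness in a Banach lattice with order continuous norm: a norm bounded $A\subseteq Y$ is relatively compact iff for every disjoint sequence $(y'_n)\subseteq Y'_+$ with $\|y'_n\|\le1$ one has $\sup_{a\in A}|y'_n(a)|\to0$. Writing $y'_n(Sx)=(S'y'_n)(x)$ with $0\le S'y'_n\le T'y'_n$, compactness of $T$ together with order continuity of $\|\cdot\|_Y$ will force $(T'y'_n)$ to be weakly null in $X'$; order continuity of $\|\cdot\|_{X'}$ will then promote this to $\|T'y'_n\|_{X'}\to0$, so $\|S'y'_n\|_{X'}\to0$, giving the required uniform smallness.

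The main obstacle is the orchestration of this duality: verifying that weak nullity of $(T'y'_n)$ in $X'$ genuinely follows from compactness of $T$ plus order continuity of $\|\cdot\|_Y$, and that the disjoint-sequence criterion applies to the truncation $A_\varepsilon$ (not merely to $S(B_X)$ itself). Both steps require careful bookkeeping with bands and components in $Y'$, and the interaction between the lattice truncation $(\cdot\vee(-u_\varepsilon))\wedge u_\varepsilon$ and the pairing with $(y'_n)$ must be controlled uniformly in $x\in B_X$. Every other ingredient is standard Banach lattice machinery.
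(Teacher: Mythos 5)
The paper does not actually prove this statement: it is quoted as the classical Dodds--Fremlin theorem and referred to \cite[Theorem 5.20]{AliBur1985} (the subsequent Remark only observes that that proof never uses norm completeness of $X$). So the comparison can only be with the standard Aliprantis--Burkinshaw argument. Your first step agrees with it: $T(B_X^+)$ is norm totally bounded, hence almost order bounded, and the estimate $(|Sx|-u_\eps)^+\le(T|x|-u_\eps)^+$ reduces everything to showing that the order-bounded truncated set $A_\eps\subseteq[-u_\eps,u_\eps]$ is norm totally bounded. That part is sound.

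The gap is in the second step. The ``disjoint-sequence criterion for compactness'' you invoke is false: the condition that $\sup_{a\in A}|y_n'(a)|\to 0$ for every norm-bounded disjoint sequence $(y_n')\subseteq (Y')^+$ characterizes L-weakly compact (hence relatively \emph{weakly} compact) sets, not relatively norm compact ones. Concretely, in $Y=L^2[0,1]$ the set $A=\{(1+r_n)/2\colon n\in\N\}\subseteq[0,\mathbf{1}]$ (with $r_n$ the Rademacher functions) satisfies your criterion --- for disjoint $y_n'\ge 0$ with $\|y_n'\|_2\le 1$ one has $\sup_{a\in A}y_n'(a)\le\|y_n'\|_1\to 0$ since the supports have measures summing to at most $1$ --- yet $A$ has no norm-convergent subsequence. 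A second, independent problem is the claimed promotion ``$(T'y_n')$ weakly null in $X'$ plus order continuity of $\|\cdot\|_{X'}$ implies $\|T'y_n'\|_{X'}\to 0$'': weak nullity never implies norm nullity from order continuity alone (the unit vector basis of $\ell^2$ is the standard counterexample). The correct machinery at this point is the one the present paper records elsewhere for its own purposes: the Dodds--Fremlin band theorem (Theorem \ref{Dodds-Fremlin}) applied to the adjoints $0\le S'\le T'$ to get that $S'$ maps order intervals of $Y'$ to norm totally bounded sets of $X'$ (this is where order continuity of $\|\cdot\|_{X'}$ enters), Grothendieck's duality theorem to transfer this back to $|\sigma|(Y,Y')$-total boundedness of $S(B_X^+)$, and finally the fact that in a Banach lattice with order continuous norm the norm topology and $|\sigma|(Y,Y')$ agree on order intervals, which upgrades the truncated set to a norm totally bounded one. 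Your outline would need to be rebuilt around these three ingredients; as written, the central compactness criterion does not hold.
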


\begin{remark}
This result is proved originally by P.G. Dodds and D.H. Fremlin in \cite{dodds1979compact}. An easier accessible proof is given in 
\cite[Theorem 5.20]{AliBur1985}. In fact, note that in the proof of \cite[Theorem 5.20]{AliBur1985}, it does not use the norm completeness of $X$. As a consequence, we have the following corollary.
\end{remark}

\begin{corollary}\label{dom-com-Rie}
Let $X$ be a normed Riesz space, $Y$ a Banach lattice with $X'$ and $Y$ having order continuous norms. If a positive operator $S\colon  X\rightarrow Y$ is dominated by a compact operator, then $S$ is  a compact operator.
\end{corollary}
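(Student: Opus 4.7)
The plan is to reduce the claim to the classical Dodds--Fremlin Theorem \ref{Theorem 5.20AliBur1985} by passing to the norm completion of $X$. Since $X$ is a normed Riesz space, its norm completion $\widetilde X$ is a Banach lattice: the lattice operations on $X$ are uniformly continuous with respect to the Riesz norm, so they extend continuously to $\widetilde X$, and the positive cone of $X$ has closure in $\widetilde X$ which is a generating, normal cone. The canonical isometric isomorphism $\widetilde X' \cong X'$ (extension of bounded linear functionals to the completion) shows that $\widetilde X'$ has order continuous norm as soon as $X'$ does. This will put us in the situation where Theorem \ref{Theorem 5.20AliBur1985} directly applies.

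Next I would extend the operators. Let $T\colon X\to Y$ be a compact operator with $0\le S\le T$. Since $Y$ is a Banach lattice its norm is monotone, and for $x\in X$ one has $|Sx|\le S|x|\le T|x|$, hence $\|Sx\|_Y\le \|T\|\,\|x\|_X$; so $S$ is bounded. Because $Y$ is complete, both $S$ and $T$ admit unique bounded linear extensions $\widetilde S,\widetilde T\colon \widetilde X\to Y$, and $\widetilde T$ is compact (the image under $\widetilde T$ of the closed unit ball of $\widetilde X$ lies in the closure of $T(B_X)$, which is totally bounded). To transport the domination, take $x\in \widetilde X^+$ and a sequence $x_n\to x$ in $X$; replacing $x_n$ by $x_n^+$ (which still converges to $x^+=x$ by continuity of $\vee$) we may assume $x_n\in X^+$. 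Then $0\le Sx_n\le Tx_n$ in $Y$, and passing to the limit using closedness of the positive cone of $Y$ gives $0\le \widetilde S x\le \widetilde T x$. Hence $0\le \widetilde S\le \widetilde T$ on $\widetilde X$.

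Now Theorem \ref{Theorem 5.20AliBur1985} applied to the Banach lattices $\widetilde X, Y$ (whose duals have order continuous norms) yields that $\widetilde S$ is compact, and since $S$ is the restriction of $\widetilde S$ to $X\subseteq \widetilde X$, $S$ is compact as well.

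The only genuinely delicate point is ensuring that the completion $\widetilde X$ is really a Banach lattice and that $\widetilde X'$ inherits the order continuity of the norm from $X'$; once these standard facts are in hand, the rest is routine extension-by-continuity together with an invocation of the classical Dodds--Fremlin theorem. This completion-based route makes precise the remark that norm completeness of $X$ plays no essential role in the original proof.
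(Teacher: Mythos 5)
Your argument is correct, but it takes a genuinely different route from the paper. The paper does not really prove the corollary at all: it simply remarks that the proof of \cite[Theorem 5.20]{AliBur1985} nowhere uses norm completeness of the domain, so the same argument goes through verbatim for a normed Riesz space $X$. You instead reduce to the Banach-lattice statement by passing to the norm completion $\widetilde X$, extending $S$ and $T$ by continuity, transporting the domination $0\le \widetilde S\le \widetilde T$ via density of $X^+$ in $\widetilde X^+$ and closedness of $Y^+$, and then invoking Theorem \ref{Theorem 5.20AliBur1985} as a black box. All the steps you flag as delicate are indeed the standard facts one needs and they do hold: the completion of a normed Riesz space is a Banach lattice (uniform continuity of the lattice operations for a Riesz norm), the restriction map $\widetilde X'\to X'$ is an isometric \emph{bipositive} bijection (since $\widetilde X^+=\overline{X^+}$), hence a lattice isomorphism, so order continuity of the dual norm transfers. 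Your approach buys modularity --- you never have to reopen the internals of the Dodds--Fremlin/Aliprantis--Burkinshaw proof --- at the price of carrying along the completion machinery; the paper's approach is shorter but rests on an inspection of an external proof that the reader must verify independently. Either is a legitimate proof of the corollary.
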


Now our aim comes down to  extending Corollary \ref{dom-com-Rie} to pre-Riesz spaces. We firstly consider the domain space $X$ to be a pre-Riesz space, and we  suppose it is pervasive and  Archimedean. Alternatively, one could consider the order continuous regular seminorm on $X$, then use Theorem \ref{compact-ext} and Corollary \ref{dom-com-Rie} to obtain compactness of $S$.

\begin{theorem}\label{comp-domination1}
Let  $X$ be a pervasive Archimedean pre-Riesz space  equipped with an order continuous regular norm $\|\cdot\|_X$, $(X^\delta, i)$ the Dedekind completion of $X$.
 Let $Y$ be a   Banach lattice with an order continuous norm. 
Let $T$ in $L(X,Y)$ be a positive compact operator  and assume that  $(X^\delta)'$ has an order continuous norm. 
If $S\in L(X,Y)$ with $0\le S\le T$, then $S$ is compact.
\end{theorem}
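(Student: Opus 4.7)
The plan is to lift both $S$ and $T$ to operators on the Dedekind completion $X^\delta$ and then invoke Corollary \ref{dom-com-Rie} inside the normed vector lattice $(X^\delta,\|\cdot\|_{X^\delta})$.

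First I would check that $S$ is bounded, as this is needed to apply Theorem \ref{compact-ext}. For $x\in X$ and any $y\in X$ with $-y\le x\le y$ one has $0\le y\pm x\le 2y$, so $0\le S\le T$ combined with the monotonicity of the norm on the Banach lattice $Y$ gives
\[\|Sx\|_Y\le \tfrac12\bigl(\|S(y+x)\|_Y+\|S(y-x)\|_Y\bigr)\le 2\|T\|\,\|y\|_X.\]
Taking the infimum over such $y$ and invoking the regularity of $\|\cdot\|_X$ yields $\|S\|\le 2\|T\|$. Theorem \ref{compact-ext} then produces unique bounded extensions $\widehat S,\widehat T\in L(X^\delta,Y)$, and compactness of $T$ is inherited by $\widehat T$.

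The main step, and the point at which I expect the most work, is to upgrade the domination $0\le S\le T$ on $X$ to $0\le \widehat S\le \widehat T$ on the whole of $X^\delta$. I plan a two-stage approximation through the Riesz completion $X^\rho$, in each stage combining Proposition \ref{J. Waaij} with the order continuity of the extended norm supplied by Theorem \ref{order-cont-norm} and Corollary \ref{ore-con-nor}. \emph{Stage one}: for $z\in (X^\rho)^+$, pervasiveness of $X$ together with Proposition \ref{J. Waaij} gives an upward directed net $(u_\nu)\subset i(X)^+$ with $u_\nu\uparrow z$; order continuity of $\|\cdot\|_{X^\rho}$ turns this into norm convergence, and $\widehat Su_\nu=S(i^{-1}u_\nu)\in Y^+$ together with norm closedness of $Y^+$ forces $\widehat Sz\ge 0$. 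The identical argument applied to $T-S$ gives $(\widehat T-\widehat S)z\ge 0$. \emph{Stage two}: for $y\in (X^\delta)^+$, use that $X^\rho$ is order dense and hence pervasive in its Dedekind completion $X^\delta$; another application of Proposition \ref{J. Waaij}, together with the order continuity of $\|\cdot\|_{X^\delta}$, produces an upward directed net $(z_\mu)\subset (X^\rho)^+$ with $z_\mu\uparrow y$ and $z_\mu\to y$ in norm. Combined with stage one and the continuity of $\widehat S,\widehat T$, this yields $0\le \widehat Sy\le \widehat Ty$.

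To finish, I would apply Corollary \ref{dom-com-Rie} to $X^\delta$ equipped with $\|\cdot\|_{X^\delta}$, to $Y$, and to the pair $\widehat S,\widehat T$. All hypotheses are in place: $X^\delta$ is a vector lattice carrying the order continuous Riesz (semi)norm $\|\cdot\|_{X^\delta}$ by Corollary \ref{ore-con-nor}; the norms of $Y$ and $(X^\delta)'$ are order continuous by assumption; and $0\le \widehat S\le \widehat T$ with $\widehat T$ compact has just been verified. Corollary \ref{dom-com-Rie} then gives compactness of $\widehat S$. Since $\|\cdot\|_X$ is regular, $\|\cdot\|_{X^\delta}$ extends $\|\cdot\|_X$ and the embedding $i$ is isometric, so $S=\widehat S\circ i$ sends norm bounded sets of $X$ to relatively compact sets of $Y$, and $S$ is compact.
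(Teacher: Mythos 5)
Your proposal follows essentially the same route as the paper's own proof: extend $S$ and $T$ to the Dedekind completion, transfer the domination to $0\le \widehat S\le \widehat T$, invoke Corollary \ref{dom-com-Rie} on $(X^\delta,\|\cdot\|_{X^\delta})$, and restrict back; your two-stage approximation of positive elements through $X^\rho$ is in fact more detailed than the paper's one-line assertion that the approximating sequence of an $x\in(X^\delta)^+$ may be taken in $X^+$ (though note that Proposition \ref{J. Waaij} by itself only yields a supremum, not the upward directedness of $\{u\in i(X)\colon 0<u\le z\}$ that your stage one quietly uses). The one hypothesis of Corollary \ref{dom-com-Rie} you leave hedged is that $\|\cdot\|_{X^\delta}$ is a genuine (Riesz) norm rather than a seminorm; the paper settles this by pervasiveness: for $0\neq z\in X^\delta$ there is $y\in X$ with $0<i(y)\le|z|$, so monotonicity and Theorem \ref{norm-extension}~(iii) give $\|z\|_{X^\delta}\ge\|i(y)\|_{X^\delta}=\|y\|_X>0$.
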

\begin{proof}
Recall that by Theorem \ref{norm-extension} (i) there exists the greatest regular seminorm $\|\cdot\|_{X^\delta}$ on $X^\delta$ defined by the formula \eqref{regul-ext}, which extending the norm on $X$.
By Theorem \ref{compact-ext} there exists a unique bounded linear extension $\widehat{T}$ of $T$ on $X^\delta$, which is compact. 
In fact, for $x\in X^\delta$, $\widehat{T}$ is given by $\widehat{T}x=\lim \widehat{T}(i(x_n))=\lim Tx_n$, where $i(x_n)_n\subseteq i(X)$ norm converges to $x$. We define $\widehat{S}$ in a similar way.  So for $x\in (X^\delta)^+$ we have $(x_n)_n\subseteq X^+$ and $Sx_n\le Tx_n$ for all $n\in \N$. It follows that
\[\widehat{S}x=\lim Sx_n\le \lim Tx_n=\widehat{T}x.\] 
The positivity of $\widehat{S}$ is clear. It is clear that $\|\cdot\|_{X^\delta}$ is monotone. Moreover, it is a norm. In fact, let $z\in X^\delta$ and $z\neq 0$. Since $X$ is pervasive, there exists $y\in X$ with $0< i(y)\le |z|$. Then $\| z \|_{X^\delta}= \| |z| \|_{X^\delta}\ge\|i(y)\|_{X^\delta}$.  By Theorem \ref{norm-extension} (iii) we have $\|y\|_Y=\|i(y)\|_{Y^\delta}$ for every $y\in Y$. Hence, $\| z \|_{X^\delta}>0$. Hence, $\|\cdot\|_{X^\delta}$ is a Riesz norm. 

Thus we could use  Corollary \ref{dom-com-Rie} to conclude that $\widehat{S}$ is compact. Hence $S=\widehat{S}|_X$ is compact.
\end{proof}

For an operator between two pre-Riesz spaces $X$ and $Y$, we could extend it to the Riesz completion. To use the method in the proof of Theorem \ref{Theorem 5.20AliBur1985}, we also need that the norms of $(X^\delta)'$ and  $Y^\delta$ are order continuous. But it is difficult to characterize the space structure of dual spaces of pre-Riesz spaces, for example, whether $(X')^\rho$ equals $(X^\rho)'$ or not. Fortunately, if we suppose that the codomain space $Y$ is a directed Archimedean partially ordered vector space and complete with respect to the regular norm, then it has a Dedekind completion which is norm complete as well. Then we can 
embed  $Y$ into the Dedekind completion, and use Theorem \ref{comp-domination1}. See the following two theorems.

\begin{theorem}\label{dedekind-norm-comp}
Let $X$ be a directed Archimedean partially ordered vector space with a regular norm $\|\cdot\|$ such that $X^+$ is $\|\cdot\|$-closed. Let $(X^\delta, i)$ be the Dedekind completion of $X$, and define for $y\in X^\delta$,
\[\|y\|_r :=\inf\{\|x\|\colon \, x\in X^+, -i(x)\le y\le i(x) \}.\]
If $(X, \|\cdot\|)$ is complete, then $(X^\delta, \|\cdot\|_r)$ is complete.
\end{theorem}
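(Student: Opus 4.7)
The plan is the classical completeness-via-summable-subsequence trick, combined with an order sandwich that lives in $X^\delta$. Let $(y_n)$ be $\|\cdot\|_r$-Cauchy in $X^\delta$, and pass to a subsequence $(y_{n_k})$ with $\|y_{n_{k+1}}-y_{n_k}\|_r<2^{-k}$. Because $X$ is majorizing and directed in its Dedekind completion, the infimum in the definition of $\|\cdot\|_r$ is over a nonempty set, so I can select $x_k\in X^+$ with $-i(x_k)\le y_{n_{k+1}}-y_{n_k}\le i(x_k)$ and $\|x_k\|<2^{-k}$. Since $\sum\|x_k\|<\infty$ and $(X,\|\cdot\|)$ is Banach, the partial sums $s_m:=x_1+\cdots+x_m$ converge in norm to some $s\in X$. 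The closedness of $X^+$ is used next: each tail $s_{m+j}-s_m\in X^+$ passes to the norm limit $s-s_m\in X^+$, and the same argument applied to an arbitrary $X$-upper bound $t$ of $(s_m)$ gives $t-s\in X^+$; hence $s=\sup_m s_m$ in $X$, equivalently $(s-s_m)\downarrow 0$ in $X$.

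With $s_0:=0$, define the sandwich
\[u_k:=y_{n_k}-i(s-s_{k-1}),\qquad v_k:=y_{n_k}+i(s-s_{k-1})\quad(k\ge 1).\]
Telescoping the bounds $\pm(y_{n_{j+1}}-y_{n_j})\le i(x_j)$ shows that $u_k\le u_{k+1}$ and $v_{k+1}\le v_k$, while $u_1=y_{n_1}-i(s)\le u_k\le v_k\le y_{n_1}+i(s)=v_1$. Dedekind completeness of $X^\delta$ then produces $u,v\in X^\delta$ with $u_k\uparrow u$ and $v_k\downarrow v$, and $u\le v$. Because $i\colon X\to X^\delta$ is an order dense embedding into a Dedekind complete vector lattice it preserves existing infima, so $(s-s_{k-1})\downarrow 0$ in $X$ lifts to $i(s-s_{k-1})\downarrow 0$ in $X^\delta$; consequently $v_k-u_k=2\,i(s-s_{k-1})\downarrow 0$, which forces $u=v=:y$.

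Finally, from $u_k\le y_{n_k}\le v_k$ and $u_k\le y\le v_k$ I read off $\pm(y-y_{n_k})\le v_k-u_k=i\bigl(2(s-s_{k-1})\bigr)$, and the definition of $\|\cdot\|_r$ gives
\[\|y-y_{n_k}\|_r\le 2\|s-s_{k-1}\|\longrightarrow 0.\]
Since $(y_n)$ is Cauchy and has a subsequence converging to $y$, the whole sequence converges to $y$, proving completeness. The main obstacle is the ``transport'' step connecting $X$ with $X^\delta$: one must exploit closedness of $X^+$ to upgrade the norm limit $s_m\to s$ to the order relation $s_m\uparrow s$ in $X$, and then invoke the (standard) preservation of infima by the Dedekind-completion embedding in order to deduce the crucial $i(s-s_{k-1})\downarrow 0$ in $X^\delta$ that collapses the sandwich.
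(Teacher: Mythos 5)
Your proof is correct, and its first half is essentially the paper's: both arguments reduce a Cauchy (respectively, absolutely summable) sequence to a norm-convergent series of dominating positive elements $x_k\in X^+$, use norm completeness of $X$ to produce the sum $s$, and use closedness of $X^+$ to upgrade the norm limit to the order inequalities $s\ge s_m$. The two proofs part ways in how the limit in $X^\delta$ is extracted. The paper observes that the partial sums are \emph{relatively uniformly} Cauchy with respect to the regulator $i(u)$ ($u=\sum 2^n x_n$) and invokes relative uniform completeness of the Dedekind complete space $X^\delta$, obtaining directly the bound $\pm\bigl(y-\sum_{n\le N}y_n\bigr)\le\lambda_N i(u)$ that controls $\|\cdot\|_r$. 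You instead squeeze $y_{n_k}$ between a monotone increasing and a monotone decreasing sequence and apply Dedekind completeness directly; this is clean, but it needs the additional facts that $s=\sup_m s_m$ in $X$ (not just $s\ge s_m$, which is all the paper uses) and that the order dense embedding $i\colon X\to X^\delta$ preserves this supremum, so that $i(s-s_{k-1})\downarrow 0$ in $X^\delta$. The latter is standard for the Dedekind completion (every $y\in X^\delta$ is the supremum of the elements of $i(X)$ below it), but it should be cited or proved, since order density as defined in the paper is an infimum condition and the dual statement is not literally the definition. The paper's relative-uniform route avoids this transfer of order convergence across the embedding, at the cost of invoking relative uniform completeness of Archimedean Dedekind complete spaces; both are legitimate, and the quantitative conclusions ($\|y-y_{n_k}\|_r\le 2\|s-s_{k-1}\|$ versus $\lambda_N\|u\|$) are of the same nature.
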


\begin{proof}
The proof is similar to \cite[Theorem 2.12]{Gaa1999}. 
Assume that $(X, \|\cdot\|)$ is complete.
Let $(y_n)_n$ be a sequence in $X^\delta$ such that $\sum_{n=1}^{\infty}2^n\|y_n\|_r<\infty$. We show that there exists $y\in X^\delta$ such that $\left\|y-\sum_{n=1}^N y_n\right\|_r\rightarrow 0$ as $N\rightarrow\infty$. 
Take $x_n\in X^+$ with $-i(x_n)\le y_n\le i(x_n)$ and $\|y_n\|_r\ge \|x_n\|-\frac{1}{4^n}$. 
Then $\sum_{n=1}^{\infty}2^n\|x_n\|<\infty$. Since $X$ is norm complete, $u:=\sum_{n=1}^{\infty}2^n x_n$ exists in $X$. 
As for every $n\in \N$ we have $x_n\in X^+$ and $X^+$ is closed, we have $u\in X^+$ and $u\ge \sum_{n=1}^{m}2^n x_n\ge 2^n x_n$ for every $m\in \N$. 
Now for $N>M$, we have $\sum_{n=1}^{N}y_n-\sum_{n=1}^{M}y_n=\sum_{n=M+1}^{N}y_n\le \sum_{n=M+1}^{N}i(x_n)\le \left(\sum_{n=M+1}^{N}2^{-n}\right)i(u)$, and $\sum_{n=1}^{N}y_n-\sum_{n=1}^{M}y_n\ge -\left(\sum_{n=M+1}^{N}2^{-n}\right)i(u)$. 
Since $\sum_{n=M+1}^{N}2^{-n}\rightarrow 0$ when $M, N\rightarrow \infty$, we have that $\left(\sum_{n=1}^N y_n\right)_N$ is a relatively uniformly Cauchy sequence. 
Since $X^\delta$ is Dedekind complete and hence relatively uniformly complete, there exists $y\in X^\delta$ and there is a sequence $(\lambda_N)_N$ of reals with $\lambda_N\rightarrow 0$ such that $-\lambda_N i(u)\le y-\sum_{n=1}^{N}y_n\le \lambda_N i(u)$.
 Then $\left\|y-\sum_{n=1}^N y_n\right\|_r\le \lambda_N \|u\|\rightarrow 0$ as $N\rightarrow \infty$. Thus, $\left(X^\delta, \|\cdot\|_r\right)$ is complete.
\end{proof}

Then we will proof the positive domination property of compact operators on pre-Riesz spaces in the following theorem.

\begin{theorem}\label{pos-dom-propertyonpre-Riesz}
Let $X$ be a pervasive Archimedean pre-Riesz space equipped with an order continuous regular norm. Let $Y$ be a directed Archimedean pre-Riesz space with an order continuous regular norm $\|\cdot\|_Y$ that $Y$ is norm complete. Let $X^\delta$ be the Dedekind completion of $X$ and $(X^\delta)'$ has order continuous norm. 
If a positive operator $S\colon  X\rightarrow Y$ is dominated by $T$, i.e. $0\le S\le T$, and $T$ is compact, then $S$ is compact as well.
\end{theorem}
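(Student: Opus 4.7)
The plan is to reduce this to Theorem \ref{comp-domination1} by embedding the codomain $Y$ into its Dedekind completion $Y^\delta$ equipped with the regular extension of $\|\cdot\|_Y$. Let $j\colon Y\to Y^\delta$ denote the canonical bipositive embedding and define on $Y^\delta$
\[
\|y\|_r:=\inf\{\|x\|_Y\colon x\in Y^+,\ -j(x)\le y\le j(x)\},\qquad y\in Y^\delta.
\]

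First I would verify that $(Y^\delta,\|\cdot\|_r)$ is a Banach lattice with an order continuous norm into which $Y$ embeds isometrically and bipositively. Regularity of $\|\cdot\|_Y$ forces monotonicity (for $0\le u\le v$ in $Y$ one has $-v\le u\le v$, so $\|u\|_Y\le\|v\|_Y$), whence $Y^+$ is $\|\cdot\|_Y$-closed. Theorem \ref{dedekind-norm-comp} then yields completeness of $(Y^\delta,\|\cdot\|_r)$; Theorem \ref{norm-extension}(i) shows that $\|\cdot\|_r$ is a regular, hence Riesz, seminorm on the vector lattice $Y^\delta$, turning it into a Banach lattice; and Theorem \ref{norm-extension}(iii) gives $\|j(y)\|_r=\|y\|_Y$ for all $y\in Y$, so $j$ is an isometric embedding. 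Order continuity of $\|\cdot\|_r$ on $Y^\delta$ would then follow from Theorem \ref{order-cont-norm}. Finally, a short argument like the one in the proof of Theorem \ref{comp-domination1} confirms that $\|\cdot\|_r$ is actually a norm.

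Next, set $\widetilde{T}:=j\circ T$ and $\widetilde{S}:=j\circ S$ in $L(X,Y^\delta)$. Positivity of $j$ preserves the domination $0\le\widetilde{S}\le\widetilde{T}$, and since $j$ is bounded and $T$ is compact, $\widetilde{T}$ is compact as an operator into $Y^\delta$. All hypotheses of Theorem \ref{comp-domination1} are now in place: $X$ is pervasive Archimedean pre-Riesz with an order continuous regular norm, $(X^\delta)'$ has order continuous norm, $Y^\delta$ is a Banach lattice with an order continuous norm, and $\widetilde{T}$ dominates the positive operator $\widetilde{S}$. Hence Theorem \ref{comp-domination1} produces compactness of $\widetilde{S}$.

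To transfer compactness back to $S$, take any bounded sequence $(x_n)\subset X$. By compactness of $\widetilde{S}$, some subsequence $j(S(x_{n_k}))$ is Cauchy in $(Y^\delta,\|\cdot\|_r)$; since $j$ is isometric, $(S(x_{n_k}))$ is Cauchy in $(Y,\|\cdot\|_Y)$, and norm completeness of $Y$ provides a limit in $Y$. Hence $S$ is compact. The main obstacle in this strategy is securing order continuity of $\|\cdot\|_r$ on $Y^\delta$: Theorem \ref{order-cont-norm} requires $Y$ to be pervasive so that Lemma \ref{Lemma 3.7.11anke-onno-book} applies, and this condition, while not explicitly listed among the hypotheses, seems to be the natural assumption under which the whole argument goes through; without pervasiveness the chain from $\widetilde{S}$ back to $S$ collapses precisely at the invocation of Theorem \ref{comp-domination1}.
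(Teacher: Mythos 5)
Your proposal follows essentially the same route as the paper's own proof: embed $Y$ bipositively into $Y^\delta$ with the regular extension norm, check that $Y^\delta$ becomes a Banach lattice with order continuous norm via Theorems \ref{norm-extension}, \ref{order-cont-norm} and \ref{dedekind-norm-comp}, apply Theorem \ref{comp-domination1} to $j\circ S\le j\circ T$, and pull compactness back through the isometry using norm completeness of $Y$. Your closing worry about pervasiveness of $Y$ is well founded and is not a defect of your argument alone: the paper's proof likewise invokes Theorem \ref{order-cont-norm} for $Y$ (and implicitly uses pervasiveness to see that $\|\cdot\|_r$ is a norm rather than a seminorm on $Y^\delta$) even though pervasiveness of $Y$ is not among the stated hypotheses, so this assumption should indeed be added. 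One small caveat in your write-up: monotonicity of $\|\cdot\|_Y$ does not by itself yield that $Y^+$ is norm closed, so the hypothesis of Theorem \ref{dedekind-norm-comp} still needs separate justification --- though the paper's proof passes over this point silently as well.
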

\begin{proof}
Let $Y^\delta$ be the Dedekind completion of $Y$, and $i\colon Y\rightarrow Y^\delta$ be the natural embedding map. Since $i$ is bipositive, we have $0\le i\circ S\le i\circ T$ form $X$ to $Y^\delta$. As $i$ is continuous and $T$ is compact,  $i\circ T$ is compact.
By Theorem \ref{norm-extension} (i) there exists the greatest regular seminorm  on $Y^\delta$ (in fact, since $Y$ has a norm, the seminorm on $Y^\delta$ is a norm) extending the norm on $Y$.
 Because $Y$ has order continuous norm, by Theorem \ref{order-cont-norm} the space $Y^\delta$ has order continuous norm as well, and by Theorem \ref{dedekind-norm-comp} the space $Y^\delta$ is norm complete. It follows from Theorem \ref{comp-domination1} that $i\circ S$ is compact. Let $(x_n)_n$ be a norm bounded sequence in $X$, then $(i\circ S)(x_{n})_n$ has a norm convergent subsequence $(i\circ S)(x_{n_k})_{n_k}$ in $Y^\delta$.
Then $(i\circ S)(x_{n_k})_{n_k}$ is a Cauchy sequence, so $S(x_{n_k})_{n_k}$ is a Cauchy sequence in $Y$. 
  Since $\|\cdot\|_Y$ is regular, by Theorem \ref{norm-extension} (iii) we have $\|y\|_Y=\|i(y)\|_{Y^\delta}$ for every $y\in Y$. As $Y$ is norm complete, $S(x_{n_k})$ is convergent in $Y$.
 Hence $S$ is compact. 
\end{proof}

By the following example, we show that a directed Archimedean pre-Riesz space which has an order continuous norm and complete with respect to this norm.

\begin{example} 
Let $X=\left\{x\in l^1\colon\, x^{(1)}=\sum_{k=2}^{\infty}x^{(k)}\right\}$.
Then $X$ is not a Riesz space, but a directed Archimedean pre-Riesz space. As $X$ is a closed subspace of $\left(l^1, \left\|\cdot\right\|_1\right)$, it is complete with respect to $\left\|\cdot\right\|_1$. Suppose that $(x_n)$ be a sequence in $X$ with  $x_n\downarrow 0$. From $\|\cdot\|_1$ is order continuous, it follows that $\left\|x_n\right\|_{1}=\sum_{k=1}^{\infty}x_n^{(k)}=2\sum_{k=2}^{\infty}x_n^{(k)} \le 2\sum_{k=1}^{\infty}x_n^{(k)}\rightarrow 0$.
\end{example}

Beside the domination property of compact operators  as in Theorem \ref{Theorem 5.20AliBur1985}, there is also a result in \cite[Theorem 5.13]{AliBur1985} which clarifies the positive domination property of third power of compact operators on Banach lattices, see the following theorem.

\begin{theorem}(Aliprantis-Burkinshaw)\label{Aliprantis-Burkinshaw}
If a positive operator $S$ on a Banach lattice is dominated by a compact operator, then $S^3$ is a compact operator.
\end{theorem}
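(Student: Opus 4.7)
The plan is to follow the classical strategy of Aliprantis and Burkinshaw \cite{aliprantis1980positive}, which establishes compactness of $S^{3}$ by combining two ``one-sided'' compactness properties that the inequality $0\le S\le T$ transmits from the compact operator $T$ to $S$. First, I would observe that $T\ge S\ge 0$ forces $T\ge 0$ and that the iterated lattice inequality $|S^{k}x|\le S^{k}|x|\le T^{k}|x|$ holds for every $k\in\N$ and $x\in E$; thus $S^{k}$ is dominated by $T^{k}$, which will be used repeatedly.

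The two structural facts to extract are the following. \emph{(a) AM-compactness of $S$:} the operator $S$ maps every order interval $[-u,u]$, $u\in E^{+}$, into a relatively norm-compact set. This is proved by a Dodds--Fremlin-type lattice approximation: starting from a finite $\varepsilon$-net for the relatively compact set $T([-u,u])$, replacing its elements by their moduli to get an order-bounded approximation, and then using $0\le S\le T$ together with a disjoint decomposition of $[-u,u]$ to approximate $S|_{[-u,u]}$ by a finite-rank operator up to $\varepsilon$ in norm. \emph{(b) Almost order-boundedness of $S^{2}(B_{E})$:} for each $\varepsilon>0$ there exists $u_{\varepsilon}\in E^{+}$ with
\[
S^{2}(B_{E}) \subseteq [-u_{\varepsilon}, u_{\varepsilon}] + \varepsilon B_{E}.
\]
Here I would use that $T^{2}$ is compact to obtain a finite $\varepsilon$-net of $T^{2}(B_{E})$, build $u_{\varepsilon}$ as the lattice supremum of the moduli of this net, and then apply the domination $|S^{2}x|\le T^{2}|x|$ to split each $S^{2}x$ into a part dominated by $u_{\varepsilon}$ and a disjoint residual whose norm is controlled by the approximation error.

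Combining (a) and (b) finishes the proof: for every $\varepsilon>0$,
\[
S^{3}(B_{E}) = S\bigl(S^{2}(B_{E})\bigr) \subseteq S\bigl([-u_{\varepsilon},u_{\varepsilon}]\bigr) + \varepsilon\|S\|\,B_{E},
\]
and the first set on the right is relatively compact by (a), while the second has arbitrarily small radius. Hence $S^{3}(B_{E})$ is totally bounded and $S^{3}$ is compact.

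The main obstacle is step (b). Converting the bare compactness of $T^{2}$ into a quantitative order-theoretic control over $S^{2}(B_{E})$ — specifically, producing a \emph{single} positive element $u_{\varepsilon}$ that absorbs $S^{2}(B_{E})$ up to $\varepsilon$ — requires the delicate disjoint-decomposition lemma that is the technical heart of \cite{aliprantis1980positive}. By contrast, step (a) is comparatively routine once the Dodds--Fremlin lattice approximation scheme is set up, and the concluding estimate is purely formal. It is also the reason one cannot in general lower the exponent from $3$ to $2$ without stronger order-continuity hypotheses: step (b) delivers \emph{almost} order-boundedness of $S^{2}(B_{E})$ but not relative compactness, so one extra application of $S$ (and the AM-compactness from (a)) is needed.
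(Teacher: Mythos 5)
The paper does not actually prove this theorem: it is quoted from \cite[Theorem 5.13]{AliBur1985} (originally \cite{aliprantis1980positive}) and then adapted to pre-Riesz spaces in Theorem \ref{S^3-dom-compact}. Judged on its own merits, your outline contains a genuine gap in step (a). The domination $0\le S\le T$ with $T$ compact does \emph{not} make $S$ AM-compact; that is, it does not force $S[-u,u]$ to be relatively \emph{norm} compact. What the Dodds--Fremlin band argument (Theorem \ref{Dodds-Fremlin}, used in Lemma \ref{totally-bdd-domi}) actually transfers from $T$ to $S$ is total boundedness of $S[0,u]$ for the \emph{absolute weak} topology $|\sigma|(E,E')$, which is strictly weaker than norm total boundedness. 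One can see that (a) cannot hold in general: the standard approximation lemma already gives, for every $\varepsilon>0$, some $u\ge 0$ with $S(B_E^+)\subseteq[0,u]+\varepsilon B_E$ (this is your step (b) for the first power, which is correct and is essentially Lemma \ref{lemma 5.1} of the paper). If $S[0,u]$ were relatively norm compact, then $S^2(B_E^+)\subseteq S[0,u]+\varepsilon\|S\|B_E$ would already make $S^2$ compact --- contradicting the known Aliprantis--Burkinshaw examples of dominated operators whose \emph{square} fails to be compact. Your argument would therefore prove too much.

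The correct route --- the one followed both in \cite{AliBur1985} and in the paper's pre-Riesz analogue --- replaces (a) by two weaker facts: (i) $S[0,u]$ is $|\sigma|(E,E')$-totally bounded (domination of AM-compactness only in the absolute weak topology, via the band theorem); and (ii) $S$ is continuous on norm-bounded sets from $|\sigma|(E,E')$ to the norm topology (the analogue of Lemma \ref{com-dom-continuous}), hence carries $|\sigma|$-totally bounded norm-bounded sets to norm totally bounded sets by Theorem \ref{con-tot-bounded}. Combining these, $S^2[0,u]=S(S[0,u])$ is norm totally bounded, and then $S^3(B_E^+)\subseteq S^2[0,u]+\varepsilon\|S\|^2 B_E$ yields compactness of $S^3$. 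This also corrects your closing explanation of why the exponent cannot be lowered to $2$: almost order boundedness is already available for $S(B_E)$, not just for $S^2(B_E)$; the third power is needed because upgrading the image of an order interval from $|\sigma|$-total boundedness to norm total boundedness costs one further application of $S$.
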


We will show that similar result in pre-Riesz spaces by using the theory of AM-space, provided  that the pre-Riesz space has an order unit and an order unit norm. 

Let us recall some know results first.
The following two results can be found in \cite[Theorem 3.3 and Theorem 5.10]{AliBur1985}.

\begin{theorem}\label{con-tot-bounded}
Let $T\colon (X, \tau)\rightarrow (Y, \xi)$ be an operator between two topological vector spaces. If $T$ is continuous on the $\tau$-bounded subsets of $X$, then $T$ carries $\tau$-totally bounded sets to $\xi$-totally bounded sets.
\end{theorem}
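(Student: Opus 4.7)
The plan is to reduce the total boundedness of $T(A)$ to a single application of the continuity hypothesis, with linearity doing the heavy lifting. The key observation is that although $T$ is only assumed continuous on each $\tau$-bounded set, linearity converts the pointwise continuity on $A - A$ (which is bounded) into a uniform statement on $A$.

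First I would check that if $A \subseteq X$ is $\tau$-totally bounded, then so is $A - A$. Given a $\tau$-neighborhood $V$ of zero, pick a balanced $\tau$-neighborhood $V_1$ with $V_1 - V_1 \subseteq V$ and cover $A$ by a finite set $F + V_1$; then $A - A \subseteq (F - F) + (V_1 - V_1) \subseteq (F - F) + V$, and $F - F$ is finite. In particular $A - A$ is $\tau$-bounded, so the hypothesis applies and $T\restriction_{A-A}$ is continuous in the subspace topology.

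Next, since $0 \in A - A$ and $T$ is linear (so $T(0) = 0$), continuity of $T\restriction_{A-A}$ at the origin yields, for any $\xi$-neighborhood $W$ of zero in $Y$, a $\tau$-neighborhood $V$ of zero in $X$ with $T\bigl(V \cap (A - A)\bigr) \subseteq W$. Using total boundedness of $A$, I would then choose $x_1, \dots, x_n \in A$ with $A \subseteq \bigcup_{i=1}^n (x_i + V)$. For any $a \in A$ there exists $i$ with $a - x_i \in V$, and $a - x_i \in A - A$ automatically, so by linearity $T(a) - T(x_i) = T(a - x_i) \in W$. Thus $T(A) \subseteq \{T(x_1), \ldots, T(x_n)\} + W$, which is precisely $\xi$-total boundedness.

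The step I expect to be the real point of the argument is noticing that one must not apply continuity of $T\restriction_A$ pointwise on $A$ itself --- that would yield only a family of neighborhoods $V_a$ depending on $a$, with no uniform control. Relocating to the $\tau$-bounded set $A - A$ and using continuity \emph{there} at a single point (the origin) supplies the uniform neighborhood $V$, and linearity then transports this uniformity back to $A$. Everything else is a routine $\eps$-net argument.
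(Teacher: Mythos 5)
Your proof is correct. The paper does not prove this statement at all --- it is quoted verbatim from Aliprantis--Burkinshaw (\emph{Positive Operators}, Theorem 3.3) --- and your argument (pass to the $\tau$-bounded set $A-A$, use continuity of the restriction at the origin to get a uniform neighborhood $V$, then transport it back to $A$ via linearity and a finite $V$-net) is precisely the standard proof given in that reference, so there is nothing to add.
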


\begin{theorem}(Dodds-Fremlin)\label{Dodds-Fremlin}
Let $X$ and $Y$ be two Riesz spaces with $Y$ Dedekind complete. If $\tau$ is an order continuous locally convex solid topology on $Y$, then for each $x\in X^+$, the set 
\[B=\{T\in L_b(X, Y)\colon \, T[0, x] \text{ is } \tau\text{-totally bounded}\}\]
 is a band in $L_b(X, Y)$.
\end{theorem}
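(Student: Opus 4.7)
The plan is to verify that $B$ has the three defining properties of a band in the Dedekind complete vector lattice $L_b(X,Y)$ (note that $L_b(X,Y)$ is Dedekind complete because $Y$ is): namely, $B$ is a vector subspace, $B$ is solid, and $B$ is order closed (the last two together being equivalent to $B$ being a band).

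First, I would check that $B$ is a linear subspace. This is routine: for $T_1, T_2 \in B$ and $\alpha, \beta \in \R$, the inclusion $(\alpha T_1 + \beta T_2)[0,x] \subseteq \alpha T_1[0,x] + \beta T_2[0,x]$, combined with the elementary fact that finite sums and scalar multiples of $\tau$-totally bounded sets remain $\tau$-totally bounded, gives $\alpha T_1 + \beta T_2 \in B$.

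Second, I would prove that $B$ is solid. Suppose $|S| \le T$ in $L_b(X,Y)$ with $T \in B^+$. For every $y \in [0,x]$ one has $|Sy| \le |S|y \le Ty$, so $Sy$ lies in the solid hull $\operatorname{sol}(T[0,x])$, giving $S[0,x] \subseteq \operatorname{sol}(T[0,x])$. The crux is then to invoke the auxiliary fact that, under an order continuous locally convex solid topology, the solid hull of a $\tau$-totally bounded set is again $\tau$-totally bounded. I expect this to be the main obstacle: it requires a nontrivial approximation argument that exploits the Lebesgue (order continuous) property of $\tau$ to tame the enlargement produced by solidification, and it leans on the existence of a base of solid $\tau$-neighbourhoods together with the Riesz--Kantorovich formulas.

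Third, I would show $B$ is order closed. Suppose $(T_\alpha) \subseteq B^+$ and $T_\alpha \uparrow T$ in $L_b(X,Y)$. By the Riesz--Kantorovich theorem, the supremum in $L_b(X,Y)$ is computed pointwise on $X^+$, so $T_\alpha x \uparrow Tx$ in $Y$, hence $(T-T_\alpha)x \downarrow 0$. Order continuity of $\tau$ gives $(T-T_\alpha)x \xrightarrow{\tau} 0$. Given any solid $\tau$-neighbourhood $U$ of $0$, pick $\alpha_0$ with $(T-T_{\alpha_0})x \in U$. For every $y \in [0,x]$ one has $0 \le (T-T_{\alpha_0})y \le (T-T_{\alpha_0})x$, and solidity of $U$ yields $(T-T_{\alpha_0})y \in U$. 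Thus $T[0,x] \subseteq T_{\alpha_0}[0,x] + U$, and since $T_{\alpha_0}[0,x]$ is $\tau$-totally bounded, so is $T[0,x]$. Hence $T \in B$, and $B$ is a band.
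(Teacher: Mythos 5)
The paper itself offers no proof of this theorem: it is quoted as a known result from \cite[Theorem 5.10]{AliBur1985}, so your attempt has to be measured against the standard proof there. Your step 1 is routine and correct, and your step 3 (order closedness) is essentially verbatim the standard argument: $T_\alpha x\uparrow Tx$, order continuity of $\tau$, and a solid neighbourhood $U$ give $T[0,x]\subseteq T_{\alpha_0}[0,x]+U$, which yields total boundedness. Both of these parts are fine.

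The genuine gap is in step 2. The auxiliary fact you propose to lean on --- that under an order continuous locally convex solid topology the solid hull of a $\tau$-totally bounded set is again $\tau$-totally bounded --- is false. Take $Y=L^1[0,1]$ with its norm topology, which is locally convex-solid and order continuous. The singleton $\{\mathbf{1}\}$ is trivially totally bounded, but its solid hull is the order interval $[-\mathbf{1},\mathbf{1}]$, which contains the Rademacher functions $r_n$, and $\|r_n-r_m\|_1=1$ for all $n\neq m$, so $[-\mathbf{1},\mathbf{1}]$ is not totally bounded. The inclusion $S[0,x]\subseteq\mathrm{sol}(T[0,x])$ is therefore far too weak: it retains only the pointwise bound $|Sy|\le Ty$ and discards the essential information that $S$ is itself a linear operator dominated by $T$. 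This is precisely where the entire depth of the Dodds--Fremlin theorem sits. The correct argument (the domination lemma preceding this theorem in \cite{AliBur1985}) approximates $S$ on $[0,x]$, modulo a solid $\tau$-neighbourhood, by finite linear combinations of components of $T$, using the Riesz--Kantorovich formulas and the up-down approximation in the band generated by $T$ inside the Dedekind complete lattice $L_b(X,Y)$; order continuity of $\tau$ is used in that approximation, not in a solid-hull argument. A smaller omission in the same step: to get that $B$ is an ideal you also need $T\in B\Rightarrow|T|\in B$ (you only treat $|S|\le T$ with $T\ge 0$), and this too requires the domination machinery rather than a solid-hull estimate.
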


The next result is due to S. Kaplan \cite[Theorem 3.50]{kaplan1959second}.

\begin{theorem}(Kaplan)\label{kaplan}
Let $X$ be a Riesz space, and let $A$ be a subset of $X^\sim$ separating the points of $X$. Then the topological dual of $(X, |\sigma|(X, A))$ is precisely the ideal generated by $A$ in $X^\sim$.
\end{theorem}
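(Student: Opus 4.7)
The plan is to prove the two inclusions separately, exploiting the explicit form of the seminorms $p_f(x)=|f|(|x|)$ generating the topology $|\sigma|(X,A)$.

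First I would handle the easy direction (ideal $\subseteq$ dual). Let $I(A)$ denote the ideal in $X^\sim$ generated by $A$; recall that $h\in I(A)$ iff there exist $f_1,\dots,f_n\in A$ and $\lambda>0$ such that $|h|\le \lambda(|f_1|+\cdots+|f_n|)$. For such $h$ and any $x\in X$ the standard estimate $|h(x)|\le |h|(|x|)\le \lambda\sum_{i=1}^n p_{f_i}(x)$ shows that $h$ is $|\sigma|(X,A)$-continuous.

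For the harder direction (dual $\subseteq$ ideal), let $\phi$ be a $|\sigma|(X,A)$-continuous linear functional. By definition of the topology there exist $f_1,\dots,f_n\in A$ and $C>0$ with $|\phi(x)|\le C\max_i p_{f_i}(x)\le C\,g(|x|)$ for all $x\in X$, where $g:=|f_1|+\cdots+|f_n|\in (X^\sim)^+$. From this I would first check that $\phi$ is order bounded, hence $\phi\in X^\sim$: for $x\in X^+$ and any $y$ with $|y|\le x$ one has $|\phi(y)|\le C g(|y|)\le C g(x)$, so the Riesz--Kantorovich formula gives $|\phi|(x)\le C g(x)$ for all $x\in X^+$, i.e.\ $|\phi|\le Cg$. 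Since $g$ belongs to $I(A)$ (each $|f_i|$ does, as ideals are closed under absolute value), and $I(A)$ is an ideal, this domination forces $\phi\in I(A)$.

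The main subtlety I would expect is the order-boundedness step: one must pass from the a priori analytic inequality $|\phi(x)|\le C g(|x|)$ to the lattice inequality $|\phi|\le Cg$ in $X^\sim$. This uses precisely that $g$ is already in $X^\sim$ (so $g(x)$ makes sense and is finite for every $x\in X^+$), which is why the hypothesis $A\subseteq X^\sim$ and the separation of points are needed. Once order-boundedness is secured, the Riesz--Kantorovich computation of $|\phi|$ and the ideal property do the rest, and combining both inclusions yields the topological dual of $(X,|\sigma|(X,A))$ equals $I(A)$.
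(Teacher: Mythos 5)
Your proof is correct: the paper itself gives no argument for this statement (it only cites Kaplan's Theorem 3.50), and your two inclusions --- the estimate $|h(x)|\le |h|(|x|)\le\lambda\sum_{i}p_{f_i}(x)$ for $h$ in the generated ideal, and the passage from $|\sigma|(X,A)$-continuity to $|\phi|\le Cg$ via order boundedness and the Riesz--Kantorovich formula, followed by the ideal property --- constitute a complete standard proof. The only small inaccuracy is your remark on where the hypothesis that $A$ separates the points of $X$ enters: it is not what makes the order-boundedness step work (that step only needs $g\in X^\sim$); rather, it guarantees that $|\sigma|(X,A)$ is Hausdorff, so that the topological dual is being taken of a genuine locally convex topology.
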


As a result of the above theorem, we have the following corollary.
\begin{corollary}\label{consist-dual-sys}
Let $X$ be a Riesz space and $\langle X, X'\rangle$ a Riesz dual system. Then the topological dual of $(X, |\sigma|(X, X'))$ is precisely $X'$, and $|\sigma|(X, X')$ is consistent with $\langle X, X'\rangle$.
\end{corollary}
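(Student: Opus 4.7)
The plan is to derive this corollary as an essentially immediate consequence of Kaplan's theorem (Theorem \ref{kaplan}) combined with the definition of a Riesz dual system; I do not expect any real obstacle, since the argument is a matter of unpacking definitions and observing that one hypothesis collapses.

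First I would invoke the definition of a Riesz dual system: saying that $\langle X, X'\rangle$ is a Riesz dual system means precisely that $X'$ is an ideal of $X^\sim$ which separates the points of $X$. In particular $A := X'$ meets the hypothesis of Theorem \ref{kaplan}, so that theorem identifies the topological dual of $(X, |\sigma|(X, X'))$ with the ideal generated by $X'$ in $X^\sim$. Since $X'$ is itself already an ideal of $X^\sim$, the ideal it generates is just $X'$. This yields the identification $(X, |\sigma|(X, X'))' = X'$, which is the first assertion.

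For the second assertion I would simply appeal to the definition of consistency stated in the Preliminaries: a locally convex topology $\tau$ on $X$ is consistent with the dual system $\langle X, X'\rangle$ precisely when the topological dual of $(X,\tau)$ equals $X'$. Since the first part establishes exactly this equality for $\tau = |\sigma|(X, X')$, consistency follows at once, finishing the proof.
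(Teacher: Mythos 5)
Your proposal is correct and is precisely the argument the paper intends: the corollary is stated as an immediate consequence of Kaplan's theorem, and your observation that the ideal generated by $X'$ in $X^\sim$ is $X'$ itself (since a Riesz dual system requires $X'$ to be an ideal separating points) is exactly the step that makes it immediate. The consistency claim then follows from the definition given in the preliminaries, as you say.
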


Let us recall some known results with respect to the Riesz dual system. The following two are due to \cite[Theorem 3.57 and Theorem 3.54]{AliBur1985}.

\begin{theorem}\label{Theorem3.57AliBur1985}
For a Riesz dual system $\langle X, X'\rangle$, the following statements are equivalent.
\begin{itemize}
\item[(i)] $X$ is Dedekind complete and $\sigma(X, X')$ is order continuous.
\item[(ii)] $X$ is an ideal of $X''$.
\end{itemize}
\end{theorem}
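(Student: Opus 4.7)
My plan is to prove the two implications separately: (ii)$\,\Rightarrow\,$(i) by a direct structural argument using that $X''$ is always Dedekind complete, and (i)$\,\Rightarrow\,$(ii) by combining a Hahn--Banach density with a Nakano-type order-interval compactness.

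For (ii)$\,\Rightarrow\,$(i), let $j\colon X\to X''$ denote the canonical embedding. First, $X''=(X')^\sim$ is always Dedekind complete, since the order dual of any Riesz space is. Ideals inherit Dedekind completeness from their ambient Riesz space, so $X$ is Dedekind complete. To see that $\sigma(X,X')$ is order continuous, take $x_\alpha\downarrow 0$ in $X$. Because $X$ is an ideal in $X''$, infima of downward-directed nets in $X$ coincide with their infima computed in $X''$, so $j(x_\alpha)\downarrow 0$ in $X''$ as well. A standard fact asserts that for any Riesz space $E$ and any $e\in E$, the map $E^\sim\to\R$, $e'\mapsto e'(e)$, is an order continuous functional on $E^\sim$ (verified by the Riesz--Kantorovich formula for infima in $E^\sim$). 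Applied with $E=X'$ and $e=x'\in X'$, this yields that $\mathrm{ev}_{x'}\colon X''\to\R$, $x''\mapsto x''(x')$, is order continuous, so $\langle x_\alpha,x'\rangle=j(x_\alpha)(x')\to 0$, as desired.

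For (i)$\,\Rightarrow\,$(ii), I would take $x''\in X''$ with $|x''|\le j(|y|)$ for some $y\in X$. Since $j$ is a Riesz homomorphism, after splitting $x''=(x'')^+-(x'')^-$ it suffices to prove that for every $x\in X^+$,
\[j\bigl([0,x]\bigr)=[0,j(x)]\subset X''.\]
I would establish this by density-plus-closedness in the $\sigma(X'',X')$-topology. For density: suppose $x''\in[0,j(x)]$ is not in the $\sigma(X'',X')$-closure of the convex set $j([0,x])$. Since the topological dual of $(X'',\sigma(X'',X'))$ is $X'$, Hahn--Banach separation produces $x'\in X'$ and $\alpha\in\R$ with $\langle u,x'\rangle\le\alpha<\langle x'',x'\rangle$ for all $u\in[0,x]$. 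By the Riesz--Kantorovich formula $\sup_{u\in[0,x]}\langle u,x'\rangle=\langle x,(x')^+\rangle$, while $0\le x''\le j(x)$ and $(x')^-\ge 0$ force $\langle x'',x'\rangle\le\langle x'',(x')^+\rangle\le\langle x,(x')^+\rangle$, a contradiction. For closedness: under (i), the Nakano-type compactness theorem (Theorem 3.54 in \cite{AliBur1985}) asserts that the order interval $[0,x]$ is $\sigma(X,X')$-compact in $X$. Since $j\colon(X,\sigma(X,X'))\to(X'',\sigma(X'',X'))$ is continuous, $j([0,x])$ is $\sigma(X'',X')$-compact, hence $\sigma(X'',X')$-closed in $X''$. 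Density and closedness together give $j([0,x])=[0,j(x)]$, so $x''\in j(X)$ and $X$ is an ideal of $X''$.

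The main obstacle I anticipate is the $\sigma(X,X')$-compactness of order intervals under (i). If one cannot cite this as a black box from \cite{AliBur1985}, it must be reproved here, which in turn requires a delicate interplay of Dedekind completeness of $X$ (to form order-theoretic $\liminf$ and $\limsup$ of nets in $[0,x]$) with order continuity of $\sigma(X,X')$ (to identify those order limits with the $\sigma(X,X')$-cluster points). Once this compactness ingredient is available, everything else reduces to the clean Hahn--Banach separation outlined above.
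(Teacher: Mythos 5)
First, a point of reference: the paper does not prove this statement at all --- it is quoted verbatim from Aliprantis--Burkinshaw \cite[Theorem 3.57]{AliBur1985} as a known ingredient, so there is no in-paper proof to compare yours against. Judged on its own terms, your implication (ii)$\Rightarrow$(i) is correct and complete: $X''$ is Dedekind complete as (a band in) an order dual, ideals inherit Dedekind completeness and preserve infima of nets decreasing to $0$, and the pointwise evaluation of a downward-directed net in $(X')^\sim$ at a positive element of $X'$ does compute the infimum (Riesz--Kantorovich), which gives $\sigma(X,X')$-convergence. Your reduction of (i)$\Rightarrow$(ii) to $j([0,x])=[0,j(x)]$ and your density argument via Hahn--Banach separation plus $\sup_{u\in[0,x]}\langle u,x'\rangle=\langle x,(x')^+\rangle$ are also sound.

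The genuine gap is exactly where you suspect it: the closedness step. You cite ``Theorem 3.54 in \cite{AliBur1985}'' for the $\sigma(X,X')$-compactness of order intervals under (i), but that theorem --- quoted in this very paper as Theorem \ref{Theorem3.54AliBur1985} --- is the statement that every consistent locally convex solid topology is order continuous iff $\sigma(X,X')$ is; it says nothing about compactness of order intervals. The fact you actually need (Nakano's theorem that Dedekind completeness plus order continuity of $\sigma(X,X')$ forces order intervals to be $\sigma(X,X')$-compact) is, in Aliprantis--Burkinshaw's own development, one of the equivalent conditions bundled into the same circle of results as Theorem 3.57, and its nontrivial direction is proved there essentially \emph{by} showing $j([0,x])=[0,j(x)]$. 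So invoking it as a black box makes the argument circular. To close the gap you must carry out the argument you sketch in your final paragraph: given a net $u_\alpha\in[0,x]$ with $j(u_\alpha)\to x''$ in $\sigma(X'',X')$, use Dedekind completeness to form $w_\beta:=\sup_{\alpha\ge\beta}u_\alpha\downarrow w$ and $v_\beta:=\inf_{\alpha\ge\beta}u_\alpha\uparrow v$, use order continuity of $\sigma(X,X')$ to pass these order limits through $j$, and then argue (after passing to a suitable subnet) that $j(v)\le x''\le j(w)$ squeezes $x''$ into $j(X)$. That step is the real content of the theorem, and as written your proposal does not supply it.
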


\begin{theorem}\label{Theorem3.54AliBur1985}
For a Riesz dual system $\langle X, X'\rangle$, the following statements are equivalent.
\begin{itemize}
\item[(i)] Every consistent locally convex solid topology on $X$ is order continuous.
\item[(ii)] $\sigma(X, X')$ is order continuous.
\end{itemize}
\end{theorem}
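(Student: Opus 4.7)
The plan is to prove both implications by exploiting the absolute weak topology $|\sigma|(X, X')$ as a natural ``test'' topology for one direction and a Dini-type uniform convergence argument on weak-star compact polars for the other. Throughout, I will freely use that for a consistent locally convex-solid topology the unit ball of any generating Riesz seminorm is solid, and that its polar is a solid weak-star compact subset of $X'$.

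For (i) $\Rightarrow$ (ii), I would argue as follows. By Corollary \ref{consist-dual-sys}, $|\sigma|(X, X')$ is a consistent locally convex-solid topology, so hypothesis (i) forces it to be order continuous. In particular, for every net $x_\alpha \downarrow 0$ and every $x' \in X'$ one has $|x'|(x_\alpha) \to 0$, and since $|x'(x_\alpha)| \le |x'|(x_\alpha)$ this implies $x'(x_\alpha) \to 0$, which is precisely the order continuity of $\sigma(X, X')$.

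For (ii) $\Rightarrow$ (i), let $\tau$ be any consistent locally convex-solid topology on $X$ and $p$ a Riesz seminorm generating $\tau$ with solid unit ball $U$. The first point is that (ii) is equivalent to the statement that every $x' \in X'$ is order continuous, and since $X'$ is an ideal in $X^\sim$ the same is true of $|x'|$. Next, $U^\circ \subset X'$ is $\sigma(X', X)$-compact by Alaoglu's theorem and, because $U$ is solid, $U^\circ$ is a solid subset of $X'$. Setting $A := U^\circ \cap (X')^+$, a short lattice manipulation based on $|x'(x)| \le |x'|(|x|)$ and the membership $|x'| \in A$ yields
\[
p(x_\alpha) \;=\; \sup_{x' \in U^\circ} |x'(x_\alpha)| \;=\; \sup_{y' \in A} y'(x_\alpha)
\]
for every $x_\alpha \ge 0$. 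The proof then finishes by Dini's theorem: for $x_\alpha \downarrow 0$, the maps $y' \mapsto y'(x_\alpha)$ are $\sigma(X', X)$-continuous on the compact set $A$, form a decreasing net in $\alpha$, and converge pointwise to $0$ by order continuity of each $y' \in A$. Hence the convergence is uniform on $A$, so $p(x_\alpha) \to 0$ and $\tau$ is order continuous.

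The main obstacle I anticipate is establishing the identity $p(x_\alpha) = \sup_{y' \in A} y'(x_\alpha)$: it requires combining the polar formula $p(x) = \sup_{x' \in U^\circ} |x'(x)|$ with the solidness of $U^\circ$ and with the estimate $|x'(x_\alpha)| \le |x'|(x_\alpha)$, reducing the supremum from all of $U^\circ$ to the positive cone. Once this reduction to a supremum over a compact set of positive functionals is in place, Dini's theorem closes the argument in a single stroke.
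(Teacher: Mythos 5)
The paper does not actually prove this statement: it is quoted as \cite[Theorem 3.54]{AliBur1985} and used as a black box in the proof of Lemma \ref{totally-bdd-domi}, so there is no internal proof to compare yours against; I can only judge your argument on its own terms, and it is correct. The implication (i) $\Rightarrow$ (ii) is exactly right: Corollary \ref{consist-dual-sys} makes $|\sigma|(X,X')$ a consistent locally convex-solid topology, its order continuity gives $|x'|(x_\alpha)\to 0$ for $x_\alpha\downarrow 0$, and $|x'(x_\alpha)|\le|x'|(x_\alpha)$ finishes. For (ii) $\Rightarrow$ (i), your key identity is sound: $U=\{x\colon p(x)\le 1\}$ is $\tau$-closed, convex and balanced, hence $\sigma(X,X')$-closed by consistency, so the bipolar theorem gives $p(x)=\sup_{x'\in U^\circ}|x'(x)|$; solidity of $U$ passes to $U^\circ$ (if $|y'|\le|x'|$ with $x'\in U^\circ$ and $x\in U$, then $|y'(x)|\le|y'|(|x|)\le|x'|(|x|)=\sup\{|x'(u)|\colon |u|\le|x|\}\le 1$); and since $|x'|\in U^\circ\cap(X')^+=:A$ with $|x'(x)|\le|x'|(x)$ for $x\ge 0$, the supremum reduces to $A$. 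The set $A$ is $\sigma(X',X)$-compact as a weak-star closed subset of the Alaoglu-compact polar $U^\circ$ (compact in $X'$ because $\tau$ is consistent), and the Dini argument for nets goes through using only that each $y'\in A$ satisfies $y'(x_\alpha)\to 0$, which is (ii) applied directly --- your aside about $|x'|$ inheriting order continuity is not actually needed. Two points to tighten: a locally convex-solid topology is generated by a \emph{family} of Riesz seminorms, so the argument must be run for each seminorm in a generating family rather than for a single $p$; and since the paper defines order continuity via $o$-convergence, you should record that the decreasing-net case suffices, because $x_\alpha\xrightarrow{o}0$ gives $|x_\alpha|\le y_\beta\downarrow 0$ eventually and Riesz seminorms are monotone.
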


Viewing the pre-Riesz space $X$ as an order dense subspace of its Riesz completion $(X^\rho, i)$. Our next goal is to establish the fact that if a positive operator from $X$ to $X$ is dominated by a compact operator, then it is continuous from the $r$-topology of $|\sigma|(X^\rho, (X^\rho)')$ to the norm topology.
We will use $x_\alpha\xrightarrow{w}x$ to denote that $x_\alpha$ converges to $x$ in $X$ with respect to the weak topology, i.e. the $\sigma(X, X')$-topology. See the following lemma.

\begin{lemma}\label{com-dom-continuous}
Let $X$ be an  Archimedean pre-Riesz space endowed with a monotone norm $\|\cdot\|_X$, $S, T\colon X\rightarrow X$ bounded linear operators satisfying $0\le S\le T$ with $T$  compact. Let $(X^\rho, i)$ be the Riesz completion of $X$ with the seminorm $\|\cdot\|_{X^\rho}$ given by \eqref{regul-ext}. Assume that there exists a positive compact operator $\widehat{T}\colon X^\rho\rightarrow X^\rho$ such that $\widehat{T}\circ i=i\circ T$. 
Then $S$ is continuous for the $r$-topology of $|\sigma|(X^\rho, (X^\rho)')$ on norm bounded subsets of  $X$ to the norm topology.
\end{lemma}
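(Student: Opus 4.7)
The overall strategy is to use the compactness of $\widehat T$ on the vector lattice $X^{\rho}$ to obtain a $|\sigma|(X^{\rho},(X^{\rho})')$-to-norm continuity statement, and then transfer it to $S$ by way of the pointwise domination $0\le S\le T$. First, I record the standard Banach-space fact that any compact linear operator maps bounded weakly convergent nets to norm convergent nets; applied to $\widehat T$ and combined with the fact that $|\sigma|(X^{\rho},(X^{\rho})')$ is finer than $\sigma(X^{\rho},(X^{\rho})')$, this shows that $\widehat T\colon X^{\rho}\to X^{\rho}$ is continuous from $|\sigma|(X^{\rho},(X^{\rho})')$ to the norm topology on norm-bounded subsets. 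Then I fix a norm-bounded net $(x_{\alpha})$ in $X$ with $i(x_{\alpha})\to i(x)$ in this $r$-topology, and (by replacing $x_{\alpha}$ with $x_{\alpha}-x$) reduce to the case $i(x_{\alpha})\to 0$. Because $\|\cdot\|_{X}$ is monotone, Theorem \ref{norm-extension}(ii) makes $\|\cdot\|_{X}$ equivalent to $\|i(\cdot)\|_{X^{\rho}}$ on $X$, so it suffices to prove $\|i(Sx_{\alpha})\|_{X^{\rho}}\to 0$.

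The core is the following domination inequality in $X^{\rho}$: for every $y\in X$ and every $u\in X$ with $-u\le y\le u$, both $u+y$ and $u-y$ lie in $X^{+}$, hence $0\le S(u\pm y)\le T(u\pm y)$, and writing $Sy=\tfrac{1}{2}\bigl(S(u+y)-S(u-y)\bigr)$ yields, after applying $i$ and using that $|\cdot|$ is subadditive in the vector lattice $X^{\rho}$,
\[
|i(Sy)|\le\tfrac{1}{2}\bigl(i(S(u+y))+i(S(u-y))\bigr)=i(Su)\le i(Tu)=\widehat T(i(u)).
\]
Since $\|\cdot\|_{X^{\rho}}$ is a Riesz seminorm (regular seminorms on a vector lattice coincide with Riesz seminorms), this gives $\|i(Sy)\|_{X^{\rho}}\le \|\widehat T(i(u))\|_{X^{\rho}}$ for every such $u$. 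The plan is now to produce, for each $\alpha$, an element $u_{\alpha}\in X$ with $-u_{\alpha}\le x_{\alpha}\le u_{\alpha}$ such that $(\|u_{\alpha}\|_{X})$ remains bounded and $i(u_{\alpha})\to 0$ in $|\sigma|(X^{\rho},(X^{\rho})')$; then the first step gives $\widehat T(i(u_{\alpha}))\to 0$ in norm, and the inequality above forces $\|i(Sx_{\alpha})\|_{X^{\rho}}\to 0$.

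The hard part is the construction of the auxiliary net $(u_{\alpha})$. The locally convex-solid character of $|\sigma|$ ensures that $|i(x_{\alpha})|\to 0$ in $|\sigma|(X^{\rho},(X^{\rho})')$ and that $(|i(x_{\alpha})|)$ stays norm-bounded in $X^{\rho}$, so what I really need is an $|\sigma|$-approximation of $|i(x_{\alpha})|$ from above by an element of $i(X)$. The description $|i(y)|=\inf\{i(u)\colon u\in X,\,-u\le y\le u\}$, which follows from order denseness of $i(X)$ in $X^{\rho}$, supplies downward-directed approximants $i(u)\ge |i(x_{\alpha})|$; combining this with the seminorms $p_{f}(\cdot)=|f|(|\cdot|)$ that generate $|\sigma|$ and a standard diagonal selection along the index set, one extracts the required net $(u_{\alpha})$. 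This selection step is where the interplay between order density in $X^{\rho}$ and the absolute weak topology must be handled carefully, and is the main technical obstacle; once it is in place the inequality above finishes the proof.
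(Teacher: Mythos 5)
Your overall architecture agrees with the paper's: use compactness of $\widehat{T}$ to convert (absolute) weak null convergence of a norm-bounded net into norm convergence of its image, and transfer this to $S$ through the domination $0\le S\le T$ together with the estimate $\|x\|_X\le 2\|i(x)\|_{X^\rho}$ from Theorem \ref{norm-extension}(ii). The first and last steps of your outline are sound. The problem is the middle step, which you yourself flag as ``the main technical obstacle'' and do not carry out: the construction, for a norm-bounded net $(x_\alpha)$ with $i(x_\alpha)\to 0$ in $|\sigma|(X^\rho,(X^\rho)')$, of majorants $u_\alpha\in X$ with $-u_\alpha\le x_\alpha\le u_\alpha$, with $(\|u_\alpha\|_X)$ bounded and $i(u_\alpha)\to 0$ in $|\sigma|$ (or even just weakly). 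Order denseness gives only the order-theoretic identity $|i(x_\alpha)|=\inf\{i(u)\colon u\in X,\ -u\le x_\alpha\le u\}$; to extract from it a majorant $i(u)$ with $|f|(i(u))$ close to $|f|(|i(x_\alpha)|)$ for a prescribed $f\in(X^\rho)'$ you would need $|f|$ to act order continuously on that majorant set and you would need the set to be downward directed, and neither is available under the hypotheses (the lemma assumes no order continuity of the norm and no Riesz decomposition property). So the proposed ``standard diagonal selection'' does not exist in general, and the proof has a genuine gap at its central point.

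The paper avoids this obstacle by never leaving $X^\rho$: it feeds $|i(x_\alpha)|$ itself --- an element of the vector lattice $X^\rho$ which is norm bounded and weakly null because $|f(|i(x_\alpha)|)|\le|f|(|i(x_\alpha)|)\to 0$ --- directly into the compact operator $\widehat{T}$, obtains $\|\widehat{T}(|i(x_\alpha)|)\|_{X^\rho}\to 0$ by a subnet argument, and then estimates $\|Sx_\alpha\|_X\le 2\|\widehat{T}(|i(x_\alpha)|)\|_{X^\rho}$ via the domination. The price it pays is that the last chain of inequalities passes through an extension $\widehat{S}$ of $S$ satisfying $|\widehat{S}z|\le\widehat{S}|z|\le\widehat{T}|z|$, which your version, to its credit, tries to avoid by only ever applying $S$ to elements of $X$; but the cost of that avoidance is precisely the unconstructible net $(u_\alpha)$. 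If you want to repair your argument along the paper's lines, replace the auxiliary net $(i(u_\alpha))$ by $(|i(x_\alpha)|)$ and rework your key inequality so that it bounds $|i(Sx_\alpha)|$ by something applied to $|i(x_\alpha)|$ rather than to a majorant from $i(X)$.
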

\begin{proof}
Let $(x_\alpha)_{\alpha}$ be a norm bounded net in $X$ with $x_\alpha\xrightarrow{r} 0$. It is enough to show that $\|S(x_\alpha)\|_X\rightarrow 0$ holds. 
To this end, from assumption of $x_\alpha\xrightarrow{r} 0$ in $X$ it follows that 
  $i(x_\alpha) \xrightarrow{|\sigma|(X^\rho, (X^\rho)')} 0$ in $X^\rho$, which means that for every $f\in (X^\rho)'$, we have $|f||i(x_\alpha)|\rightarrow 0$. It follows from $0\le |f(|i(x_\alpha)|)|\le |f||i(x_\alpha)|$ that $|i(x_\alpha)|\xrightarrow{w}0$ in $X^\rho$.

Since $\widehat{T}$ is compact and $(|i(x_\alpha)|)_\alpha$ is norm bounded, $\left(\widehat{T}(|i(x_\alpha)|)\right)_\alpha$ has a norm convergent subnet $\left(\widehat{T}y_\beta\right)_\beta$ such that $\widehat{T}y_\beta\xrightarrow{\|\cdot\|_{X^\rho}}z$ for some $z\in X^\rho$, and then $\widehat{T}y_\beta\xrightarrow{w}z$. 
As for every $f\in (X^\rho)'$ we have $\langle f, \widehat{T}(|i(x_\alpha)|)\rangle= \langle \widehat{T}^*f, |i(x_\alpha)|\rangle\rightarrow 0$. So $\widehat{T}(|i(x_\alpha)|)\xrightarrow{w}0$ and then $z=0$. Hence $\left(\widehat{T}(|i(x_\alpha)|)\right)_\alpha$ has a subnet which converges in norm to $0$. 
Similarly, every subnet of $\left(\widehat{T}(|i(x_\alpha)|)\right)_\alpha$ has a subnet that norm converges to $0$. Therefore, $\left\|\widehat{T}(|i(x_\alpha)|)\right \|_{X^\rho}\rightarrow 0$.

Since $0\le S\le T$ and $i$ is positive, we have $0\le \widehat{S}\circ i\le \widehat{T}\circ i$. According to Theorem \ref{norm-extension} (ii) we have that $\|x\|_X\le 2\|i(x)\|_{X^\rho}$ for every $x\in X$. Thus, we have 
\[\|S(x_\alpha)\|_X\le 2\left\|(\widehat{S}\circ i)(x_\alpha)\right\|_{X^\rho}\le 2\left\|\widehat{S}(|i(x_\alpha)|)\right\|_{X^\rho}\le 2\left\|\widehat{T}(|i(x_\alpha)|)\right\|_{X^\rho}\rightarrow 0.\]
Hence $S$ is continuous. 
\end{proof}

\hide{
Let $\{x_\alpha\}$ be a norm bounded net in $X$ and $x\in X$ with $x_\alpha\xrightarrow{r} x$. It is enough to show that $\|S(x_\alpha-x)\|\rightarrow 0$ holds. To this end, suppose $x_\alpha\xrightarrow{r} x$ in $X$,  then  $i(x_\alpha) \xrightarrow{|\sigma|(X^\rho, (X^\rho)')} i(x)$ in $X^\rho$, which means that for every $f\in (X^\rho)'$, we have $|f|(|i(x_\alpha)-i(x)|)\rightarrow 0$.

According to Theorem \ref{norm-extension}, we have that $\|x\|_X\le 2\|i(x)\|_{X^\rho}$ for every $x\in X$. Hence every $g\in X'$ is also continuous with respect to $\|i(\cdot)\|_{X^\rho}$ on $X$.
For $g\in X'$, by the Hahn-Banach Theorem \ref{Hahn-Banach}, there exists $\widehat{g}\in (X^\rho)'$ such that $\widehat{g}\circ i=g$. Then
\[|g(x_\alpha-x)|=|\widehat{g}(i(x_\alpha)-i(x))|\le \|\widehat{g}\||i(x_\alpha)-i(x)|.\]
So $x_\alpha\xrightarrow{w} x$ in $X$.
Since $T$ is compact and $\{x_\alpha\}$ is norm bounded, $\{Tx_\alpha\}$ has a norm convergent subnet $\{Ty_\beta\}$ such that $Ty_\beta\xrightarrow{\|\cdot\|}z$, and then $Ty_\beta\xrightarrow{w}z$. 
Because of $Tx_\alpha\xrightarrow{w}Tx$, we have that $z=Tx$. Hence $\{Tx_\alpha\}$ has a subnet which converges in norm to $Tx$. 
Similarly, every subnet of $\{Tx_\alpha\}$ has a subnet that norm converges to $Tx$. Therefore, $\|Tx_\alpha-Tx\|\rightarrow 0$.

Let $\langle X, X'\rangle$ be the dual system, then $\langle Sx, f\rangle= \langle x, S^*f\rangle$ for $x\in X$ and $f\in X'$. For $f\in (X')^+$, since $0\le S\le T$, we have  $0\le S^*f\le T^*f$. Let $h:= T^*f-S^*f$, then $h\colon X\rightarrow \R$ is positive linear. By the Kantorovich Theorem \ref{Kantorovich}, one can extend $h$ to a positive linear function $\widehat{h}\colon X^\rho\rightarrow \R$. For $S^*f\colon X\rightarrow \R$, by the Hahn-Banach Theorem \ref{Hahn-Banach}, there exists $\widehat{S^*f}\colon X^\rho \rightarrow \R$, namely $\widehat{S^*f}\in (X^\rho)'$. Then $\widehat{S^*f}+\widehat{h}\ge \widehat{S^*f}$. Define $\widehat{T^*f}:=\widehat{S^*f}+\widehat{h}$, then we have
\begin{align*}
|\langle S(x_\alpha-x), f\rangle|&=|\langle x_\alpha-x, S^*f\rangle|\\
&=|\langle i(x_\alpha)-i(x), \widehat{S^*f}\rangle|\\
&\le |\langle i(x_\alpha)-i(x), \widehat{T^*f}\rangle|\\
&=|\langle i(x_\alpha)-i(x), \widehat{S^*f}+\widehat{h}\rangle|\\
&=|\langle x_\alpha-x, S^*f+h\rangle|\\
&=|\langle x_\alpha-x, S^*f+ T^*f-S^*f\rangle|\\
&=|\langle x_\alpha-x, T^*f\rangle|\\
&=|\langle T(x_\alpha-x), f\rangle|\\
&\le \|Tx_\alpha-Tx\| \cdot \|f\|.
\end{align*}
Recall that Krein's lemma says that, if $f\in X'$ with $\|f\|\le 1$, then there exist $f_1, f_2\in (X')^+$ with $\|f_1\|\le 1$, $\|f_2\|\le 1$ such that $f=f_1-f_2$.
So we have 
\begin{align*}
\|S(x_\alpha-x)\|&=\sup_{f\in X', \|f\|\le 1}|\langle S(x_\alpha-x), f\rangle|\\
&\le |\langle S(x_\alpha-x), f_1\rangle| + |\langle S(x_\alpha-x), f_2\rangle|\\
&\le |\langle T(x_\alpha-x), f_1\rangle| + |\langle T(x_\alpha-x), f_2\rangle|\\
&\le 2\|Tx_\alpha-Tx\|,
\end{align*}
and therefore $\|S(x_\alpha-x)\|=\sup_{f\in X', \|f\|\le 1}|\langle S(x_\alpha-x), f\rangle| \le 2\|Tx_\alpha-Tx\|\rightarrow 0$.

So on every norm bounded set of $X$, $S$ is continuous with respect to the restriction topology  of $|\sigma|(X^\rho, (X^\rho)')$ to the norm topology.

Let $j\colon Y\rightarrow Y^\rho$ be a bipositive linear map such that $j(Y)$ is order dense in $Y^\rho$. For a given map $S\colon X\rightarrow Y$, let $\widehat{S}:=j\circ S$. Then $\widehat{S}\colon X\rightarrow Y^\rho$. 
\[
\xymatrix{
 X \ar@{->}[drr]^{\widehat{S}} \ar@{->}[rr]^S & &Y\ar@{->}[d]^{j}\\
 &  &Y^{\rho}}
\]

}

We continue with a approximation property on the Riesz completion of a pre-Riesz space with respect to the regular norm.

\begin{lemma}\label{lemma 5.1}
Let  $X, Y$ be two normed pre-Riesz spaces with a regular norm on $Y$, and $S,T\colon X\rightarrow Y$ such that $0\le S\le T$. Let $(Y^\rho, i)$ be the Riesz completion of $Y$.
If $T$ sends a subset $A$ of $X^+$ to a norm totally bounded set, then for each $\epsilon > 0$ there exists some $u\in (Y^\rho)^+$ such that for all $x\in A$ we have
\[\left\|(i(Sx)-u)^+\right\|_{Y^\rho}\le \epsilon.\]
\end{lemma}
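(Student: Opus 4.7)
The plan is to imitate the classical finite-covering argument from the Banach lattice proof of the analogous Dodds--Fremlin auxiliary lemma (the style used inside the proof of Theorem \ref{Theorem 5.20AliBur1985}), but to form the required supremum inside the Riesz completion $Y^\rho$ rather than in $Y$, which in general does not admit it. Throughout, $\|\cdot\|_{Y^\rho}$ denotes the regular extension of $\|\cdot\|_Y$ defined by formula \eqref{regul-ext}, applied with the pre-Riesz space $Y$ and its embedding $i\colon Y\to Y^\rho$ in place of $(X,i)$.

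First I would fix $\epsilon>0$ and use the norm total boundedness of $T[A]$ in $Y$ to pick $x_1,\ldots,x_n\in A$ such that for every $x\in A$ there exists $k\in\{1,\ldots,n\}$ with $\|Tx-Tx_k\|_Y<\epsilon$. Since $A\subseteq X^+$ and $T\ge 0$, each $Tx_k$ lies in $Y^+$, so I can set
\[u:=\bigvee_{k=1}^{n}i(Tx_k)\in (Y^\rho)^+.\]
For arbitrary $x\in A$, fixing an index $k$ with $\|Tx-Tx_k\|_Y<\epsilon$, the hypotheses $0\le S\le T$, bipositivity of $i$, and the bound $u\ge i(Tx_k)$ yield
\[i(Sx)-u\le i(Tx)-i(Tx_k)=i(Tx-Tx_k),\]
and hence $(i(Sx)-u)^+\le (i(Tx-Tx_k))^+$ in $Y^\rho$.

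It then remains to turn this lattice inequality into a norm estimate. By Theorem \ref{norm-extension}(i), $\|\cdot\|_{Y^\rho}$ is the greatest regular seminorm on the vector lattice $Y^\rho$, and on vector lattices regular seminorms coincide with Riesz seminorms, so $\|\cdot\|_{Y^\rho}$ is a Riesz seminorm. Consequently $\|z^+\|_{Y^\rho}\le \|z\|_{Y^\rho}$ for every $z\in Y^\rho$ and the seminorm is monotone on the positive cone. Combining these with the regularity of $\|\cdot\|_Y$ and Theorem \ref{norm-extension}(iii), which gives $\|i(y)\|_{Y^\rho}=\|y\|_Y$ for all $y\in Y$, I obtain
\[\|(i(Sx)-u)^+\|_{Y^\rho}\le \|(i(Tx-Tx_k))^+\|_{Y^\rho}\le \|i(Tx-Tx_k)\|_{Y^\rho}=\|Tx-Tx_k\|_Y<\epsilon,\]
as required. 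I do not anticipate any serious obstacle: the essential point is to have $\|\cdot\|_{Y^\rho}$ available as a genuine Riesz seminorm so that the pointwise lattice inequality translates into a norm bound, and this is precisely what Theorem \ref{norm-extension} combined with the Riesz/regular equivalence on vector lattices delivers.
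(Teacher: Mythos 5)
Your proposal is correct and follows essentially the same route as the paper's proof: the same finite $\epsilon$-net for $T[A]$, an upper bound $u$ for the $i(Tx_k)$ (the paper takes $u=i\bigl(T\sum_k x_k\bigr)$ where you take $\bigvee_k i(Tx_k)$ — an immaterial difference, both dominate each $i(Tx_k)$), the same pointwise lattice inequality $(i(Sx)-u)^+\le (i(Tx-Tx_k))^+$, and the same norm estimate via monotonicity of the regular (Riesz) seminorm on $Y^\rho$ together with $\|i(y)\|_{Y^\rho}=\|y\|_Y$ from regularity of $\|\cdot\|_Y$. No gaps.
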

\begin{proof}
By Theorem \ref{norm-extension} one can define the seminorm on $Y^\rho$ by 
 \[\|y\|_{Y^\rho}:=\inf\{\|x\|_Y\colon \, x\in Y, -i(x)\leq y\leq i(x)\}, \,  y\in Y^\rho.\]
As $\|\cdot\|_Y$ is regular, it is obvious that $\|i(y)\|_{Y^\rho}= \|y\|_Y$ for $y\in Y$.
Let $\epsilon>0$. Since $T$ sends a subset $A$ of $X^+$ to a norm totally bounded set of $Y$,  there exist $x_1,\ldots, x_n\in A$ such that for all $x\in A$ we have $\|Tx-Tx_j\|_Y<\epsilon$ for some $j$.
Put $u=(i\circ T)\left(\sum_{j=1}^n x_j\right)\in (Y^\rho)^+$, then for $x\in A$ and $j$ with $\|Tx-Tx_j\|_Y<\epsilon$ we have
\begin{align*}
0\le (i(Sx)-u)^+
&=\left(i\bigl(Sx-T\sum_{j=1}^n x_j\bigr)\right)^+\\
&\le \left(i \bigl(Tx-T\sum_{j=1}^n x_j\bigr)\right)^+ \\
& \le \bigl(i (Tx-Tx_j)\bigr)^+\\
& \le \left|\bigl(i (Tx-Tx_j)\bigr)^+\right|.
\end{align*}
Since regular norms are monotone we have 
\begin{align*}
0\le \left\|(i(Sx)-u)^+\right\|_{Y^\rho}
&\le \left\| |i(Tx-Tx_j)| \right\|_{Y^\rho}\\
&=\left\|i(Tx-Tx_j) \right\|_{Y^\rho}\\
&=\left \|Tx-Tx_j\right\|_Y\le \epsilon.
\end{align*}
Thus we have completed the proof.
\end{proof}

Next, we will show that for every Archimedean Riesz space $X$ with an order unit $u$ and an order unit norm $\|\cdot\|_u$, $\langle X'', X'\rangle$ is a Riesz dual system. To this end we need to show that $X'$ is an ideal of $(X'')^\sim$ and  use the theory of AL-spaces and AM-spaces.

The next lemma can be found in \cite[Proposition 1.4.7]{Mey1991}.

\begin{lemma}\label{X'-ALspace}
Let $X$ be an Archimedean Riesz space with an order unit $u$ and an order unit norm $\|\cdot\|_u$. Then $(X', \|\cdot\|)$ is an AL-space.
\end{lemma}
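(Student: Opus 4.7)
The plan is to prove this by reducing to the standard fact that the order-unit norm makes $X$ an AM-space with unit, and then showing that every positive functional on $X$ has norm equal to its value at $u$; the AL-property of $X'$ will fall out immediately.

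First I would record the basic features of the order-unit norm. By definition $\|x\|_u = \inf\{\lambda > 0 : -\lambda u \le x \le \lambda u\}$, so $\|u\|_u = 1$ and the closed unit ball of $(X, \|\cdot\|_u)$ is exactly the order interval $[-u, u]$ (here Archimedeanness ensures this is a genuine norm, not merely a seminorm). This makes $(X, \|\cdot\|_u)$ a normed Riesz space; in fact it is an AM-space with unit, since for $x, y \in X^+$ one has $\|x \vee y\|_u = \max\{\|x\|_u, \|y\|_u\}$ (both inequalities are immediate from the description of the unit ball together with monotonicity of $\|\cdot\|_u$).

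Next I would invoke the standard fact that the topological dual $X'$ of any normed Riesz space, equipped with the dual norm $\|f\| = \sup\{|f(x)| : \|x\|_u \le 1\}$, is a (Dedekind complete) Banach lattice. So the only thing left is to verify the AL-space identity $\|f + g\| = \|f\| + \|g\|$ for disjoint $f, g \in (X')^+$. The key observation is the formula $\|f\| = f(u)$ for every $f \in (X')^+$. Indeed, if $\|x\|_u \le 1$ then $-u \le x \le u$, so by positivity of $f$ we get $|f(x)| \le f(u)$, giving $\|f\| \le f(u)$; and the reverse inequality $\|f\| \ge f(u)$ holds simply because $\|u\|_u = 1$.

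Using this identity, for any $f, g \in (X')^+$ (disjoint or not) one has
\[
\|f + g\| = (f + g)(u) = f(u) + g(u) = \|f\| + \|g\|,
\]
which in particular establishes the AL-space property on pairs of disjoint positive elements. I do not foresee a real obstacle here: the entire argument rests on the characterisation of the unit ball of the order-unit norm as $[-u, u]$ together with the fact that this ball has a largest element, which forces $\|f\| = f(u)$ for positive $f$. The only point requiring a line of care is the verification that $\|\cdot\|_u$ is actually a norm, which uses the Archimedean hypothesis on $X$.
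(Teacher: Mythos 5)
Your argument is correct. Note that the paper does not prove this lemma at all --- it simply cites \cite[Proposition 1.4.7]{Mey1991} --- and your proof is precisely the standard one found there: Archimedeanness identifies the unit ball with $[-u,u]$, hence $\|f\|=f(u)$ for every $f\in (X')^+$, and additivity of the dual norm on the positive cone (which is even stronger than additivity on disjoint positive pairs) follows at once. The only points that deserve the care you already give them are that $\|\cdot\|_u$ is a genuine norm (Archimedean hypothesis) and that $X'$, as the norm dual of a normed Riesz space, is a Banach lattice, so that the AL-space definition applies.
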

The following result is from \cite[Corollary 3.7]{abramovich2002invitation}.

\begin{corollary}\label{Cor3.7abramovich2002invitation}
Every AL-space has order continuous norm.
\end{corollary}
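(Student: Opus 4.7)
The plan is to invoke Kakutani's representation theorem for AL-spaces, which asserts that every AL-space $X$ is lattice isometrically isomorphic to an $L^{1}(\mu)$ space for some (possibly non-$\sigma$-finite) measure space $(\Omega,\Sigma,\mu)$. Under such an isomorphism, order continuity of the norm on $X$ is equivalent to order continuity of the standard norm on $L^{1}(\mu)$, so the problem reduces to a classical measure-theoretic fact.

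To verify order continuity of $\|\cdot\|_{1}$ on $L^{1}(\mu)$, let $(f_\alpha)_{\alpha\in I}$ be a net with $f_\alpha\downarrow 0$. Then $(\|f_\alpha\|_{1})_{\alpha\in I}$ is decreasing and bounded below by $0$, so it converges to some $c\ge 0$; the goal is to show $c=0$. Fix $\alpha_{0}\in I$ and restrict to the tail $\alpha\ge\alpha_{0}$, so that $0\le f_\alpha\le f_{\alpha_{0}}\in L^{1}(\mu)$. Choose indices $\alpha_{0}\le\alpha_{1}\le\alpha_{2}\le\cdots$ with $\|f_{\alpha_{n}}\|_{1}\to c$. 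The pointwise infimum $g:=\inf_{n}f_{\alpha_{n}}$ exists $\mu$-a.e.\ and is dominated by the integrable function $f_{\alpha_{0}}$, so dominated convergence applied to $f_{\alpha_{n}}\downarrow g$ yields $\|g\|_{1}=c$. A cofinal refinement argument—repeatedly enlarging the chosen sequence so that it remains cofinal in $I$—then shows that $g$ is a lower bound for the entire net $(f_\alpha)_{\alpha\in I}$. Since $\inf_\alpha f_\alpha=0$ by hypothesis and $g\ge 0$, we conclude $g=0$ and hence $c=0$.

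The main obstacle is the passage from a general net to a countable cofinal sequence whose pointwise infimum captures the order-theoretic infimum of the whole net; this is delicate when $\mu$ is not $\sigma$-finite. It is handled by first localizing to the principal band in $L^{1}(\mu)$ generated by $f_{\alpha_{0}}$, which sits over a $\sigma$-finite portion of $(\Omega,\Sigma,\mu)$, so that the sequence-based dominated convergence argument applies inside that band without qualification. An alternative route that bypasses Kakutani is to first prove directly that the norm on an AL-space is additive on the positive cone (extending the AL-axiom from disjoint positives to arbitrary positives via the Riesz decomposition property), observe that this makes $x\mapsto\|x^{+}\|-\|x^{-}\|$ a strictly positive linear functional, and then deduce order continuity from norm completeness and closedness of the positive cone by a Cauchy-sequence argument on the tails of a net $x_\alpha\downarrow 0$.
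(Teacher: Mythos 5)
The paper offers no proof of this corollary at all: it is imported verbatim as Corollary 3.7 of Abramovich and Aliprantis, so your proposal is being measured against a bare citation rather than an argument. Of the two routes you sketch, the second (additivity of the norm on the positive cone) is the standard textbook proof and is essentially complete: once $\|x+y\|=\|x\|+\|y\|$ holds for all $x,y\in X^{+}$, a net $x_\alpha\downarrow 0$ satisfies $\|x_\beta-x_\alpha\|=\|x_\beta\|-\|x_\alpha\|$ for $\alpha\ge\beta$, so it is norm Cauchy; by completeness it has a norm limit, which lies in the (automatically closed) positive cone and is a lower bound of the net, hence equals $0$, whence $\|x_\alpha\|\to 0$. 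I would write up that version, since it avoids Kakutani entirely and needs no net-to-sequence reduction.

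The first route does contain a genuine soft spot, and it is exactly the step you flag as delicate. A countable sequence can in general never be made cofinal in an arbitrary directed index set, so ``repeatedly enlarging the chosen sequence so that it remains cofinal in $I$'' is not an available move; moreover the function $g$ obtained from a norm-minimizing subsequence is not automatically a lower bound of the whole net. The correct repair is the one your localization already sets up: the band generated by $f_{\alpha_0}$ is an $L^{1}$ over a $\sigma$-finite measure and therefore has the countable sup property, so the downward directed tail, having infimum $0$, contains a countable subfamily with infimum $0$; directedness of the net then yields a decreasing sequence $g_n\downarrow 0$ chosen from the net itself, dominated convergence gives $\|g_n\|_{1}\to 0$, and monotonicity of the net of norms forces $c=0$. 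With that substitution (countable sup property in place of cofinality) the Kakutani route is also sound.
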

The next result is due to Nakano, see \cite[Theorem 4.9]{AliBur1985}.
\begin{theorem} \label{Th4.9AliBur1985}
For a Banach lattice $X$ the following statements are equivalent.
\begin{itemize}
\item[(i)] $X$ has order continuous norm.
\item[(ii)] $X$ is an ideal of $X''$.
\end{itemize}
\end{theorem}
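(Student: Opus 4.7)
The plan is to prove the equivalence in two directions, using standard characterizations from Banach lattice theory and the canonical embedding $\iota\colon X\to X''$.

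For the direction $(i)\Rightarrow(ii)$, I would first invoke the classical equivalence (due to Ogasawara/Nakano) that a Banach lattice has order continuous norm if and only if every order interval $[0,x]$ in $X$ is weakly compact. Fix $y\in (X'')^+$ with $y\le \iota(x)$ for some $x\in X^+$; to show $X$ is an ideal in $X''$ it suffices to locate $y$ inside $\iota(X)$. By a lattice-theoretic strengthening of Goldstine's theorem, $\iota([0,x]_X)$ is $\sigma(X'',X')$-dense in $[0,\iota(x)]_{X''}$, so one produces a net $(x_\beta)\subset [0,x]_X$ with $\iota(x_\beta)\to y$ in $\sigma(X'',X')$. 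Because the weak topology on $X$ is the restriction of $\sigma(X'',X')$ to $\iota(X)$, and $[0,x]_X$ is weakly compact by the assumption, $\iota([0,x]_X)$ is $\sigma(X'',X')$-closed in $X''$. Thus $y\in \iota([0,x]_X)\subset \iota(X)$, as required.

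For the direction $(ii)\Rightarrow(i)$, I would argue by contraposition. Suppose $X$ is an ideal of $X''$ but the norm fails to be order continuous, so there exists a net $0\le x_\alpha\downarrow 0$ in $X$ with $\|x_\alpha\|\ge c>0$ for all $\alpha$. Since $X''$ is Dedekind complete and the decreasing net $(\iota(x_\alpha))$ is bounded below by $0$, the infimum $y:=\inf_\alpha \iota(x_\alpha)$ exists in $X''$ and satisfies $0\le y\le \iota(x_{\alpha_0})$. The ideal hypothesis gives $y=\iota(x_0)$ for some $x_0\in X^+$; using bipositivity of $\iota$ together with order density of $\iota(X)$ in $X''$, one identifies $x_0$ with $\inf_\alpha x_\alpha = 0$, hence $y=0$ in $X''$. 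Now choose $f_\alpha\in (X')^+$ with $\|f_\alpha\|\le 1$ and $f_\alpha(x_\alpha)\ge c/2$; by Banach-Alaoglu, pass to a $\sigma(X',X)$-accumulation point $f\in (X')^+$. Evaluating against the ordered structure on $X''$ and combining with the monotonicity of $\alpha\mapsto f(x_\alpha)$ should yield the contradiction $0 = f(y)\ge c/2$.

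The main obstacle I anticipate is the final step of $(ii)\Rightarrow(i)$: translating the order statement $\inf_\alpha \iota(x_\alpha)=0$ in $X''$ into the norm statement $\|x_\alpha\|\to 0$ in $X$. Producing a single positive functional $f$ that simultaneously witnesses a uniform lower bound and coupling it with the vanishing infimum requires a careful Hahn-Banach / weak*-compactness argument, and one must verify that the accumulating functional remains positive with bounded norm. The direction $(i)\Rightarrow(ii)$ depends on the nontrivial characterization of order continuity via weak compactness of order intervals, which I would quote as a black box rather than reprove here.
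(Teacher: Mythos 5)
The paper offers no proof of this statement to compare yours against: it is Nakano's theorem, quoted as a known ingredient from \cite[Theorem 4.9]{AliBur1985}. Judged on its own, your outline follows the standard textbook route and is sound in substance, but three points need attention before it is a complete proof.

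First, in $(i)\Rightarrow(ii)$ the black box you invoke --- order continuity of the norm is equivalent to weak compactness of order intervals --- is itself one of the clauses of the same classical equivalence cluster (it appears alongside (i) and (ii) in the list of \cite[Theorem 4.9]{AliBur1985}), and in several standard treatments the implication ``order continuous norm $\Rightarrow$ weakly compact order intervals'' is \emph{deduced from} ``$X$ is an ideal of $X''$''. You must therefore cite or supply a proof of that implication which does not pass through (ii), otherwise the argument is circular; such proofs exist (Ogasawara's theorem), so this is repairable but cannot be waved away. Second, in $(ii)\Rightarrow(i)$ the appeal to ``order density of $\iota(X)$ in $X''$'' is both unnecessary and not valid in general (for instance, $C[0,1]$ is not order dense in its bidual: no strictly positive continuous function lies below the functional $\mu\mapsto\mu(\{0\})$). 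Bipositivity alone already gives what you need: from $\iota(x_0)\le\iota(x_\alpha)$ one gets $x_0\le x_\alpha$ for every $\alpha$, hence $0\le x_0\le\inf_\alpha x_\alpha=0$ and $y=0$. Third, the step you yourself flag as the main obstacle closes once you state explicitly that in $X''=(X')^\sim$ the infimum of a downward directed net of positive elements is computed pointwise on $(X')^+$ (the Riesz--Kantorovich formula), so that $y=0$ yields $\inf_\alpha f(x_\alpha)=y(f)=0$ for your accumulation point $f\ge 0$; this contradicts $f(x_{\alpha_0})\ge c/2$, which follows as you indicate from $f_{\alpha_\gamma}(x_{\alpha_0})\ge f_{\alpha_\gamma}(x_{\alpha_\gamma})\ge c/2$ for $\alpha_\gamma\ge\alpha_0$ and passage to the limit along the subnet. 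With these repairs the argument is complete.
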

We cite the following result by G. Birkhoff, see \cite[Corollary 4.5]{AliBur1985}.
\begin{corollary}\label{Cor4.5AliBur1985}
The norm dual of a Banach lattice $X$ coincides with its order dual, i.e., $X'=X^\sim$.
\end{corollary}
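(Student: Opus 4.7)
The plan is to prove the two inclusions $X'\subseteq X^\sim$ and $X^\sim\subseteq X'$ separately. The first is a direct consequence of the Banach lattice norm being monotone, while the second reduces, via the Riesz--Kantorovich decomposition of order bounded functionals, to showing that every positive linear functional on a Banach lattice is norm continuous; this in turn will follow from norm completeness by a standard contradiction argument involving an absolutely convergent series.

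For $X'\subseteq X^\sim$, I would take $f\in X'$ and verify that $f$ maps each order interval $[-x,x]$ with $x\in X^+$ to a bounded subset of $\R$. Since the lattice norm satisfies $\|y\|\le\|x\|$ whenever $|y|\le x$, one has $|f(y)|\le \|f\|\,\|y\|\le \|f\|\,\|x\|$ for every $y\in[-x,x]$, so $f$ is order bounded and lies in $X^\sim$.

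For $X^\sim\subseteq X'$, I would first apply the Riesz--Kantorovich theorem to write an arbitrary $f\in X^\sim$ as $f=f^+-f^-$ with $f^+,f^-\in (X^\sim)^+$, reducing the task to showing that every positive linear functional $g\colon X\to\R$ is norm continuous. Assume for contradiction that some such $g$ is discontinuous. Then there is a sequence $(x_n)\subseteq X$ with $\|x_n\|\to 0$ and $g(x_n)\to\infty$; replacing $x_n$ by $|x_n|$, which does not decrease $g(x_n)$ in absolute value (since $g$ is positive and $\pm x_n\le |x_n|$ yields $|g(x_n)|\le g(|x_n|)$), and passing to a subsequence, I may assume $x_n\in X^+$, $\|x_n\|\le 2^{-n}$, and $g(x_n)\ge n$. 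By norm completeness of $X$, the series $\sum_n x_n$ converges to some $x\in X$; since $X^+$ is norm closed and each partial sum is positive, $x\ge x_n$ for each $n$, whence $g(x)\ge g(x_n)\ge n$ for all $n\in\N$, a contradiction.

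The principal obstacle, and the only step where the Banach structure is used in an essential way, is the continuity of positive functionals. The monotonicity-based half is essentially bookkeeping, whereas the completeness of $X$ is indispensable in forming the absolutely convergent series $\sum_n x_n$; in a merely normed Riesz space the inclusion $X^\sim\subseteq X'$ may fail, so the argument cannot be bypassed.
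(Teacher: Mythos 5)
Your proof is correct. Note that the paper does not prove this statement at all: it is quoted as a classical result of Birkhoff and cited from \cite[Corollary 4.5]{AliBur1985}, so there is no internal argument to compare against. What you have written is essentially the standard textbook proof from that reference: the inclusion $X'\subseteq X^\sim$ via monotonicity of the lattice norm, and the inclusion $X^\sim\subseteq X'$ via the Riesz--Kantorovich decomposition $f=f^+-f^-$ together with the automatic norm continuity of positive functionals on a Banach lattice, established by the absolutely convergent series $\sum_n x_n$ with $\|x_n\|\le 2^{-n}$ and $g(x_n)\ge n$, using norm closedness of $X^+$ to conclude $x\ge x_n$ and derive a contradiction. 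All steps are sound, and your closing remark correctly identifies completeness as the essential hypothesis (the inclusion $X^\sim\subseteq X'$ can indeed fail in a non-complete normed Riesz space).
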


\begin{lemma}\label{Riesz-dual-sys}
Let $X$ be an Archimedean Riesz space with an order unit norm. Then $\langle X'', X'\rangle$ is a Riesz dual system.
\end{lemma}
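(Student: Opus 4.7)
The plan is to chain the cited results about AL-spaces, order continuous norms, and biduals of Banach lattices in order to identify $X'$ as an ideal inside $(X'')^\sim$, and then to verify the remaining pieces of the definition of a Riesz dual system by hand.

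First, I would invoke Lemma \ref{X'-ALspace} to conclude that the norm dual $(X', \|\cdot\|)$ is an AL-space. By Corollary \ref{Cor3.7abramovich2002invitation}, every AL-space has order continuous norm, so in particular $X'$ is a Banach lattice with order continuous norm. Applying Nakano's Theorem \ref{Th4.9AliBur1985} to the Banach lattice $X'$ then yields that $X'$ sits as an ideal inside its bidual $(X')''$.

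Next I would identify $(X')''$ with $(X'')^\sim$. By definition $(X')'' = (X'')'$ as the norm dual of $X''$. Since $X''$ is itself a Banach lattice (being the norm dual of the Banach lattice $X'$), Birkhoff's Corollary \ref{Cor4.5AliBur1985} gives $(X'')' = (X'')^\sim$. Hence $X'$ is an ideal of $(X'')^\sim$, with the canonical embedding $x'\mapsto (x''\mapsto x''(x'))$.

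To finish, I would note the two remaining requirements of a Riesz dual system. The space $X''$ is a Riesz space since it is a Banach lattice. The separation of points is immediate: any nonzero $x''\in X''$ is, by definition, a nonzero linear functional on $X'$, so there exists $x'\in X'$ with $\langle x'',x'\rangle\neq 0$. Combining these observations with the ideal statement shows that $\langle X'',X'\rangle$ fits the definition of a Riesz dual system.

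There is no serious obstacle in this argument; the only care needed is in the bookkeeping of iterated duals, so that the identification $(X')'' = (X'')' = (X'')^\sim$ is transparent and the ideal property transfers correctly from the bidual of the AL-space $X'$ to the order dual of its predual $X''$.
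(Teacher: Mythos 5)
Your argument is correct and follows the same route as the paper: Lemma \ref{X'-ALspace} and Corollary \ref{Cor3.7abramovich2002invitation} give that $X'$ is an AL-space with order continuous norm, Nakano's Theorem \ref{Th4.9AliBur1985} makes $X'$ an ideal of $X'''$, Birkhoff's Corollary \ref{Cor4.5AliBur1985} identifies $X'''$ with $(X'')^\sim$, and separation of points of $X''$ by $X'$ is immediate from the definition of $X''$ as functionals on $X'$. Your extra care with the bookkeeping $(X')''=(X'')'=(X'')^\sim$ only makes explicit what the paper leaves implicit.
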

\begin{proof}
By Lemma \ref{X'-ALspace}, $X'$ is an AL-space. By Corollary \ref{Cor3.7abramovich2002invitation}, every AL-space has an order continuous norm, so by Theorem \ref{Th4.9AliBur1985} $X'$ is an ideal of $X'''$. Since $X''$ is a Banach lattice, it follows from  Corollary \ref{Cor4.5AliBur1985} that $X'''=(X'')^\sim$. Hence $X'$ is an ideal in $(X'')^\sim$. 
Moreover, let $x \in X''$ with $x\neq 0$. Then there exists some $f\in X'$ satisfying $f(x)=x(f)\neq 0$. 

Hence $\langle X'', X'\rangle$ is a Riesz dual system.
\end{proof}

Since a pre-Riesz space with an order unit and an order unit norm need not be norm complete,  we  can not use \cite[Theorem 5.11]{AliBur1985} directly (in there it is required for the spaces to be Banach lattices). 
The next lemma is a modification of \cite[Theorem 5.11]{AliBur1985},  provided that the range space has an order unit and an order unit norm.

\begin{lemma}\label{totally-bdd-domi}
Let $X$, $Y$ be two normed Riesz spaces such that $Y$ has an order unit and equipped with an order unit norm.
Let $S, T\colon X\rightarrow Y$ be two positive operators with $0\le S\le T$. If $T[0,x]$ is $|\sigma|(Y, Y')$-totally bounded for each $x\in X^+$, then $S[0, x]$ is likewise $|\sigma|(Y, Y')$-totally bounded for each $x\in X^+$.
\end{lemma}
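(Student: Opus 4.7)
The plan is to reduce to the Banach-lattice case by embedding $Y$ into its bidual $Y''$ via the canonical evaluation $j\colon Y\to Y''$, apply the Dodds--Fremlin band theorem (Theorem \ref{Dodds-Fremlin}) on $Y''$, and transfer the conclusion back to $Y$. Since $Y$ is a normed Riesz space, $j$ is a lattice embedding with $|j(y)|=j(|y|)$, and a short computation gives $p_f(j(y))=|f|(|y|)=p_f(y)$ for every $f\in Y'$ and $y\in Y$. Hence $j$ is a homeomorphism from $(Y,|\sigma|(Y,Y'))$ onto $j(Y)$ equipped with the restriction of $|\sigma|(Y'',Y')$, so $(j\circ T)[0,x]=j(T[0,x])$ is $|\sigma|(Y'',Y')$-totally bounded for every $x\in X^+$, while $0\le j\circ S\le j\circ T$.

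To invoke Theorem \ref{Dodds-Fremlin} on $Y''$ I need $Y''$ to be Dedekind complete and $|\sigma|(Y'',Y')$ to be an order continuous locally convex solid topology. Dedekind completeness is automatic since $Y''$ is the norm dual of the Banach lattice $Y'$. By Lemma \ref{Riesz-dual-sys} the pair $\langle Y'',Y'\rangle$ is a Riesz dual system, and by Corollary \ref{consist-dual-sys} the topology $|\sigma|(Y'',Y')$ is consistent with it; Theorem \ref{Theorem3.54AliBur1985} then reduces the required order continuity of $|\sigma|(Y'',Y')$ to order continuity of $\sigma(Y'',Y')$ on $Y''$.

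The main technical step is this order continuity. Given $y_\alpha\downarrow 0$ in $Y''$, I would set $g(f):=\inf_\alpha y_\alpha(f)$ for $f\in (Y')^+$. Downward-directedness of $(y_\alpha)$ makes $g$ additive and positively homogeneous on $(Y')^+$, so it extends uniquely to a positive linear functional on $Y'$; since $0\le g\le y_{\alpha_0}$ for any fixed $\alpha_0$ and $(Y')^\sim=Y''$ by Corollary \ref{Cor4.5AliBur1985}, we have $g\in Y''$. The pointwise inequality $g\le y_\alpha$ in $Y''$ then forces $g\le\inf_\alpha y_\alpha=0$ by Dedekind completeness, and combined with $g\ge 0$ this yields $g=0$. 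Hence $y_\alpha(f)\to 0$ for every $f\in (Y')^+$, which is precisely order continuity of $\sigma(Y'',Y')$.

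With both hypotheses in place, Theorem \ref{Dodds-Fremlin} gives that for each $x\in X^+$ the set $B_x:=\{R\in L_b(X,Y''):R[0,x]\text{ is }|\sigma|(Y'',Y')\text{-totally bounded}\}$ is a band in $L_b(X,Y'')$. Since $j\circ T\in B_x$ and $0\le j\circ S\le j\circ T$, the ideal property of a band puts $j\circ S\in B_x$, so $j(S[0,x])$ is $|\sigma|(Y'',Y')$-totally bounded; pulling back along the homeomorphism $j$ then yields that $S[0,x]$ is $|\sigma|(Y,Y')$-totally bounded, as required. The main obstacle is the order continuity argument of paragraph three, where the candidate infimum functional must be built pointwise and then identified with the lattice infimum in $Y''$; the surrounding bookkeeping with the embedding and the Dodds--Fremlin band property is routine.
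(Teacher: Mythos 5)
Your proposal is correct and follows essentially the same route as the paper: view $S$ and $T$ as operators into $Y''$, use Lemma \ref{Riesz-dual-sys}, Corollary \ref{consist-dual-sys} and Theorem \ref{Theorem3.54AliBur1985} to see that $|\sigma|(Y'',Y')$ is an order continuous locally convex solid topology on the Dedekind complete lattice $Y''$, apply the Dodds--Fremlin band theorem (Theorem \ref{Dodds-Fremlin}), and restrict back to $Y$. The only local difference is that where the paper obtains order continuity of $\sigma(Y'',Y')$ by citing Theorem \ref{Theorem3.57AliBur1985} (via the observation that $Y''$ is trivially an ideal of the second dual of the system $\langle Y'',Y'\rangle$), you prove it directly with the Riesz--Kantorovich construction of the infimum functional $g(f)=\inf_\alpha y_\alpha(f)$; both arguments are valid, and your explicit check that the canonical embedding $j$ is a homeomorphism for the absolute weak topologies makes precise a step the paper leaves implicit.
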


\begin{proof}
By Lemma \ref{Riesz-dual-sys}, the pair  $\langle Y'', Y'\rangle$ is a Riesz dual system.  Since $Y''$ is an ideal of $Y''$,  it follows from Theorem \ref{Theorem3.57AliBur1985} that $\sigma(Y'', Y')$ is an order continuous topology on $Y''$.
By Theorem \ref{Theorem3.54AliBur1985} we therefore have that every consistent locally convex solid topology on $Y''$ is order continuous.
Observe that Corollary \ref{consist-dual-sys} it yields that $|\sigma|(Y'', Y')$ is consistent, and then  $|\sigma|(Y'', Y')$
 is an order continuous locally convex solid topology.

Let us view $S, T$ as two operators from $X$ to $Y''$.
As $T[0,x]$ is $|\sigma|(Y, Y')$-totally bounded for each $x\in X^+$, it has $T[0,x]$ is $|\sigma|(Y'', Y')$-totally bounded for each $x\in X^+$. 
Since $Y$ equipped with an order unit norm, we have $Y'$ is Dedekind complete. 
Hence, it follows from Theorem \ref{Dodds-Fremlin} that $S[0,x]$ is $|\sigma|(Y'', Y')$-totally bounded for each $x\in X^+$. So $S[0, x]$ is  $|\sigma|(Y, Y')$-totally bounded for each $x\in X^+$.
\end{proof}

We are now in the position to extend the result of Theorem \ref{Aliprantis-Burkinshaw} to a setting of  pre-Riesz space.
To this end, first notice that for every order unit $e$ in a pre-Riesz space $X$ with the Riesz completion $(X^\rho, i)$, the element $i(e)$ is an order unit in $X^\rho$.
\begin{theorem}\label{S^3-dom-compact}
Let $X$ be an Archimedean pre-Riesz space with an order unit $e$ and the order unit norm $\|\cdot\|_X$ such that $X$ is norm complete. Let $S,T \colon X\rightarrow X$ satisfy $0\le S\le T$, and let $T$ be compact. Assume that there exists a positive compact operator $\widehat{T}\colon X^\rho\rightarrow X^\rho$ such that $\widehat{T}\circ i=i\circ T$, where $(X^\rho, i)$ is the Riesz completion of  $X$ equipped with the order unit norm $\|\cdot\|_{X^\rho}$. Then $S^3$ is compact.
\end{theorem}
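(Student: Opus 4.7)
My plan is to reduce the claim to the classical Aliprantis--Burkinshaw theorem (Theorem~\ref{Aliprantis-Burkinshaw}) by lifting the problem to the Riesz completion $X^\rho$ and then following the familiar band--theoretic route of Dodds--Fremlin. Since the order unit norm on $X$ is regular, Theorem~\ref{norm-extension}(iii) makes $\|\cdot\|_{X^\rho}$ an isometric extension of $\|\cdot\|_X$, and $i(e)$ is an order unit for $X^\rho$ with $\|\cdot\|_{X^\rho}$ as its order unit norm. Because every norm-bounded subset of $X$ sits inside some translate of $[0,2Me]$, it is enough to show that $S^2[0,e]$ is norm totally bounded in $X$; once this is in hand, $S^2$ sends norm-bounded sets to norm totally bounded sets, and composing with the bounded operator $S$ yields the compactness of $S^3$ thanks to the norm completeness of $X$.

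The central step is to prove that $i(S[0,e])$ is $|\sigma|(X^\rho,(X^\rho)')$-totally bounded in $X^\rho$. I would argue this via the bidual $(X^\rho)''$: by Lemma~\ref{Riesz-dual-sys} the pair $\langle (X^\rho)'',(X^\rho)'\rangle$ is a Riesz dual system, so Theorems~\ref{Theorem3.57AliBur1985} and~\ref{Theorem3.54AliBur1985} combined with Corollary~\ref{consist-dual-sys} imply that $|\sigma|((X^\rho)'',(X^\rho)')$ is an order continuous locally convex solid topology on the Dedekind complete space $(X^\rho)''$. Let $j\colon X^\rho\to(X^\rho)''$ be the canonical embedding. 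Because $i(X)$ is majorizing in $X^\rho$ and $(X^\rho)''$ is Dedekind complete, the Kantorovich theorem extends the positive map $j\circ i\circ S$ defined on $i(X)$ to a positive operator $\widehat{S}\colon X^\rho\to(X^\rho)''$; choosing the minimal extension $\widehat{S}(y)=\sup\{j(i(Sx))\colon x\in X^+,\, i(x)\le y\}$ for $y\ge 0$, one checks that each term $j(i(Sx))\le j(i(Tx))=j(\widehat{T}(i(x)))\le j(\widehat{T}(y))$, so $0\le\widehat{S}\le j\circ\widehat{T}$.

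Since $\widehat{T}$ is compact, $\widehat{T}[0,i(e)]$ is norm totally bounded in $X^\rho$, hence $(j\circ\widehat{T})[0,i(e)]$ is $|\sigma|((X^\rho)'',(X^\rho)')$-totally bounded. Theorem~\ref{Dodds-Fremlin} now says that the operators in $L_b(X^\rho,(X^\rho)'')$ sending $[0,i(e)]$ to a $|\sigma|((X^\rho)'',(X^\rho)')$-totally bounded set form a band, which contains $\widehat{S}$ because $0\le\widehat{S}\le j\circ\widehat{T}$. Therefore $\widehat{S}[0,i(e)]$ is $|\sigma|((X^\rho)'',(X^\rho)')$-totally bounded, and since $j(i(S[0,e]))\subseteq\widehat{S}[0,i(e)]$ while $j$ is a topological embedding of $(X^\rho,|\sigma|(X^\rho,(X^\rho)'))$ into $((X^\rho)'',|\sigma|((X^\rho)'',(X^\rho)'))$, it follows that $i(S[0,e])$ is $|\sigma|(X^\rho,(X^\rho)')$-totally bounded.

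To finish, the hypotheses of Lemma~\ref{com-dom-continuous} are satisfied in the present setting, so $S$ is continuous from the restriction topology of $|\sigma|(X^\rho,(X^\rho)')$ on the norm-bounded subsets of $X$ into the norm topology on $X$. Since $S[0,e]$ is norm bounded and its image under $i$ is $|\sigma|$-totally bounded, Theorem~\ref{con-tot-bounded} applied to this continuous map shows that $S^2[0,e]=S(S[0,e])$ is norm totally bounded in $X$. A routine scaling and shifting with the order unit $e$ extends this to every norm-bounded subset of $X$, so $S^2$ is compact and therefore $S^3$ is compact. The main obstacle is the construction of the dominated extension $\widehat{S}\colon X^\rho\to(X^\rho)''$: because $X^\rho$ is not Dedekind complete in general, $\widehat{S}$ cannot be realized with range inside $X^\rho$, and the detour through the bidual together with the Dodds--Fremlin band result is what circumvents this obstruction.
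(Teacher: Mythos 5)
Your overall architecture matches the paper's for the last two links of the chain: the Dodds--Fremlin band theorem applied in the bidual to get $|\sigma|$-total boundedness of the image of an order interval, followed by Lemma \ref{com-dom-continuous} and Theorem \ref{con-tot-bounded} to upgrade this to norm total boundedness. You diverge in two places. First, you drop the paper's first link entirely (Lemma \ref{lemma 5.1}, which the paper uses to place $S(U^+)$ inside an order interval $[0,v+\epsilon e]$) by observing that for an order unit norm the unit ball already sits in a translate of an order interval; this is legitimate and would in fact yield the stronger conclusion that $S^2$ is compact. Second --- and this is where the gap lies --- you replace the paper's Lemma \ref{totally-bdd-domi} by a dominated positive linear extension $\widehat{S}\colon X^\rho\to (X^\rho)''$ of $j\circ i\circ S$ satisfying $0\le \widehat{S}\le j\circ\widehat{T}$.

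The existence of that $\widehat{S}$ is not justified. Kantorovich's theorem does give \emph{some} positive extension of $j\circ i\circ S$ from the majorizing subspace $i(X)$, but it gives no control from above by $j\circ\widehat{T}$: the canonical extension is only bounded by $p(y)=\inf\{j(i(Sx))\colon i(x)\ge y\}$, and for $y\ge 0$ this infimum dominates $j(\widehat{T}(y))$ rather than being dominated by it. Your proposed remedy, the ``minimal extension'' $\widehat{S}(y)=\sup\{j(i(Sx))\colon x\in X^+,\, i(x)\le y\}$, is superadditive but not additive in general: for $0\le i(x)\le y_1+y_2$ the Riesz decomposition of $i(x)$ inside $X^\rho$ produces components $z_1\le y_1$ and $z_2\le y_2$ that need not lie in $i(X)$, and since $X$ is not assumed pervasive nor to have the Riesz decomposition property there is no way to split $x$ itself compatibly. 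Without linearity you cannot place $\widehat{S}$ in $L_b(X^\rho,(X^\rho)'')$ at all, and without the domination $\widehat{S}\le j\circ\widehat{T}$ the band argument of Theorem \ref{Dodds-Fremlin} does not start. (For comparison, the paper keeps $S$ and $T$ as operators out of $X$ and invokes Lemma \ref{totally-bdd-domi} directly, which avoids extending $S$ at the cost of applying a lemma stated for Riesz-space domains to the pre-Riesz space $X$.) To repair your route you would need either an additional hypothesis guaranteeing the dominated extension (for instance pervasiveness together with RDP) or a different mechanism for transferring $|\sigma|$-total boundedness from $\widehat{T}[0,i(e)]$ to $i(S[0,e])$.
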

\begin{proof}

Let $U=\{x\in X\colon \, \|x\|_X\le 1\}$ be the closed unit ball in $X$. Since the norm on $X$ is an order unit norm,  we have 
\[\|x\|_X=\inf \{\lambda\in \R^+\colon\, -\lambda e\le x\le \lambda e\}, \, x\in X.\]
Let $x\in U$ be such that $\|x\|_X\le 1$. 
Hence there exists $\lambda\le 1$ with $-y\le x\le y$ and $y=\lambda e$. Due to $x=\frac{1}{2}(y+x)-\frac{1}{2}(y-x)$  it follows from $0\le y+x\le 2y$ and $0\le y-x\le 2y$ 
that $\|y+x\|_X\le 2\|y\|_X\le 2$ and $\|y-x\|_X\le 2\|y\|_X\le 2$, respectively.  Thus $\frac{1}{2}(y+x), \frac{1}{2}(y-x)\in U\cap X^+$, and then $U\subseteq U^+-U^+$ holds. Therefore, it is enough to show that $S^3(U^+)$ is a norm totally bounded set.

It is clear that   $i(e)$ is an order unit in $X^\rho$, 
and the extension norm $\|\cdot\|_{X^\rho}$ is an order unit norm with respect to $i(e)$. By Lemma \ref{lemma 5.1} there exists some $u\in (X^\rho)^+$ such that $\|(i(Sx)-u)^+\|_{X^\rho}\le \epsilon$ for all $x\in U^+$. This implies that $0\le (i(Sx)-u)^+\le \epsilon i(e) $. Thus we have the following estimate,
\begin{align*}
i(Sx-\epsilon e)&=i(Sx)\wedge u + (i(Sx)-u)^+-\epsilon i(e)\\
& \le i(Sx)\wedge u\\
& \le u.
\end{align*} 
So $i(Sx)\in [0, u+\epsilon i(e)]$ for every $x\in U^+$. Take $v\in X$ with $i(v)\ge u$. Then for every $x\in U^+$ we have $i(Sx)\in [0, i(v+\epsilon e)]$. Hence 
\[S(U^+)\subseteq [0, v+\epsilon e]. \] 
Therefore 
\[
S^2(U^+)\subseteq S[0, v+\epsilon e], \quad \mbox{ and }  \quad  S^3(U^+)\subseteq S^2[0, v+\epsilon e]. 
\tag{$\ast$}
\] 
By Lemma \ref{com-dom-continuous} the operator $S$ is continuous on norm bounded subsets of $X$ with respect to the $r$-topology of the $|\sigma|(X^\rho, (X^\rho)')$ to the norm topology. 
By Theorem \ref{con-tot-bounded} the operator $S$ maps totally bounded sets with respect to the $r$-topology of $|\sigma|(X^\rho, (X^\rho)')$ to norm totally bounded sets. Since $T[0, v+\epsilon e]$ is norm totally bounded, $T[0, v+\epsilon e]$ is totally bounded with respect to the $|\sigma|(X^\rho, (X^\rho)')$ topology. Hence,  $T[0, v+\epsilon e]$ is totally bounded with respect to the $r$-topology of $|\sigma|(X^\rho, (X^\rho)')$. Then by Lemma \ref{totally-bdd-domi} the set $S[0,v+\epsilon e]$ is likewise totally bounded with respect to  the $r$-topology of $|\sigma|(X^\rho, (X^\rho)')$. Clearly, $S[0,v+\epsilon e]\subseteq [0, S(v+\epsilon e)]$.
Therefore,  $S^2[0,v+\epsilon e]=S(S[0,v+\epsilon e])$ is a norm totally bounded set. By the second inclusion of $(\ast)$ 
we have $S^3(U^+)$ is a norm totally bounded set, as desired.
\end{proof}

\begin{remark}
In the view of Theorem \ref{S^3-dom-compact}, it is of interest to know under which condition such a positive compact operator $\widehat{T}$ exists. By Theorem \ref{comp-domination1} and Theorem \ref{pos-dom-propertyonpre-Riesz}, we know that this is true for $X$ with order continuous norm.
However, if $X$ is a Banach lattice with an order unit $e$, then it can be renormed by 
\[\|x\|_\infty=\inf\{\lambda>0\colon \, |x|\le \lambda e\},\]
and $X$ becomes an AM-space. By Kakutani-Bohnenblust and M. Krein-S. Krein representation theorem \cite[Theorem 4.29]{AliBur1985},
we have $X$ is lattice isometric to some $C(\Omega)$ for a (unique up to homeomorphism) Hausdorff compact topological space $\Omega$, and the norm $\|\cdot\|_{\infty}$ on  $C(\Omega)$ is not order continuous.
So it is still an open question which choice of a norm on $X$ leads to a similar result as Theorem \ref{S^3-dom-compact}.
\end{remark}

\hide{

For the pre-Riesz space $Y$, there exists a bipositive linear map 
$j\colon  Y\rightarrow Y^\rho$ such that $j(Y)$ is an order dense subspace
 of the vector lattice $Y^\rho$. Let $\widehat{S}=j\circ S$. As $Y$ is a subspace of $Y^\rho$, so take values of $\widehat{S}$ and $T$ in $Y$, we have 
  $0\le \widehat{S}\le T$. 
Let $U$ and $V$ are closed unit balls of $X$ and $Y^\rho$ respectively.
It follows from  Lemma \ref{lemma 5.1} that there exists some $u\in (Y^\rho)^+$ such that $\|(\widehat{S}x-u)^+\|_{Y^\rho}\le  \epsilon$ for all $x\in U^+$.

 As $\widehat{S}x\in Y^\rho$, we have 
 $\widehat{S}x=\widehat{S}x\wedge u+(\widehat{S}x-u)^+$, it follows that
 \[\widehat{S}(U^+)\subseteq u\wedge\widehat{S}(U^+)+\epsilon V.\]

 As $T\colon X\rightarrow Y$ is compact, so $T$ norm bounded, 
by Theorem \ref{adj-compact}, we have $T'\colon Y'\rightarrow X'$ is likewise compact and $0\le \widehat{S}'\le T'$, where $\widehat{S}'\colon (Y^\rho)'\rightarrow X'$.\footnote{In fact, during the proof of Theorem \ref{adj-compact}, it does not need the norm completeness of $X$ and $Y$, it only need the totally boundedness of corresponding topology. Actually, the proof of Theorem \ref{adj-compact} uses the result of Theorem \ref{Grothendieck}.}
 For $X'$, we have a natural embedding $k\colon X'\rightarrow (X')^\delta$, where 
 $(X')^\delta$ is the Dedekind completion of $X'$. 
Then $k\circ \widehat{S}'\colon  (Y^\rho)'\rightarrow (X')^\delta$.
\[
 \xymatrix{
  X'\ar@{->}[d]^{k} & & Y'\ar@{->}[ll]_{S'} \\
  (X')^\delta  &  & (Y^\rho)' \ar@{->}[ll]_{k\circ (\widehat{S})'}\ar@{->}[u]_{j'}\ar@{->}[ull]_{(\widehat{S})'}
} 
\]
Because of $T\colon X\rightarrow Y\subseteq Y^\rho$, 
so $\widehat{T}'\colon (Y^\rho)'\rightarrow X'\subseteq (X')^\delta$, so we can view $\widehat{T}'$ as an operator from $(Y^\rho)'$ into $(X')^\delta$. Let $j'\colon (Y^\rho)'\rightarrow Y'$, then $\widehat{T}'=T' \circ j'$. Because of $j'$ is continuous, $T'$ is compact, 
hence $\widehat{T}'$ is compact. As the same reason, $k\circ \widehat{T}'$ is compact as well.

For using Theorem \ref{Dodds-Fremlin}, we note that 
\begin{itemize}
\item[(N1)] For $k\circ \widehat{T}'\in L_b((Y^\rho)', (X')^\delta)$ and $\varphi\in (Y^\rho)'^+$, we have $k\circ \widehat{T}'([0,\varphi])$ is norm totally bounded.
\end{itemize}

As $X'$ has RDP, so does $X'=(X')^\rho$, hence $X'$ is pervasive Archimedean with RDP. Because of $X'$
 has order continuous norm,  by Corollary \ref{ore-con-nor}, $(X')^\delta$ has order continuous norm as well.
Notice that note (N1), 
it follows from $0\le k\circ \widehat{S}'\le k\circ \widehat{T}'$ and Theorem \ref{Dodds-Fremlin} that $k\circ \widehat{S}'$ maps order intervals of $(Y^\rho)'$ onto norm totally bounded subsets of $(X')^\delta$. We claim that 

\begin{itemize}
\item[(C1)] $\widehat{S}'$ maps order intervals of $(Y^\rho)'$ onto norm totally bounded subsets of $X'$.
\end{itemize}

For using Theorem \ref{Grothendieck}, we note that 
\begin{itemize}
\item[(N2)]  $\mathcal{G}=\{rU:r>0\}$ is a full collection of $\sigma(X, X')$-bounded subsets of $X$.
\item[(N3)] $\mathcal{I}=\{[-\varphi, \varphi]: 0\le \varphi\in (Y^\rho)'\}$ is a full collection of $\sigma((Y^\rho)', Y^\rho)$-bounded subsets of $(Y^\rho)'$.
\end{itemize}

Therefore, applying Theorem \ref{Grothendieck}, we have $\widehat{S}(U^+)$
is a $|\sigma|(Y^\rho, (Y^\rho)')$-totally bounded set, and hence $u\wedge\widehat{S}(U^+)$
is likewise a $|\sigma|(Y^\rho, (Y^\rho)')$-totally bounded set. Since $Y$ has order continuous norm, by Theorem \ref{order-cont-norm}, then $Y^\rho$ has order continuous norm as well. Observe that $u\wedge\widehat{S}(U^+)$ is order bounded, take Theorem \ref{normtopo-asw} into 
consideration, we have $u\wedge\widehat{S}(U^+)$ is norm  totally bounded.
So $\widehat{S}(U^+)$ is a norm totally bounded set. 

Because of $\widehat{S}=j\circ S$ and, the norm of $Y$ is regular, so $j\colon  Y\rightarrow Y^\rho$ is 
an isometry. So for $z\in Y$, $\|z\|_Y=\|i(z)\|_{Y^\rho}$. So $j^{-1}\colon j(Y)\rightarrow Y$ is also an isometry. So $S(U^+)$ is a norm totally bounded set as well.

\begin{proof}[Proof of (N1)]
For $\varphi\in (Y^\rho)'^+$, because of the norm on $(Y^\rho)'$ is a Riesz norm, hence monotone. As $k\circ \widehat{T}'$ is compact, so $k\circ \widehat{T}'([0,\varphi])$ is relatively compact set in $(X')^\delta$. As every relatively compact set in a metric space is totally bounded, so $k\circ \widehat{T}'([0,\varphi])$ is totally bounded.
\end{proof}

\begin{proof}[Proof of (C1)]
For arbitrary $\varphi\in (Y^\rho)'^+$. Notice that $k\circ \widehat{S}'[-\varphi, \varphi]$ is norm totally bounded in $(X')^\delta$. Because of $k\colon X'\rightarrow (X')^\delta$ is isometry, so two norm topologies on $X'$ and $(X')^\delta$ are equal. So $\widehat{S}'[-\varphi, \varphi]$ is norm totally bounded in $X'$.
\end{proof}

\begin{proof}[Proof of (N2) and (N3)]
To show $rU$ is a weakly bounded subset of $X$, it is enough to show $U$ is weakly bounded. Let $N$ be a weak neighborhood of 0, then there exists $\varphi_1,\cdots,\varphi_n\in X'$ and $\alpha_1,\cdots,\alpha_n>0$ such that 
\[M:=\bigcap_{i=1}^{n}\{x\colon |\varphi_i(x)|<\alpha_i\}\subseteq N.\]
As for every $x\in U$, it has $|\varphi_i(x)|\le \|\varphi_i\|$, take $\lambda:= \max \{\frac{\|\varphi_i\|}{\alpha_i}\}$,  $i=1,\cdots, n$.
So we have
\[|\varphi_i(\frac{1}{\lambda}x)|=\frac{1}{\lambda}|\varphi_1(x)|\le \frac{1}{\lambda}\|\varphi_i\|\le \frac{\alpha_i}{\|\varphi_i\|}\|\varphi_i\|=\alpha_i.\]
So  $\frac{1}{\lambda}x\in M\subseteq N$, and then $U\subseteq \lambda N$. Hence $U$ is weakly bounded.

Because of every unit ball of $(Y^\rho)'$ is weak* compact, hence weak* bounded. So $[-\varphi, \varphi]$ for $0\le \varphi\in (Y^\rho)'$ is weak* bounded.

The definition of "full" refer to \cite{AliBur1985}. It is clear that both $\mathcal{G}$ and $\mathcal{I}$ are full with respect to its own topology.
\end{proof}

Claim:
In our proof above, we quit rely on the technique of proof of \cite[Theorem 16.20]{AliBur1985} which was established by P. G. Dodds and D. H. Fremlin in \cite{dodds1979compact}. They asked for both domain and codomain spaces of positive operator be Banach lattices. But in our proof, we only have regularization of norm on pre-Riesz space, so $(X^\rho, \|\cdot\|_{X^\rho})$ is not complete usually. In
fact, however, it is not necessary of norm completeness for P. G. Dodds and D. H. Fremlin's proof, order continuous of norms
on domain and codomain spaces is enough to make the results hold. So our result holds as well because of $(X^\rho, \|\cdot\|_{X^\rho})$ has order continuous norm.
}

\hide{

\newpage

 The \textbf{center}\index{center} of $X$ is the closed subspace of $L(X)$ consisting of all operators $T$ on $X$ such that $|Tx|\le C|x|$ for all $x\in X$ and some constant $C\ge 0$. We use $Z(X)$ denote the center of $X$.

\begin{remark}
If the Riesz completion $X^\rho$ is norm complete, by above theorem,  we have the Lamperti extension on the Banach lattice.
In  Banach lattice $X^\rho$, if $T\in Z(X^\rho)$, then $T$ is a Lamperti operator and $|T|\in Z(X^\rho)$, cf.  \cite{Are1983}.
  As a similar result, if a positive operator $T$ in the center of complete pre-Riesz space $X$, then the extension $\widehat{T}\in L(X^\rho)$ in  the center of $X^\rho$ as well. In fact, define $\widehat{T}$ on $X^\rho$ same as \eqref{ope-extension}, one has that for $0<x\in X^\rho$,
\begin{eqnarray}
|\widehat{T}x|&&=\sup\{|Ty|: y\in X, 0< i(y)\le x\} \nonumber\\
&&\le \sup\{c|y|: y\in X, 0< i(y)\le x\}\nonumber\\
&&=c|x|.\nonumber
\end{eqnarray}
So $\widehat{T}\in Z(X^\rho)$. Thus for the positive linear extension with $S\le \widehat{T}$ on $X^\rho$, it has $S\in Z(X^\rho)$, so $S$ is a Lamperti operator on $X^\rho$.
\end{remark}

We consider a classical example of pre-Riesz space $X=(\mathbb{R}^n, K)$, which is given in \cite{KalGaa2006, KalGaa2008a}. It has appeared before in Example \ref{Riesz-com-eg} (3) and Example  \ref{disj-ele-POVS}. So we keep using the former notations.

\begin{example}\label{3dim-preRiesz}
Let $X=(\mathbb{R}^3, K)$, $K$ be the positive cone generated by 4 rays $x_i, i=1,2,3,4$, and  
the representation of $K$ can be written as 
\[K:=\{x\in \mathbb{R}^3\colon f_i(x)\ge 0, i=1,2,3,4\},\]
where $x_i$ and $f_i(x)$ have same expression with Example \ref{Riesz-com-eg} (3).
Then $X$ is a pre-Riesz space.
Define the embedding map $i\colon\mathbb{R}^3\rightarrow \mathbb{R}^4$ with 
\[i\colon(x_1,x_2,x_3)\mapsto x_1d_1+x_2d_2+x_3d_3,\]
where 
\begin{eqnarray}
d_1=\begin{pmatrix}
-1 \\ 1 \\1 \\-1
\end{pmatrix},
d_2=\begin{pmatrix}
-1 \\ -1 \\1 \\1
\end{pmatrix},
d_3=\begin{pmatrix}
1 \\ 1 \\1 \\1
\end{pmatrix}
.\nonumber
\end{eqnarray}
Then $X^\rho=(\mathbb{R}^4, i)$ is the Riesz completion of $X$.
The operator $T$ defined on $X$ be a  square matrix of order three. 
Because of the matrix operator on finite dimensional space $X$ is bounded then it will be compact. 

Claim:
If $\widehat{T}\in Z(X^\rho)$, 
then 
$T\in Z(X)$.
\end{example}
\begin{proof}
Let
$T=(a_{ij})_{3\times3}$.
As  $\widehat{T}\in Z(X^\rho)$, so we have $|i(Tx)|\le C|i(x)|$,  for all $ x\in X$  and some constant $C$. Let $x=(1,1,0)^T$, then
\begin{eqnarray}\label{Tx}
i(Tx)=i\begin{pmatrix}
a_{11}+a_{12}\\a_{21}+a_{22}\\a_{31}+a_{32}
\end{pmatrix}
=(a_{11}+a_{12})d_1+(a_{21}+a_{22})d_2+(a_{31}+a_{32})d_3,
\end{eqnarray}
and
\begin{eqnarray}
i(x)=x_1d_1+x_2d_2+x_3d_3=d_1+d_2=\begin{pmatrix}
-2\\0\\2\\0
\end{pmatrix}.\nonumber
\end{eqnarray}
So both the second and fourth entry of $i(Tx)$ in \eqref{Tx} will be 0, which can be combined to obtain
 \[a_{31}+a_{32}=0 \mbox{ and } a_{11}+a_{12}=a_{21}+a_{22}.\]
Use the same way, take
\[
x=\begin{pmatrix}
1\\-1\\0
\end{pmatrix},\begin{pmatrix}
-1\\0\\1
\end{pmatrix},\begin{pmatrix}
0\\1\\-1
\end{pmatrix},\begin{pmatrix}
1\\0\\1
\end{pmatrix},\begin{pmatrix}
0\\1\\1
\end{pmatrix},\nonumber
\]
in \eqref{Tx} respectively, 
and do the similar analysis. It is not hard to give the expression of $T$, which is 
\begin{eqnarray}
T=(a_{ij})_{3\times 3}=
\begin{cases}
0, & \text{if }i\neq j, \\
C, & \text{if }i= j.
\end{cases}\nonumber
\end{eqnarray}
Hence $T\in Z(X)$.
\end{proof}






\section{Lamperti operators on pre-Riesz spaces}\label{Lam-ope-preR}


Recall that a linear mapping $T$ between two Banach lattices is called \emph{Lamperti operator} if $T$ is order bounded and disjointness preserving.
Let $L(X)$ be all norm bounded operators on Banach lattice $X$. 




Instead of Banach lattice, we consider  $X$ to be a partially ordered vector space (pre-Riesz space, in particular) with a seminorm $p$. 




\begin{definition}\cite{KalGaa2008b}
A pre-Riesz space $X$ is call \emph{pervasive}, if for each element $y\in X^\rho$, $y\ge 0$, $y\neq 0$, there is $x\in X$, $x\neq 0$, such that $0< i(x)\le y$.
\end{definition}

\begin{proposition}\cite[Theorem 4.15, Corollary 4.16]{van2013tensor}\label{J. Waaij}
For an Archimedean pre-Riesz space $X$, the following are equivalent. 
\begin{itemize}
\item[(i)] X is pervasive.
\item[(ii)] For all $0< y\in X^\rho$, it holds
$y= \sup \{x\in X\colon 0< i(x)\le y \}$.
\end{itemize}
\end{proposition}

We say that the real vector space $X$ has the \emph{Riesz decomposition property} (RDP) if for every $x_1, x_2, z \in K$
with $z \le x_1 + x_2$, there exist $z_1, z_2 \in K$ such that $z = z_1 + z_2$ with $z_1 \le x_1$ and
$z_2 \le x_2$. Equivalently, $X$ has the RDP if and only if for every $x_1, x_2, x_3, x_4 \in X$
with $x_1, x_2 \le x_3, x_4$, there exists $z \in X$ such that $x_1, x_2 \le z \le x_3, x_4$. 
This property is studied  in \cite{Malinowskithesis}.



Let $X$ be a pre-Riesz space with Riesz completion $X^\rho$, $Y$ a Dedekind complete vector lattice. As $X$ order dense in $X^\rho$, hence it is a majorizing vector subspace of $X^\rho$. 
Take a positive operator $T\colon X\rightarrow Y$, think about Hahn-Banach theorem, we have $T$ always has a positive extension. The following theorem is a similar result with Kantorovich's extension theorem in \cite{AliBur1985}, but, furthermore, we study the order bounded and disjointness preserving properties of the extension.

\begin{theorem}\label{lampt-ext}
Let $X$ be a pervasive Archimedean pre-Riesz space with RDP, $Y$ a Dedekind complete vector lattice. If $T\colon X\rightarrow Y$ is order bounded and disjointness preserving, then there exists a extension to all of $X^\rho$ which is order bounded and disjointness preserving.
\end{theorem}
\begin{proof}
Let $i\colon X\rightarrow X^\rho$ be the bipositive linear map as in Theorem \ref{embd-preR}. 
For a fixed $0<y\in X^\rho$, since $X$ is pervasive there exist some $x\in X$ with $0<i(x)\le y$. As $X$ is Archimedean, it follows from Proposition \ref{J. Waaij} that $y=\sup\{x\in X\colon 0<i(x)\le y\}$.  Because of $X$ is majorizing in $X^\rho$, so there exist $z\in X$ such that $y\le i(z)$. So $i(x)\le y\le i(z)$ and $x\le z$. The order boundedness of $T$ implies $Tx\le Tz$. Thus $\sup\{Tx\colon x\in X, 0<i(x)\le y\}$ exists in $Y$ for each $x\in X$ with $0<i(x)\le y$. So one can define a mapping $\widehat{T}\colon X^\rho\rightarrow Y$ via the formula
\begin{eqnarray}\label{ope-extension}
\widehat{T}y=\sup\{Tx\colon x\in X, 0<i(x)\le y\},\ y\in X^\rho.
\end{eqnarray}

Obviously, this $\widehat{T}$ is positive, hence order bounded.
Because of $X$ has RDP, so for $0<i(x)\le y_1+y_2$, there exist $x_1, x_2\in X$ with $x_1+x_2=x$, $0<i(x_1)\le y_1$ and  $0<i(x_2)\le y_2$. Thus $\widehat{T}(y_1+y_2)=\sup\{Tx\colon x\in X, 0<i(x)\le y_1+y_2\} 
\le\sup\{T(x_1+x_2)\colon x_1, x_2\in X, 0<i(x_1)\le y_1, 0<i(x_2)\le y_2\} 
\le \widehat{T}y_1+\widehat{T}y_2$. So $\widehat{T}$ is sublinear, and it is clear that $\widehat{T}(x)=T(x)$ holds for all $x\in X$. 
By Hahn-Banach extension theorem, the operator $T$ has a linear extension $S$ to  $X^\rho$ satisfying $S(u)\le \widehat{T}(u)$ for all $u\in X^\rho$.

An easy argument shows that $S$ is also positive hence order bounded. We only need to proof $S$ is disjointness preserving. For $0\le y_1, y_2\in X^\rho$ with $y_1\perp y_2$. By above discussion, there exist $0\le x_j
\in X$ such that $0\le i(x_j)\le y_j$, $j=1,2$. Hence, $i(x_1)\perp i(x_2)$ and $x_1\perp x_2$, the disjointness preserving of $T$ implies that $Tx_1\perp Tx_2$. Thus $\widehat{T}y_1\perp \widehat{T}y_2$ and notice that $S(y_j)\le \widehat{T}(y_j)$, we have $S(y_1)\perp S(y_2)$. 
\end{proof}

 The \emph{center} of $X$ is the closed subspace of $L(X)$ consisting of all operators $T$ on $X$ such that $|Tx|\le c|x|$ for all $x\in X$ and some $c\ge 0$. We use $Z(X)$ denote the center of $X$.

\begin{remark}
If the Riesz completion $X^\rho$ is norm complete, by above theorem,  we have the Lamperti extension on the Banach lattice.
In  Banach lattice $X^\rho$, if $T\in Z(X^\rho)$, then $T$ is a Lamperti operator and $|T|\in Z(X^\rho)$, cf.\cite{Are1983}.
  As a similar result, if a positive operator $T$ in the center of complete pre-Riesz space $X$, then the extension $\widehat{T}\in L(X^\rho)$ in  the center of $X^\rho$ as well. In fact, define $\widehat{T}$ on $X^\rho$ same as \eqref{ope-extension}, one has that for $0<x\in X^\rho$,
\begin{eqnarray}
|\widehat{T}x|&&=\sup\{|Ty|: y\in X, 0< i(y)\le x\} \nonumber\\
&&\le \sup\{c|y|: y\in X, 0< i(y)\le x\}\nonumber\\
&&=c|x|.\nonumber
\end{eqnarray}
So $\widehat{T}\in Z(X^\rho)$. Thus for the positive linear extension with $S\le \widehat{T}$ on $X^\rho$, it has $S\in Z(X^\rho)$, so $S$ is a Lamperti operator on $X^\rho$.
\end{remark}

We consider a classical example of pre-Riesz space $X=(\mathbb{R}^n, K)$, which is given in \cite{KalGaa2006, KalGaa2008a}. 

\begin{example}\label{3dim-preRiesz}
Let $X=(\mathbb{R}^3, K)$, $K$ be the positive cone generated by 4 rays,
\begin{eqnarray}
K=\mbox{pos}\left\{\begin{pmatrix}
1 \\ 0 \\1
\end{pmatrix},
\begin{pmatrix}
0 \\ 1 \\1
\end{pmatrix},
\begin{pmatrix}
-1 \\ 0 \\1
\end{pmatrix},
\begin{pmatrix}
0 \\ -1 \\1
\end{pmatrix}
\right\}.\nonumber
\end{eqnarray}
The representation of $K$ can be written as 
\[K:=\{x\in \mathbb{R}^3\colon f_i(x)\ge 0, i=1,2,3,4\},\]
where $f_i(x)$ with following expression
\begin{eqnarray}
f_1=\begin{pmatrix}
-1 \\ -1 \\1
\end{pmatrix},
f_2=\begin{pmatrix}
1 \\ -1 \\1
\end{pmatrix},
f_3=\begin{pmatrix}
1 \\ 1 \\1
\end{pmatrix},
f_4=\begin{pmatrix}
-1 \\ 1 \\1
\end{pmatrix}
.\nonumber
\end{eqnarray}
Then $X$ is a pre-Riesz space.
Define the embedding map $i\colon\mathbb{R}^3\rightarrow \mathbb{R}^4$ with 
\[i\colon(x_1,x_2,x_3)\mapsto x_1d_1+x_2d_2+x_3d_3,\]
where 
\begin{eqnarray}
d_1=\begin{pmatrix}
-1 \\ 1 \\1 \\-1
\end{pmatrix},
d_2=\begin{pmatrix}
-1 \\ -1 \\1 \\1
\end{pmatrix},
d_3=\begin{pmatrix}
1 \\ 1 \\1 \\1
\end{pmatrix}
.\nonumber
\end{eqnarray}
Then $X^\rho=(\mathbb{R}^4, i)$ is the Riesz completion of $X$.
The operator $T$ defined on $X$ be a  square matrix of order three. 
Because of the matrix operator on finite dimensional space $X$ is bounded then it will be compact. 

Claim:
If $\widehat{T}\in Z(X^\rho)$, 
then 
$T\in Z(X)$.
\end{example}
\begin{proof}
Let
$T=(a_{ij})_{3\times3}$.
As  $\widehat{T}\in Z(X^\rho)$, so we have $|i(Tx)|\le C|i(x)|$,  for all $ x\in X$  and some constant $C$. Let $x=(1,1,0)^T$, then
\begin{eqnarray}\label{Tx}
i(Tx)=i\begin{pmatrix}
a_{11}+a_{12}\\a_{21}+a_{22}\\a_{31}+a_{32}
\end{pmatrix}
=(a_{11}+a_{12})d_1+(a_{21}+a_{22})d_2+(a_{31}+a_{32})d_3,
\end{eqnarray}
and
\begin{eqnarray}
i(x)=x_1d_1+x_2d_2+x_3d_3=d_1+d_2=\begin{pmatrix}
-2\\0\\2\\0
\end{pmatrix}.\nonumber
\end{eqnarray}
So both the second and fourth entry of $i(Tx)$ in \eqref{Tx} will be 0, which can be combined to obtain
 \[a_{31}+a_{32}=0 \mbox{ and } a_{11}+a_{12}=a_{21}+a_{22}.\]
Use the same way, take
\[
x=\begin{pmatrix}
1\\-1\\0
\end{pmatrix},\begin{pmatrix}
-1\\0\\1
\end{pmatrix},\begin{pmatrix}
0\\1\\-1
\end{pmatrix},\begin{pmatrix}
1\\0\\1
\end{pmatrix},\begin{pmatrix}
0\\1\\1
\end{pmatrix},\nonumber
\]
in \eqref{Tx} respectively, 
and do the similar analysis. It is not hard to give the expression of $T$, which is 
\begin{eqnarray}
T=(a_{ij})_{3\times 3}=
\begin{cases}
0, & \text{if }i\neq j, \\
C, & \text{if }i= j.
\end{cases}\nonumber
\end{eqnarray}
Hence $T\in Z(X)$.
\end{proof}

\section{Extension of compact operators on pre-Riesz spaces}\label{Ext-ope-preR}

Recall that dominated theorem of compact operator, which is proved by Dodds-Fremlin in \cite{dodds1979compact}, however, we cite it from \cite{AliBur1985} because of more directly use. The details can be seen by following theorem.

\begin{theorem}\cite[Theorem 5.20]{AliBur1985} \label{dom-com-Rie}
Let $X$ and $Y$ be two Banach lattices with $X'$ and $Y$ having order continuous norms. If a positive operator $S: X\rightarrow Y$ is dominated by a compact operator, then $S$ is necessarily a compact operator.
\end{theorem}
It is an interesting question that can we generalize this theorem to our pre-Riesz space. To achieve this goal, we firstly need to extend the order continuous norms.

\subsection{Extension of order continuous norms}

We will use $X^+$ to denote the positive part of $X$.
As a  known result, $X$ is order dense in $X^\rho$. But for a  positive element in $X^\rho$, it is hard to find an upward directed net such that this net is order convergent to it, even though $X$ is pervasive. Hence, Riesz decomposition property appears to be useful at here. Let us recall the Malinowski's result first.

\begin{theorem}\cite{Malinowskithesis}\label{preR-netconv}
If $X$ is a pervasive Archimedean pre-Riesz space with RDP, let $X^\rho$ be the Riesz completion of $X$. 
Then there exists a net in $X$ which is order convergent  to an element of $(X^\rho)^+$.
\end{theorem}
\begin{proof}
Let $i\colon X\rightarrow X^\rho$ be the embedding map defined in Theorem \ref{embd-preR}. 
Let $0< y \in X^\rho$.

Because of $X$ is an order dense subspace of $X^\rho$ and generates $X^\rho$ as a vector lattice.
So there exist  $a_j$ and $b_k$ in $X^+$, $j=1,\cdots, n$ and $k=1,\cdots, m$ such that $y=\bigwedge_{j=1}^{n} i(a_j)-\bigwedge_{k=1}^{m} i(b_k)$. Let $y_1=\bigwedge_{j=1}^{n} i(a_j)$, $y_2=\bigwedge_{k=1}^{m} i(b_k)$, then $y_1, y_2\in (X^\rho)^+$.
As $X$ is pervasive, so there exists an element $0< u\in X$  such that $0<i(u) \leq y_1$.
Let 
\begin{eqnarray}
D=\{u\in X^+\colon 0< i(u)\leq y_1 \}.\nonumber
\end{eqnarray}

For $u_1, u_2\in X$ we have $u_1,u_2\leq a_1,\cdots, a_n$ in $X$.
By RDP of $X$, there exists $u_3\in X$ such that 
\[u_1,u_2\leq u_3\leq a_1,\cdots, a_n.\]
So $u_3\leq\bigwedge_{\gamma=1}^{n} i(a_\gamma)=y_1$. Hence $u_3\in D$, by induction $D$ is upward directed. Define $\{z_{\alpha}\}_{\alpha\in D}$ by $z_{\alpha}:= \alpha$, $\alpha\in D$, clearly $\{z_{\alpha}\}_{\alpha}$ is an upward directed net in $X$.

As $X$ is Archimedean, during the Lemma 88  of \cite{Malinowskithesis}, the author has already proved $y_1=\sup D$, (this is also Proposition \ref{J. Waaij}). Hence there exists $\{z_\alpha\}_\alpha$ with $z_\alpha\xrightarrow{o} y_1$. 
By the same reason, there exists $\{w_\beta\}_\beta$ with $w_\beta\xrightarrow{o} y_2$.

Define $s_{(\alpha, \beta)}=z_\alpha-w_\beta$, we claim that $s_{(\alpha, \beta)}\xrightarrow{o} y_1-y_2$. In fact, there exist $\{u_\alpha\}$, $\{v_\beta\}$ such that $-u_\alpha\le z_\alpha-y_1\le u_\alpha$, $-v_\beta\le w_\beta-y_2\le v_\beta$ and $u_\alpha\downarrow 0$, $v_\beta\downarrow0$.
So $-(u_\alpha+v_\beta)\le (z_\alpha-w_\beta)-(y_1-y_2)\le u_\alpha+v_\beta$ and $(u_\alpha+v_\beta)\downarrow 0$. Hence
$s_{(\alpha, \beta)}\xrightarrow{o} y_1-y_2$. Thus we complete the proof.
\end{proof}

\begin{remark}\label{dow-dir-con}
Obviously,  
for $x=\bigwedge_{\gamma=1}^{n}i(a_{\gamma})$ in $(X^\rho)^+$,  and $z\in X$ with $z-x\in (X^\rho)^+$, by the proof of Theorem \ref{preR-netconv},
 one has a net $\{u_\alpha\}_{\alpha\in D}$ which is upward directed convergent to $z-x$. Define $v_\alpha=z-u_\alpha$, then  the net $\{v_\alpha\}_{\alpha\in I}\subseteq X$ is downward directed convergent to $x$.

\end{remark}

\begin{lemma}\label{inf=inf}
Let $X$ be an Archimedean pervasive pre-Riesz space with RDP,  $X^\rho$ its Riesz completion. If $\{y_\alpha\}_{\alpha\in I}$ in $(X^\rho)^+$ with $\{y_\alpha\}$ upward directed, then there exists a net $\{z_{(\mu,\nu)}\}$  in $X^+$ which is upward directed and $\sup\{z_{(\mu,\nu)}\colon \nu\in D(\mu)\}=y_\mu$ for all $\mu\in I$. In particular, 
 $\sup \{z_{(\mu,\nu)}\colon \mu\in I, \nu\in D(\mu)\}=\sup \{y_\mu\}$.
\end{lemma}

\begin{proof}
The proof goes by two steps.

STEP 1. As $\{y_\alpha\}_{\alpha\in I}$ is upward directed, by assumption for $\alpha_0\in I$, $y_{\alpha_0}\in (X^\rho)^+$, there exists $y_{\alpha_1}\in (X^\rho)^+$, $\alpha_1\in I$ with $y_{\alpha_1}\ge y_{\alpha_0}$. Let 
\begin{eqnarray}
D_0=\{u\in X^+\colon 0< i(u)\leq y_{\alpha_0} \}.\nonumber
\end{eqnarray}
Define $\{z_{\mu}\}_{\mu\in D_0}$ by $z_{\mu}:= \mu$, $\mu\in D_0$. Then by proof of Theorem \ref{preR-netconv},  $\{z_{\mu}\}_{\mu\in D_0}\subseteq X^+$ is an upward directed net which is convergent to $y_{\alpha_0}$.
Let 
\begin{eqnarray}
D_1=\{v\in X^+\colon 0< i(v)\leq y_{\alpha_1} \}.\nonumber
\end{eqnarray}
Because of $y_{\alpha_0}\le y_{\alpha_1}$, so $D_0\subseteq D_1$.
 Define $\{z_{\nu}'\}_{\nu\in D_1}$ by
 
\[z_{\nu}':= \nu, \ \nu\in D_1, \text{ and } z_{\nu}':= \mu=: z_{\mu} \text{ for all }
\nu\in D_0.\] 
So it has $z_\mu\le z'_\nu$, and also $z'_\nu\uparrow y_{\alpha_1}$.

STEP 2. In such a way, for each $\mu\in I$, let 
\[D(\mu)=\{v\in X^+\colon 0<i(v)\le y_\mu\},\]
and $z_\mu^\nu:=\nu$, $\nu\in D(\mu)$. As $\{y_\mu\}$ is upward directed, so $\{z_\mu^\nu\}_{\nu\in D(\mu)}$ is an upward directed net. Let 
\[J:=\{(\mu,\nu)\colon \mu\in I, \nu\in D(\mu)\},\]
and $z_{(\mu,\nu)}:=z_\mu^\nu=\nu$, then we have $\sup\{z_{(\mu,\nu)}\colon (\mu,\nu)\in J\}=y_{\mu}$.

It follows from STEP 1 that if $\mu_1\le \mu_2$, then $D(\mu_1)\subseteq D(\mu_2)$.
Define $\{z_{(\mu, \nu)}\}_{(\mu,\nu)\in J}$ by 
\[z_{(\mu, \nu)}:=z^\nu_\mu=\nu, \nu\in D(\mu_2), \text{ and } z^\nu_{\mu_1}=z^\nu_{\mu_2} \text{ for all } \nu\in D(\mu_1). \]
Hence for arbitrary $(\mu',\nu')\le (\mu,\nu)$, it has $\mu'\le \mu$, so $D(\mu')\subseteq D(\mu)$. Hence, it follows from  $\{z_{(\mu,\nu)}\}_\nu$ is increasing that
\[
z_{(\mu', \nu')}=z^{\nu'}_{\mu'}\le z^\nu_{\mu'}=z^\nu_\mu:=z_{(\mu, \nu)}.\]
So it has $\{z_{(\mu,\nu)}\}$ is increasing. By STEP 1, we have $z_{(\mu,\nu)}\uparrow y_\mu$ for all $\mu\in I$, hence
$\sup \{z_{(\mu,\nu)}\colon (\mu,\nu)\in J\}=\sup \{y_\mu\}$.

Thus we complete the proof.
\end{proof}

More background: For a partially ordered vector space $X$, one can construct the Dedekind completion of $X$ as well, see \cite{KalGaa2006}. We use $X^\delta$ to denote the Dedekind completion of $X$.  Then $X^\delta$ is a  Dedekind complete vector lattice and $j(X)$ is an order dense subspace of $X^\delta$, where $j\colon X\rightarrow X^\delta$ is the natural embedding. If $X$ is not Archimedean, then $X^\delta$ fails to be a vector space \cite{KalGaa2006}.
The Riesz completion $X^\rho$ is the vector sublattice of $X^\delta$ generated by $j(X)$ and therefore $X^\rho$ is Archimedean. In general, the Dedekind completion is larger than the Riesz completion, see \cite[Example 3.5]{KalGaa2006}.

Because of $X^\rho$ is a Riesz space, so it is directed. So for a downward directed net $\{y_\alpha\}$ in $X^\delta$, we use the same argument with the  Remark \ref{dow-dir-con}, there exists a downward directed net $\{z_{(\mu,\nu)}\}$ in $X^\rho$, such that $y_\alpha\le z_{(\mu,\nu)}$. Because of $X^\rho$ is order dense in $X^\delta$, so it will happen that $\inf\{y_\alpha\}=\inf\{z_{(\mu,\nu)}\}$. Again, for this $\{z_{(\mu,\nu)}\}$ in $X^\rho$, by above Remark \ref{dow-dir-con} and Lemma \ref{inf=inf}, one has another net in $X$ which is downward directed and has same infimum with $\{z_{(\mu,\nu)}\}$, and then, of course, same infimum with $\{y_\alpha\}$. Thus we have following corollary immediately.

\begin{corollary}\label{preR-con2Dedk}
Let $X$ be a pervasive Archimedean pre-Riesz space with RDP,  $X^\delta$ the Dedekind completion of $X^\rho$. Then there exists a net in $X$ which is order convergent to an element of $X^\delta$.
\end{corollary}



Now we consider the norm on a partially ordered vector space $X$.
 A  seminorm $p$ on $X$ is 
called $regular$ if for all $x\in X$ one has
\begin{eqnarray}
\label{regul-norm}
p(x):=\inf\{p(u): u\in X \mbox{ such that } -u\leq x\leq u\}.
\end{eqnarray}
 There exists a regular seminorm defined on $X^\rho$ extended $p$ given by
\begin{eqnarray}\label{regul-ext}
p_r(x):=\inf\{p(y)\colon y\in X \ \mbox{such that} -i(y)\le x\le i(y)\}, \ x\in X^\rho,
\end{eqnarray}  
which extended $p$, see  \cite{Gaa1999}.

\begin{theorem}\label{order-cont-norm}
Let $X$ be a pervasive Archimedean pre-Riesz space with RDP. If the regular seminorm $p$ on $X$ is order continuous, then  the seminorm $p_r$ defined by (\ref{regul-ext}) is order continuous on $X^\rho$ as well.
\end{theorem}

\begin{proof}
Assume that $x_\alpha\xrightarrow{o}x$ in $X^\rho$, i.e. there exists a net $\{y_\beta\}_{\beta\in B}\subset X^\rho$ such that   $\pm (x_\alpha -x)\le y_\beta\downarrow0$.
So we have $p_r(x_\alpha-x)=\inf \{p_r(y_\beta)\colon y_\beta\in X^\rho, -y_\beta\le x_\alpha -x\le y_\beta\}$.
For every $\beta\in B$, by Remark \ref{dow-dir-con} and Lemma \ref{inf=inf}, there exists a net $\{z_{(\beta,\nu)}\colon \beta\in B, \nu\in D(\beta)\}$ in $X$ with $z_{(\beta,\nu)}\ge y_\beta$ for all $\nu\in D(\beta)$, and
$\inf \{z_{(\beta,\nu)}\colon \beta\in B, \nu\in D(\beta)\}=\inf \{y_\beta\}_{\beta\in B}=0$.
Hence, it has 
\[\lim_{\beta} p_r(y_\beta)=\inf\{p_r(y_{\beta})\colon \beta\in B\}\le \inf\{p(z_{(\beta,\nu)})\colon \beta\in B, \nu\in D(\beta)\}.\] 
Because of $p$ is order continuous, so $\inf\{p(z_{(\beta,\nu)})\colon \beta\in B, \nu\in D(\beta)\}=0$. Then $p_r(x_\alpha-x)\le p_r(y_\beta)\rightarrow0$. Hence $p_r$ is order continuous on $X^\rho$.
\end{proof}

Combine Corollary \ref{preR-con2Dedk} and Theorem \ref{order-cont-norm} together, we have following corollary immediately.

\begin{corollary}\label{ore-con-nor}
Let $X$ be a pervasive Archimedean pre-Riesz space with RDP and has an order continuous regular seminorm $\|\cdot\|_X$. Let $X^\delta$ be its Dedekind  completion, and the norm extension $\|x\|_{X^\delta}$ on $X^\delta$ is defined by (\ref{regul-ext}).
Then $\left\|\cdot\right\|_{X^\delta}$ is order continuous. Furthermore, $(X, \|\cdot\|_X)$ is norm dense in $(X^\delta, \|\cdot\|_{X^\delta})$.
\end{corollary}


\subsection{Extension of compact operators}

For topological vector space $(X,\tau)$, we use $X'$ denote the topological dual of $X$.
$(Y, \|\cdot\|_{Y})$ should be a Dedekind complete Banach lattice. We will prove the compactness of extension of operators on pre-Riesz spaces.

\begin{theorem}\label{compact-ext}
Let $(X, \|\cdot\|_X)$ be a pervasive Archimedean pre-Riesz space with RDP, $\|\cdot\|_X$ is order continuous regular seminorm, and $(X^\delta, \|\cdot\|_{X^\delta})$ the Dedekind completion.
Let $Y$ be a   Banach lattice with order continuous norm. 
 If $T$ is a compact operator in $L(X,Y)$, then the unique bounded linear extension $\widehat{T}\in L(X^\delta,Y)$ 
is compact, too.
\end{theorem}
\begin{proof}
Due to Corollary \ref{ore-con-nor}, $X$ is norm dense in $X^\delta$, so $T$ extends uniquely to a bounded linear operator $\widehat{T}\in L(X^\delta,Y)$.
Let $\{x_n\}$ be a norm bounded sequence in $X^\delta$. Then it follows from norm denseness of $X$ in $X^\delta$ that there exists a norm bounded sequence $\{y_n\}$ in $X$ such that 
$\|y_n-x_n\|_{X^\delta}<\frac{1}{n}$ holds for all $n$. 
As $T$ is compact, $\{Ty_n\}$ has a convergent subsequence $\{Ty_{n_k}\}$,
so there exists $y\in Y$ with 
\[\|Ty_{n_k}-y\|_Y \rightarrow 0\]
as $k\rightarrow \infty$.
For the subsequence $\widehat{T}x_{n_k}$ of $\widehat{T}x_n$, it has
\begin{eqnarray}
\|\widehat{T}x_{n_k}-Ty_{n_k}\|_{Y}&&=\|\widehat{T}x_{n_k}-\widehat{T}y_{n_k}\|_Y\nonumber\\
&&\leq\|\widehat{T}\|\|x_{n_k}-y_{n_k}\|_{X^\delta}\nonumber\\
&&\leq\|\widehat{T}\|\frac{1}{n_k}\rightarrow 0\nonumber
\end{eqnarray}
as  $ k\rightarrow \infty$. Then
\[
\|\widehat{T}x_{n_k}-y\|_{Y}\leq\|\widehat{T}x_{n_k}-Ty_{n_k}\|_{Y}+\|Ty_{n_k}-y\|_Y\rightarrow 0\]
as  $ k\rightarrow \infty$. So $\{\widehat{T}x_n\}$ has a convergent subsequence, and hence $\widehat{T}$ is compact.
\end{proof}

\begin{theorem}
Let  $X$ be an Archimedean pervasive with RDP and has order continuous regular norm.
 Let $Y$ be a   Banach lattice with order continuous norm. 
$T$ in $L(X,Y)$ be a compact operator and
$\widehat{T}\colon X^\delta\rightarrow Y$ be sublinear and $(X^\delta)'$ has order continuous norm. 
If $T'$ is a linear operator in  $L(X,Y)$ and $T'x\leq \widehat{T}x$ for all $x\in X$, then $T'$ is compact.
\end{theorem}
\begin{proof}
By Theorem \ref{compact-ext}, $\widehat{T}$ on $X^\delta$  is sublinear and compact. 
By Hahn-Banach theorem, there exists an operator $T_0$ from $X^\delta$ to $Y$ is linear and $T_0x=T'x$ on $X$, $T_0x\leq \widehat{T}x$ on $X^\delta$.
 So $T_0$ on Banach lattice $X^\delta$ is dominated by a compact operator $\widehat{T}$. 
 As supposed $(X^\delta)'$ has order continuous norm, so $T_0$ is compact (Theorem \ref{dom-com-Rie}).

Let $B^+\subseteq X$ be a bounded set, then $T_0$ is relatively compact. So for arbitrary $\epsilon>0$, there exists a finite set $A^+\subseteq B^+$ such that $T_0(B^+)\subseteq T_0(A^+)+\epsilon U$, hence 
\[
T'(B^+)=T_0(B^+)\subseteq T_0(A^+)+\epsilon U,\]
here, $U$ is a closed unit ball of $Y$. So $T'$ is compact.
\end{proof}

For operators between ordered Banach spaces in $L(X, Y)$, even though we can use Hahn-Banach theorem extend them to  $L(X^\delta, Y)$, it is not easy to give some extension properties like \ref{dom-com-Rie}, of course $Y$ is necessary to be Dedekind complete.
However, if we just embed the range space from a partially order vector space $Y$ into its Dedekind completion, then we can use the theory in Banach lattices. Thus everything becomes reasonable for extending the Theorem \ref{dom-com-Rie} to suitable partially ordered vector spaces (in particular, pre-Riesz spaces).

Let us recall some known results.

\begin{theorem}\cite{AliBur1985}(Schauder)\label{adj-compact}
A norm bounded operator $T\colon X\rightarrow Y$ between Banach spaces is compact if and only if its adjoint $T'\colon Y'\rightarrow X'$ is likewise a compact operators.
\end{theorem}

\begin{theorem}\cite{AliBur1985}(Dodds-Fremlin)\label{Dodds-Fremlin}
Let $X$ and $Y$ be two Riesz spaces with $Y$ Dedekind complete. If $\tau$ is an order continuous locally convex solid topology of $Y$, then for each $x\in X^+$, the set 
\[B=\{T\in L_b(X, Y)\colon T[0, x] \text{ is } \tau\text{-totally bounded }\}.\]
 is a band of $L_b(X, Y)$
\end{theorem}

Let $(X,\tau)$ be a locally convex space.
\begin{theorem}\cite{AliBur1985}(Grothendieck)\label{Grothendieck}
Let $\langle X, X'\rangle$ and $\langle Y, Y'\rangle$ be a pair of dual systems. Let $\mathcal{G}$ be a full collection of $\sigma(X,X')$-bounded subsets of $X$, and $\mathcal{G}'$ be another full collection of $\sigma(Y',Y)$-bounded subsets of $Y'$. Then
for a weakly continuous operator $T\colon X\rightarrow Y$   the following statements are
equivalent:
\begin{itemize}
\item[(1)] $T(A)$ is $\mathcal{G}'$-totally bounded for each $A\in \mathcal{G}$.
\item[(2)] $T'(B)$ is $\mathcal{G}$-totally bounded for each $B\in \mathcal{G}'$.
\end{itemize}
\end{theorem}

\begin{theorem}\cite{AliBur1985}\label{normtopo-asw}
In a Banach lattice $X$ with order continuous norm, the
norm topology and $|\sigma|(X, X')$ agree on each order interval of $X$. 

In particular, in this case, $|\sigma|(X, X')$ and the norm topology have the same order bounded totally bounded sets.\end{theorem}

Let $j\colon Y\rightarrow Y^\rho$ be a bipositive linear map such that $j(Y)$ is order dense in $Y^\rho$. For a given map $S\colon X\rightarrow Y$, let $j\circ S=\widehat{S}$. Then $\widehat{S}\colon X\rightarrow Y^\rho$. 
\[
\xymatrix{
 X \ar@{->}[drr]^{\widehat{S}} \ar@{->}[rr]^S & &Y\ar@{->}[d]^{j}\\
 &  &Y^{\rho}}
\]

\begin{lemma}\label{lemma 5.1}
Let  $X$, $Y$ be two  pre-Riesz spaces and $S,T\colon X\rightarrow Y$ such that $0\le S\le T$. 
If $T$ sends a subset $A$ of $X^+$ to a norm totally bounded set, then for each $\epsilon \ge 0$ there exists some $u\in (Y^\rho)^+$ such that 
\[\|(\widehat{S}x-u)^+\|_{Y^\rho}\le \epsilon\]
holds for all $x\in A$.
\end{lemma}
\begin{proof}
Because of \[\|y\|_{Y^\rho}:=\inf\{\|x\|_Y: x\in Y, -j(x)\leq y\leq j(x)\}, \forall y\in Y^\rho.\]
It is obvious that $\|j(y)\|_{Y^\rho}\le \|y\|_Y$ for $y\in Y^+$.
Let $\epsilon>0$, there exist $x_1,\cdots, x_n\in A$ such that for all $x\in A$ we have $\|Tx-Tx_i\|_Y<\epsilon$ for some $i$.
Put $u=j\circ T(\sum_{i=1}^n x_i)\in (Y^\rho)^+$, then 
\begin{eqnarray}
0\le (\widehat{S}x-u)^+&&=j\circ (Sx-T\sum_{i=1}^n x_i)^+\nonumber\\&&
\le j\circ (Tx-T\sum_{i=1}^n x_i)^+ \nonumber\\&& \le j\circ (Tx-Tx_i)^+\nonumber\\&& \le |j\circ (Tx-Tx_i)^+|.\nonumber
\end{eqnarray}
So we have 
\[0\le \|(\widehat{S}x-u)^+\|_{Y^\rho}\le \|j\circ(Tx-Tx_i)^+\|_{Y^\rho}\le \|Tx-Tx_i\|_Y\le \epsilon.\]
Thus we complete the proof.
\end{proof}

\begin{theorem}
Let $X$ and $Y$ be pervasive Archimedean pre-Riesz spaces with RDP and  regular norms,  and $X'$ and $Y$ having order continuous regular norms. If a positive operator 
$S: X\rightarrow Y$ dominated by $T$, i.e. $0\le S\le T$, and $T$ is compact, then $S$ is compact as well.
\end{theorem}
\begin{proof}
For pre-Riesz space $Y$, there exists a bipositive linear map 
$j: Y\rightarrow Y^\rho$ such that $j(Y)$ is an order dense subspace
 of vector lattice $Y^\rho$. Let $\widehat{S}=j\circ S$, then
 $\widehat{S}: X\rightarrow Y^\rho$ satisfying
  $0\le \widehat{S}\le T$. 

  Denote  by $U$ and $V$ closed unit ball of $X$ and $Y^\rho$ respectively.
It follows from  Lemma \ref{lemma 5.1} that there exists some $u\in (Y^\rho)^+$ such that $\|(\widehat{S}x-u)^+\|_{Y^\rho}\le  \epsilon$ for all $x\in U^+$.

 As $\widehat{S}x\in Y^\rho$, we have 
 $\widehat{S}x=\widehat{S}x\wedge u+(\widehat{S}x-u)^+$, it follows that
 \[\widehat{S}(U^+)\subseteq u\wedge\widehat{S}(U^+)+\epsilon V.\]

 As $T:X\rightarrow Y$ is compact, so $T$ norm bounded, 
 combine Theorem \ref{adj-compact}, we have $T':Y'\rightarrow X'$ is likewise compact and $0\le \widehat{S}'\le T'$, where $\widehat{S}':(Y^\rho)'\rightarrow X'$.\footnote{In fact, during the proof of Theorem \ref{adj-compact}, it does not need the norm completeness of $X$ and $Y$, it only need the totally boundedness of corresponding topology. Actually, it uses the result of Theorem \ref{Grothendieck}.}
 For $X'$, we have a natural embedding $k:X'\rightarrow (X')^\delta$, here 
 $(X')^\delta$ is the Dedekind completion of $X'$. 
Then $k\circ \widehat{S}': (Y^\rho)'\rightarrow (X')^\delta$.
\[
 \xymatrix{
  X'\ar@{->}[d]^{k} & & Y'\ar@{->}[ll]_{S'} \\
  (X')^\delta  &  & (Y^\rho)' \ar@{->}[ll]_{k\circ (\widehat{S})'}\ar@{->}[u]_{j'}\ar@{->}[ull]_{(\widehat{S})'}
} 
\]
Because of $T:X\rightarrow Y\subseteq Y^\rho$, 
so $\widehat{T}':(Y^\rho)'\rightarrow X'\subseteq (X')^\delta$, so we can view $\widehat{T}'$ as an operator from $(Y^\rho)'$ into $(X')^\delta$. Let $j'\colon (Y^\rho)'\rightarrow Y'$, then $\widehat{T}'=T' \circ j'$. Because of $j'$ is continuous, $T'$ is compact, 
hence $\widehat{T}'$ is compact. Furthermore, as a same reason, $k\circ \widehat{T}'$ is compact as well.

For using Theorem \ref{Dodds-Fremlin}, we note that 
\begin{itemize}
\item[(N1)] For $k\circ \widehat{T}'\in L_b((Y^\rho)', (X')^\delta)$ and $\varphi\in (Y^\rho)'^+$, we have $k\circ \widehat{T}'([0,\varphi])$ is norm totally bounded.
\end{itemize}

As $X'$ has RDP, so $X'=(X')^\rho$, hence $X'$ is pervasive Archimedean with RDP. Because of $X'$
 has order continuous norm,  by Corollary \ref{ore-con-nor}, $(X')^\delta$ has order continuous norm as well.
Notice that note (N1), 
it follows from $0\le k\circ \widehat{S}'\le k\circ \widehat{T}'$ and Theorem \ref{Dodds-Fremlin} that $k\circ \widehat{S}'$ maps order intervals of $(Y^\rho)'$ onto norm totally bounded subsets of $(X')^\delta$. We claim that 

\begin{itemize}
\item[(C1)] $\widehat{S}'$ maps order intervals of $(Y^\rho)'$ onto norm totally bounded subsets of $X'$.
\end{itemize}

For using Theorem \ref{Grothendieck}, we note that 
\begin{itemize}
\item[(N2)]  $\mathcal{G}=\{rU:r>0\}$ is a full collection of $\sigma(X, X')$-bounded subsets of $X$.
\item[(N3)] $\mathcal{I}=\{[-\varphi, \varphi]: 0\le \varphi\in (Y^\rho)'\}$ is a full collection of $\sigma((Y^\rho)', Y^\rho)$-bounded subsets of $(Y^\rho)'$.
\end{itemize}

Therefore, applying Theorem \ref{Grothendieck}, we have $\widehat{S}(U^+)$
is a $|\sigma|(Y^\rho, (Y^\rho)')$-totally bounded set, and hence $u\wedge\widehat{S}(U^+)$
is likewise a $|\sigma|(Y^\rho, (Y^\rho)')$-totally bounded set. Since $Y$ has order continuous norm, by Theorem \ref{order-cont-norm}, then $Y^\rho$ has order continuous norm as well. Observe that $u\wedge\widehat{S}(U^+)$ is order bounded, take Theorem \ref{normtopo-asw} into 
consideration, we have $u\wedge\widehat{S}(U^+)$ is norm  totally bounded.
So $\widehat{S}(U^+)$ is a norm totally bounded set. 

Because of $\widehat{S}=j\circ S$ and, the norm of $Y$ is regular, so $j: Y\rightarrow Y^\rho$ is 
an isometry. So for $z\in Y$, $\|z\|_Y=\|i(z)\|_{Y^\rho}$. So $j^{-1}:j(Y)\rightarrow Y$ is also an isometry. So $S(U^+)$ is a norm totally bounded set as well.
\end{proof}

\begin{proof}[Proof of (N1)]
For $\varphi\in (Y^\rho)'^+$, because of the norm on $(Y^\rho)'$ is a Riesz norm, hence monotone. As $k\circ \widehat{T}'$ is compact, so $k\circ \widehat{T}'([0,\varphi])$ is relatively compact set in $(X')^\delta$. As every relatively compact set in a metric space is totally bounded, so $k\circ \widehat{T}'([0,\varphi])$ is totally bounded.
\end{proof}

\begin{proof}[Proof of (C1)]
For arbitrary $\varphi\in (Y^\rho)'^+$. Notice that $k\circ \widehat{S}'[-\varphi, \varphi]$ is norm totally bounded in $(X')^\delta$. Because of $k\colon X'\rightarrow (X')^\delta$ is isometry, so two norm topologies on $X'$ and $(X')^\delta$ are equal. So $\widehat{S}'[-\varphi, \varphi]$ is norm totally bounded in $X'$.
\end{proof}

\begin{proof}[Proof of (N2) and (N3)]
To show $rU$ is a weakly bounded subset of $X$, it is enough to show $U$ is weakly bounded. Let $N$ be a weak neighborhood of 0, then there exists $\varphi_1,\cdots,\varphi_n\in X'$ and $\alpha_1,\cdots,\alpha_n>0$ such that 
\[M:=\bigcap_{i=1}^{n}\{x\colon |\varphi_i(x)|<\alpha_i\}\subseteq N.\]
As for every $x\in U$, it has $|\varphi_i(x)|\le \|\varphi_i\|$, take $\lambda:= \max \{\frac{\|\varphi_i\|}{\alpha_i}\}$,  $i=1,\cdots, n$.
So we have
\[|\varphi_i(\frac{1}{\lambda}x)|=\frac{1}{\lambda}|\varphi_1(x)|\le \frac{1}{\lambda}\|\varphi_i\|\le \frac{\alpha_i}{\|\varphi_i\|}\|\varphi_i\|=\alpha_i.\]
So  $\frac{1}{\lambda}x\in M\subseteq N$, and then $U\subseteq \lambda N$. Hence $U$ is weakly bounded.

Because of every unit ball of $(Y^\rho)'$ is weak* compact, hence weak* bounded. So $[-\varphi, \varphi]$ for $0\le \varphi\in (Y^\rho)'$ is weak* bounded.

The definition of "full" refer to \cite{AliBur1985}. It is clear that both $\mathcal{G}$ and $\mathcal{I}$ are full with respect to its own topology.
\end{proof}

Claim:
In our proof above, we quit rely on the technique of proof of \cite[Theorem 16.20]{AliBur1985} which was established by P. G. Dodds and D. H. Fremlin in \cite{dodds1979compact}. They asked for both domain and codomain spaces of positive operator be Banach lattices. But in our proof, we only have regularization of norm on pre-Riesz space, so $(X^\rho, \|\cdot\|_{X^\rho})$ is not complete usually. In
fact, however, it is not necessary of norm completeness for P. G. Dodds and D. H. Fremlin's proof, order continuous of norms
on domain and codomain spaces is enough to make the results hold. So our result holds as well because of $(X^\rho, \|\cdot\|_{X^\rho})$ has order continuous norm.

}


\end{document}